\newtheorem{theorem}{Theorem}[section]
\newtheorem{lemma}[theorem]{Lemma}
\newtheorem{corollary}[theorem]{Corollary}
\newtheorem{definition}[theorem]{Definition}
\numberwithin{equation}{section}
\DeclareMathAlphabet{\mathcal}{OMS}{cmsy}{m}{n}
\theoremstyle{remark}
\newtheorem*{remark}{Remark}
\title{Toward optimal exponent pairs}
\author{Timothy S. Trudgian and Andrew Yang}
\date\today
\keywords{Exponent pairs, exponential sums, Riemann zeta-function, Generalised Dirichlet divisor problem}
\subjclass[2020]{Primary: 11L07, 11M06, 11T23}
\address{
    TT: School of Science, University of New South Wales (Canberra), Northcott Drive, Campbell, ACT 2600, Australia
}
\email{timothy.trudgian@unsw.edu.au}
\address{
    AY: School of Science, University of New South Wales (Canberra), Northcott Drive, Campbell, ACT 2600, Australia
}
\email{andrew.yang1@unsw.edu.au}
\begin{document}

\begin{abstract}
We quantify the set of known exponent pairs $(k, \ell)$ and develop a framework to compute the optimal exponent pair for an arbitrary objective function. Applying this methodology, we make progress on several open problems, including bounds of the Riemann zeta-function $\zeta(s)$ in the critical strip, estimates of the moments of $\zeta(1/2 + it)$ and the generalised Dirichlet divisor problem. 
\end{abstract}

\maketitle

\section{Introduction}
Many well-known problems in analytic number theory reduce to bounding exponential sums of the form
\[
\sum_{a < n \le b}e(f(n)),\qquad e(x) := e^{2\pi i x},
\]
where $n$ takes integer values, and $f(n)$ is a function possessing certain smoothness properties. For instance, exponential sum estimates have been used to bound the Riemann zeta-function $\zeta(s)$ in the critical strip, and to estimate the error term in the Dirichlet divisor problem. The deep theory of exponent pairs arose from the study of such exponential sums. Recent advances in the Bombieri--Iwaniec method and Vinogradov's mean value theorem have led to a combinatorial explosion in the number of known exponent pairs. As such, choosing the optimal exponent pair for a given application has become a highly non-trivial problem.

In this article we study the geometry of known exponent pairs and develop a method to compute numerically the optimal exponent pair for arbitrary objective functions. Our paper follows a series of earlier works \cite{phillips_zeta-function_1933, rankin_van_1955, graham_algorithm_1986, petermann_divisor_1988, Lelechenko_linear_2014} where the optimal exponent pair was computed for certain restricted classes of exponent pairs and objective functions. As an example, we apply our methodology to obtain modest improvements on several open problems in analytic number theory. 

\subsection{Exponent pairs}
In this section we review the necessary background. For an overview of this subject, we refer the reader to \cite{graham_van_1991}. Let $\textbf{F}(N, P, \sigma, y, c)$ denote the set of functions $f : I = (a, b] \subseteq [N, 2N] \to \mathbb{R}$, such that $f$ has $P$ continuous derivatives and for which
\begin{equation}\label{exp_pair_hypothesis}
\left|f^{(p + 1)}(x) - \frac{\text{d}^p}{\text{d}x^p}(yx^{-\sigma})\right| \le c\left|\frac{\text{d}^p}{\text{d}x^p}(yx^{-\sigma})\right|,
\end{equation}
for all $x \in I$, $0 \le p \le P - 1$. Informally, this condition implies $f(x) = T F(x/N)$ where $T = yN^{1 - \sigma}$ is the order of $f$, and $F(u) \approx u^{-\sigma}$ ($1 \le u \le 2$) is a ``monomial" function in the sense of \cite[Ch.\ 3]{huxley_area_1996}. Next, let $(k, \ell)\in\mathbb{R}^2$ belong to the set\footnote{The condition $k + \ell < 1$ is not included in some treatments, however its presence does not matter in practice since all ``non-trivial" exponent pairs satisfy this inequality.}
\[
\{ 0 \le k \le 1/2 \le \ell \le 1, k + \ell < 1\} \cup \{ (\tfrac{1}{2}, \tfrac{1}{2}), (0, 1) \}.
\]
Then, $(k, \ell)$ is a (one-dimensional) exponent pair if for all $\sigma > 0$, there exists some $P = P(k, \ell, \sigma)$ and $c = c(k, \ell, \sigma) < 1/2$ such that 
\begin{equation}\label{exp_pair_defn}
\sum_{n \in I}e(f(n)) \ll_{k, \ell, \sigma} \left(\frac{y}{N^\sigma}\right)^kN^{\ell},\qquad (y \ge N^\sigma),
\end{equation}
uniformly for all $f \in \textbf{F}(N, P, \sigma, y, c)$. The exponent pair conjecture asserts that $(\varepsilon, 1/2 + \varepsilon)$ is an exponent pair for any $\varepsilon > 0$. Ignoring $\varepsilon$'s, the exponent pair conjecture is akin to obtaining ``square-root" cancellation in a large family of exponential sums. Among many other consequences, the conjecture implies at once the Lindel\"of hypothesis and solves (up to $\varepsilon$) the Dirichlet divisor problem. While this conjecture appears out of reach of current methods, many exponent pairs have been discovered, which we review next. 

\subsection{The set of known exponent pairs}\label{sec:known_exp_pairs} 
In this section we attempt a complete survey of known exponent pairs. Due to the proliferation of research in this area, any such attempt is primed to fail. Nevertheless, we believe such an endeavour is worthwhile since in many applications, sub-optimal exponent pairs are chosen. Researchers are often aware of this, remarking that some further improvement is possible by better choice of exponent pair. 

First we review exponent pairs that cannot be derived from other exponent pairs using convexity arguments or transforms, such as van der Corput iteration \cite{corput_1921}. From the triangle inequality, 
\[
\sum_{n \in I}e(f(n)) \ll N
\]
and hence $(0, 1)$ is an exponent pair (known also as the trivial exponent pair). The Bombieri--Iwaniec method \cite{bombieri_order_1986, bombieri_some_1986} has been used to find a number of exponent pairs of the form 
\begin{equation}\label{BI_exp_pair}
\left(\theta + \varepsilon, \frac{1}{2} + \theta + \varepsilon\right)
\end{equation}
for any $\varepsilon > 0$. The value of $\theta$ has been successively refined to 
\[
\theta = \frac{9}{56},\quad \frac{89}{560},\quad \frac{17}{108},\quad \frac{89}{570},\quad \frac{32}{205}, \quad \frac{13}{84},
\]
by Huxley--Watt \cite{huxley_exponential_1988}, Watt \cite{watt_exponential_1989}, Huxley--Kolesnik \cite{huxley_exponential_1991}, Huxley \cite{huxley_exponential_1993}, Huxley \cite{huxley_exponential_2005} and Bourgain \cite{bourgain_decoupling_2016} respectively. The method was also be used to obtain exponent pairs other than \eqref{BI_exp_pair}, such as 
\begin{equation}\label{huxley_pair_1}
\begin{split}
&\left(\frac{2}{13} + \varepsilon, \frac{35}{52} + \varepsilon\right),\quad \left(\frac{516247}{6629696} + \varepsilon, \frac{5080955}{6629696} + \varepsilon\right),\quad\left(\frac{6299}{43860} + \varepsilon, \frac{29507}{43860} + \varepsilon\right),\\
&\quad \left(\frac{771}{8116} + \varepsilon, \frac{1499}{2029} + \varepsilon\right),\quad \left(\frac{21}{232} + \varepsilon, \frac{173}{232} + \varepsilon\right),\quad \left(\frac{1959}{21656} + \varepsilon, \frac{16135}{21656} + \varepsilon\right),
\end{split}
\end{equation}
the first three of which are by Huxley--Watt \cite{huxley_hardy_1990}, Huxley--Kolesnik \cite{huxley_exponential_2001} (see also \cite{robert_fourth_2002}) and Huxley \cite[Ch.\ 17]{huxley_area_1996} respectively, and the last three by Sargos \cite{sargos_points_1995}. Huxley \cite[Table 17.3]{huxley_area_1996} also showed
\begin{equation}\label{huxley_pair_2}
\left( \frac{169}{1424\cdot 2^m - 338} + \varepsilon,1 - \frac{169}{1424\cdot 2^{m} - 338}\frac{712m + 1577}{712} + \varepsilon\right)
\end{equation}
is an exponent pair for any $m \ge 1$. Furthermore, by combining the results of \cite{sargos_points_1995}, \cite{huxley_area_1996} and  \cite{bourgain_decoupling_2016} we show
\begin{lemma}\label{new_exponent_pairs_lem}
The following are exponent pairs for any $\varepsilon > 0$: 
\begin{equation}\label{new_exp_pairs_4}
    \begin{split}
&\quad \left(\frac{4742}{38463} + \varepsilon, \frac{35731}{51284} + \varepsilon\right),\quad \left(\frac{18}{199} + \varepsilon, \frac{593}{796} + \varepsilon\right),\\
&\left(\frac{2779}{38033} + \varepsilon, \frac{58699}{76066} + \varepsilon\right),\quad\left(\frac{715}{10238} + \varepsilon, \frac{7955}{10238} + \varepsilon\right).
    \end{split}
\end{equation}
\end{lemma}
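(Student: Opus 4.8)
The plan is to obtain each of the four pairs in \eqref{new_exp_pairs_4} not through the van der Corput $A$- and $B$-processes or convexity --- which do not appear to reach these points from the pairs \eqref{BI_exp_pair}--\eqref{huxley_pair_2} --- but by re-running the Bombieri--Iwaniec-type exponent pair constructions of Sargos \cite{sargos_points_1995} and Huxley \cite[Ch.\ 17]{huxley_area_1996} with Bourgain's decoupling input \cite{bourgain_decoupling_2016} in place of the weaker values of $\theta$ originally used. Both \cite{sargos_points_1995} and \cite{huxley_area_1996} contain propositions of the shape: \emph{if $(k,\ell)$ is an exponent pair satisfying certain explicit linear inequalities, then a certain point $(k',\ell')$, rational in $(k,\ell)$, is an exponent pair}. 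Since $\theta = 13/84$, i.e.\ the pair $(13/84+\varepsilon,\, 55/84+\varepsilon)$, is sharper than the values available when those constructions were first run, feeding it in improves the outputs accordingly.

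Concretely, I would proceed in four steps. First, restate Bourgain's result as the exponent pair $(\tfrac{13}{84}+\varepsilon,\, \tfrac12+\tfrac{13}{84}+\varepsilon)$. Second, isolate from \cite{sargos_points_1995} and \cite{huxley_area_1996} the precise transfer statement needed for each of the four targets; some of these may call for the construction applied not to Bourgain's pair directly but to an $A$- or $B$-transform of it, or to a convex combination of it with one of the pairs in \eqref{huxley_pair_1}. Third, substitute and verify that the side conditions --- the linear inequalities in $k$ and $\ell$ delimiting the admissible region of each construction, e.g.\ bounds of the form $k+\ell \le \rho$ for an explicit $\rho$ --- are still met with Bourgain's pair; this is a finite check. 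Fourth, simplify the resulting rational expressions in $13/84$ and $55/84$ to arrive at precisely the pairs displayed in \eqref{new_exp_pairs_4}.

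Two bookkeeping points: each construction depends continuously on its input near Bourgain's pair, so the $\varepsilon$ in $(13/84+\varepsilon,\, 55/84+\varepsilon)$ produces only an $O(\varepsilon)$-perturbation of the output, which the $+\varepsilon$'s in \eqref{new_exp_pairs_4} absorb (alternatively one may carry $\varepsilon$ symbolically throughout); and since a convex combination of exponent pairs is again an exponent pair, such combinations may be taken freely. The hard part will be Steps 2 and 3: the two sources are notationally dense and house several distinct constructions, so correctly matching each target to the one that produces it --- and confirming that Bourgain's pair, rather than merely the weaker pairs for which a given construction was stated, lies in its admissible region --- is where the real work sits; the algebra of Step 4, while unenlightening given the size of the denominators, is then mechanical.
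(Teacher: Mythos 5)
Your plan correctly identifies the raw material --- Bourgain's pair $(\tfrac{13}{84}+\varepsilon,\tfrac{55}{84}+\varepsilon)$ fed into Sargos \cite{sargos_points_1995} and Huxley \cite[Ch.~17]{huxley_area_1996} --- and in fact the second pair $\bigl(\tfrac{18}{199}+\varepsilon,\tfrac{593}{796}+\varepsilon\bigr)$ really is obtained just as you describe, by substituting Bourgain's pair into Sargos's Theorem~7.1. But the mechanism you propose for the other three pairs does not match what the sources actually supply, and this is a genuine gap. You assume both Sargos and Huxley ``contain propositions of the shape: if $(k,\ell)$ is an exponent pair satisfying certain linear inequalities, then $(k',\ell')$, rational in $(k,\ell)$, is an exponent pair''; Sargos has such a statement, but Huxley's Tables~17.1 and 19.2 do not. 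Those tables give bounds on the exponential sum $S$ of the form $S \ll_\varepsilon T^{\beta_0(\alpha)+\varepsilon}$ valid only over specific sub-intervals of $\alpha = \log N/\log T \in [0,1/2]$; they are not exponent pairs (which require uniformity over the full range of $\alpha$), and they are not exponent-pair transformers either. No amount of feeding Bourgain's pair into them will directly produce $\bigl(\tfrac{4742}{38463}+\varepsilon,\tfrac{35731}{51284}+\varepsilon\bigr)$ etc.

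The missing idea is a duality argument in the $(\alpha,\beta)$-plane. The paper assembles the best available piecewise-linear majorant $\beta_0(\alpha)$ of $\beta(\alpha)$ on $[0,1/2]$ from all sources (Huxley's two tables, Bourgain, Sargos, plus exponent-pair bounds), then takes the convex hull $R$ of $\{(\alpha,\beta): 0\le\alpha\le 1/2,\ 0\le\beta\le\beta_0(\alpha)\}$. Exponent pairs then correspond to non-trivial supporting lines of $R$: a line $\beta = k + (\ell-k)\alpha$ lying above $R$ on $[0,1/2]$ certifies $(k+\varepsilon,\ell+\varepsilon)$ as an exponent pair. Three of the four new pairs are read off as supporting lines through consecutive vertices of $R$ --- where the two vertices of a single edge can come from entirely different source estimates, which is precisely what makes the pairs new. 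A second, technical, piece you would also need is the uniformity check: several of Huxley's theorems are stated under hypotheses on $F$, $F'',\dots$ that differ from the exponent-pair class $\mathbf{F}(N,P,\sigma,y,c)$ in \eqref{exp_pair_hypothesis}, and the paper devotes several subsections to verifying that, for $P$ large enough and $c$ small enough, these hypotheses are indeed satisfied uniformly over the class. Your plan's Step~3 (``verify that the side conditions are still met with Bourgain's pair'') gestures in this direction but conflates checking that a point lies in an admissible region of $(k,\ell)$-space with this quite different verification about the phase functions $f$.
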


Many of the above results apply to a broader class of phase functions $f(x)$ than required for the definition of exponent pairs in \eqref{exp_pair_hypothesis}. Another type of result, assuming even weaker hypotheses, are the $m$th derivative tests, which imply exponent pairs of the form
\begin{equation}\label{kth_deriv_test_exp_pair}
\left(\vartheta + \varepsilon, 1 - (m - 1)\vartheta + \varepsilon\right),
\end{equation}
for some integer $m \ge 3$, $0 < \vartheta < 1/2$ and any $\varepsilon > 0$. Instead of requiring control of the first $P$ derivatives of a function, such results typically only require control of the $m$th derivative. Table \ref{exppair_table} shows some admissible choices of $m$ and $\vartheta$. 

\begin{table}[h]
\def\arraystretch{1.3}
\centering
\caption{Exponent pairs of the form \eqref{kth_deriv_test_exp_pair}.}
\begin{tabular}{|c|c|}
\hline
$m = 4$ & $\vartheta = 1/13$ \cite{robert_fourth_2002}\\
\hline
$m = 8$ & $\vartheta = 1/204$ \cite[Thm.\ 3]{sargos_analog_2003}\\
\hline
$m = 9$ & $\vartheta = 7/2640$ \cite[Thm.\ 4]{sargos_analog_2003}, $\vartheta = 1/360$ \cite{robert_analogue_2002}\\
\hline
$m = 10$ & \begin{tabular}{@{}c@{}}
$\vartheta = 1/716$ \cite{sargos_analog_2003}, $\vartheta = 1/649$ \cite{robert_applications_2001}, \\$\vartheta = 7/4540$ \cite{robert_analogue_2002}, $\vartheta = 1/615$ \cite{robert_quelques_2002}
\end{tabular}\\
\hline
$m = 11$ & $\vartheta = 1/915$ \cite{robert_quelques_2002}\\
\hline
\end{tabular}
\label{exppair_table}
\end{table}

Yet another class of exponent pairs may be derived from estimates of Vinogradov's mean value integral. Heath-Brown \cite[(6.17.4)]{titchmarsh_theory_1986} showed that 
\begin{equation}\label{hb_exponent_pair1}
(a_m, b_m) = \left(\frac{1}{25m^2(m - 2)\log m}, 1 - \frac{1}{25m^2\log m}\right)
\end{equation}
is an exponent pair for all $m \ge 3$. Incorporating the essentially optimal estimates of Vinogradov's integral in 
 \cite{wooley_cubic_2016, bourgain_proof_2016}, Heath-Brown \cite{heathbrown_new_2017} showed that 
\begin{equation}\label{hb_exponent_pair}
(p_m, q_m) = \left(\frac{2}{(m - 1)^2(m + 2)},1 - \frac{3m - 2}{m(m - 1)(m + 2)} + \varepsilon\right)
\end{equation}
is an exponent pair for all integers $m \ge 3$ and any $\varepsilon > 0$.

All other known exponent pairs can be generated from the above primitive exponent pairs using convexity and transformations of existing exponent pairs. Rankin \cite{rankin_van_1955} first observed that the set of exponent pairs is convex --- that is, if $(k_1, \ell_1)$ and $(k_2, \ell_2)$ are exponent pairs, then so is 
\[
\left(\lambda k_1 + (1 - \lambda)k_2, \lambda \ell_1 + (1 - \lambda)\ell_2\right),
\]
for any $\lambda \in [0, 1]$. The van der Corput method can be used to generate more exponent pairs by transforming an exponential sum into simpler sums via two processes, known as $A$ and $B$, both of which have versions in any number of dimensions. Here we review the one-dimensional case only. The $A$ process, also known as Weyl-differencing, expresses an exponential sum in terms of another exponential sum whose phase function is easier to control. By applying the $A$ process, if $(k, \ell)$ is an exponent pair, then so is 
\begin{equation}\label{A_process_defn}
A(k, \ell) := \left(\frac{k}{2k + 2}, \frac{\ell}{2k + 2} + \frac{1}{2}\right).
\end{equation}
The $B$ process, also known as Poisson summation, expresses an exponential sum in terms of another exponential sum that is typically shorter. By applying the $B$ process, if $(k, \ell)$ is an exponent pair, then so is
\begin{equation}\label{B_process_defn}
B(k, \ell) := \left(\ell - \frac{1}{2}, k + \frac{1}{2}\right).
\end{equation}
Other types of transformations are also known. For example, Sargos \cite[Thm.\ 5]{sargos_analog_2003} showed that if $(k, \ell + \varepsilon)$ is an exponent pair (for sufficiently small $\varepsilon > 0$), then so is 
\begin{equation}\label{C_process_defn}
C(k, \ell + \varepsilon) := \left(\frac{k}{12(1 + 4k)}, \frac{11(1 + 4k) + \ell}{12(1 + 4k)} + \varepsilon\right).
\end{equation}

\subsection{Finding optimal exponent pairs} The task of finding the optimal exponent pair for a given problem is highly non-trivial. Some partial results are known for special objective functions and for certain subsets of known exponent pairs. Rankin \cite{rankin_van_1955} built on the work of Phillips \cite{phillips_zeta-function_1933} to find the best exponent pair obtainable from $(0, 1)$ and van der Corput iteration, for the objective function $F(k, \ell) = k + \ell$. This objective function was interesting due to its application of bounding the Riemann zeta-function on the critical line. Later, Graham \cite{graham_algorithm_1986} developed an algorithm for objective functions of the form 
\begin{equation}\label{graham_objfunc}
F(k, \ell) = \frac{ak + b\ell + c}{dk + e\ell + f}.
\end{equation}
Petermann \cite{petermann_divisor_1988} expanded this analysis to also include the Bombieri--Iwaniec exponent pair $(9/56 + \varepsilon, 37/56 + \varepsilon)$. This was also the first analysis to consider exponent pairs of the form \eqref{BI_exp_pair}. More recently, Lelechenko \cite{Lelechenko_linear_2014} provided an analysis for objective functions of the form $\max\{F_1(k, \ell), \ldots, F_n(k, \ell)\}$, where each $F_i$ is of the form \eqref{graham_objfunc}. The class of exponent pairs was also expanded to include \eqref{BI_exp_pair} for various $\theta \ge 32/205$. Very recently, Cassaigne, Drappeau and Ramar\'{e} \cite{cassaigne_notes_2023} studied the geometry of the set of exponent pairs generated by \eqref{BI_exp_pair} with $\theta \ge 89/560$ and van der Corput iteration. 

Notably missing from existing analysis are considerations for:
\begin{enumerate}
    \item exponent pairs of the form \eqref{huxley_pair_1},  \eqref{huxley_pair_2}, \eqref{kth_deriv_test_exp_pair}, \eqref{hb_exponent_pair1} and \eqref{hb_exponent_pair}, 
    \item exponent pairs that can be obtained from process $C$, defined in \eqref{C_process_defn},
    \item exponent pairs that can be obtained from convexity arguments,
    \item general objective functions $F(k, \ell)$. 
\end{enumerate}
The goal of this work is to characterise completely the set of currently-known one-dimensional exponent pairs, and to provide a method to consider arbitrary objective functions $F$. In particular, we show that all known exponent pairs lie in a certain convex hull. The task of finding favourable exponent pairs thus reduces to a constrained optimisation problem which can be easily solved using a numerical optimisation package. We then demonstrate this approach on a range of open problems. 

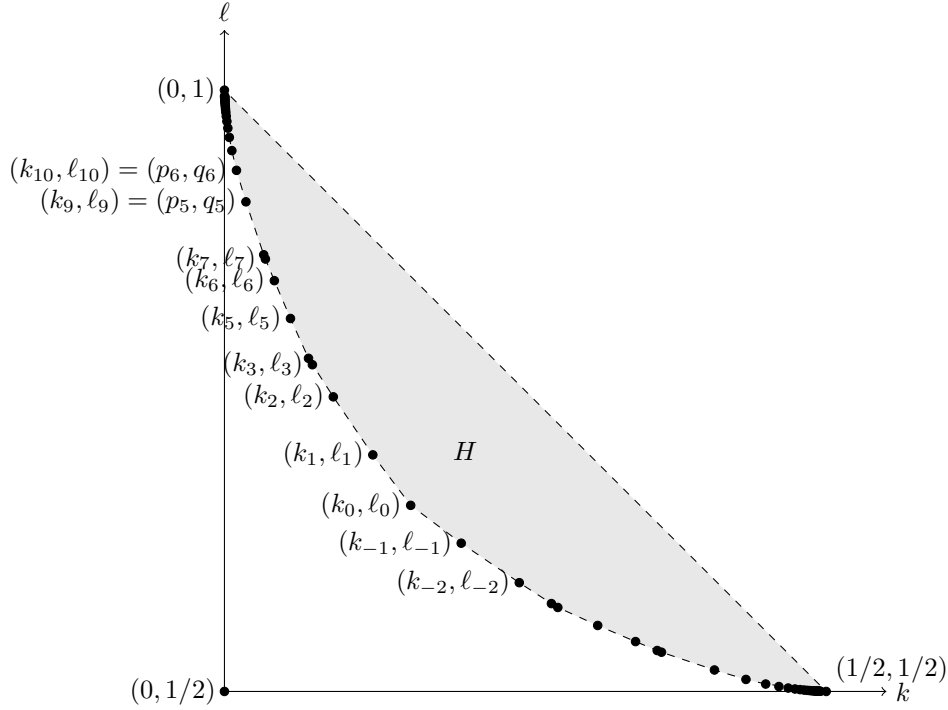
\begin{figure}
\centering
\begin{tikzpicture}[scale=16]
\draw[->, thin] (0, 0) -> (0, 0.55) node[anchor=south]{$\ell$};
\draw[->, thin] (0, 0) -> (0.55, 0) node[anchor=west]{$k$};

\draw[black, dashed, fill={rgb:black,1;white,10}] 
(85/189, 1/162) -- 
(13/30, 1/100) -- 
(57/140, 1/56) -- 
(0.3631425, 0.0326394) -- 
(0.3595694, 0.0340463) -- 
(0.3415898, 0.0414746) -- 
(0.3101290, 0.0548779) -- 
(0.2770072, 0.0698378) -- 
(0.2716851, 0.0730681) -- 
(0.2449748, 0.0904522) -- 
(0.1967280, 0.1232873) -- 
(13/84 , 13/84) -- 
(0.1232873, 0.1967280) -- 
(0.0904522, 0.2449748) -- 
(0.0698378, 0.2770072) -- 
(0.0548779, 0.3101290) -- 
(0.0414746, 0.3415898) -- 
(0.0340463, 0.3595694) -- 
(0.0326394, 0.3631425) -- 
(1/56, 57/140) --
(1/100, 13/30) --
(1/162, 85/189) --
(1/245, 129/280) --
(1/352, 371/792) --
(1/486, 64/135) --
(0, 1/2) -- (1/2, 0) -- cycle;

\filldraw[black] (129/280, 1/245) circle (0.1pt);
\filldraw[black] (371/792, 1/352) circle (0.1pt);
\filldraw[black] (64/135, 1/486) circle (0.1pt);
\filldraw[black] (342/715, 1/650) circle (0.1pt);
\filldraw[black] (445/924, 1/847) circle (0.1pt);
\filldraw[black] (1133/2340, 1/1080) circle (0.1pt);
\filldraw[black] (177/364, 1/1352) circle (0.1pt);
\filldraw[black] (871/1785, 1/1666) circle (0.1pt);
\filldraw[black] (1057/2160, 1/2025) circle (0.1pt);
\filldraw[black] (0.4905185758513932,  0.00041118421052631577) circle (0.1pt);
\filldraw[black] (0.4915032679738562,  0.00034602076124567473) circle (0.1pt);
\filldraw[black] (0.4923419660261765,  0.00029394473838918284) circle (0.1pt);
\filldraw[black] (0.4930622009569378, 0.0002518257365902795) circle (0.1pt);
\filldraw[black] (0.4936853002070393, 0.0002173913043478261) circle (0.1pt);
\filldraw[black] (0.49422799422799424, 0.0001889644746787604) circle (0.1pt);
\filldraw[black] (0.49470355731225296, 0.00016528925619834712) circle (0.1pt);
\filldraw[black] (0.4951226309921962, 0.00014541224371092046) circle (0.1pt);

\filldraw[black] (85/189, 1/162) circle (0.1pt);
\filldraw[black] (13/30, 1/100) circle (0.1pt);
\filldraw[black] (57/140, 1/56) circle (0.1pt);
\filldraw[black] (0.3631425, 0.0326394) circle (0.1pt);
\filldraw[black] (0.3595694, 0.0340463) circle (0.1pt);
\filldraw[black] (0.3415898, 0.0414746) circle (0.1pt);
\filldraw[black] (0.3101290, 0.0548779) circle (0.1pt);
\filldraw[black] (0.2770072, 0.0698378) circle (0.1pt);
\filldraw[black] (0.2716851, 0.0730681) circle (0.1pt);
\filldraw[black] (0.2449748, 0.0904522) circle (0.1pt) node[anchor=east]{$(k_{-2}, \ell_{-2})$};
\filldraw[black] (0.1967280, 0.1232873) circle (0.1pt) node[anchor=east]{$(k_{-1}, \ell_{-1})$};

\filldraw[black] (13/84, 13/84) circle (0.1pt) node[anchor=east]{$(k_0, \ell_0)$};

\filldraw[black] (0.1232873, 0.1967280) circle (0.1pt) node[anchor=east]{$(k_1, \ell_1)$};
\filldraw[black] (0.0904522, 0.2449748) circle (0.1pt) node[anchor=east]{$(k_2, \ell_2)$};
\filldraw[black] (0.0730681, 0.2716851) circle (0.1pt) node[anchor=east]{$(k_3, \ell_3)$};
\filldraw[black] (0.0698378, 0.2770072) circle (0.1pt);
\filldraw[black] (0.0548779, 0.3101290) circle (0.1pt) node[anchor=east]{$(k_5, \ell_5)$};
\filldraw[black] (0.0414746, 0.3415898) circle (0.1pt) node[anchor=east]{$(k_6, \ell_6)$};
\filldraw[black] (0.0340463, 0.3595694) circle (0.1pt) node[anchor=east]{$(k_7, \ell_7)$};
\filldraw[black] (0.0326394, 0.3631425) circle (0.1pt);
\filldraw[black] (1/56, 57/140) circle (0.1pt) node[anchor=east]{$(k_9, \ell_9) = (p_5, q_5)$};
\filldraw[black] (1/100, 13/30) circle (0.1pt) node[anchor=east]{$(k_{10}, \ell_{10}) = (p_6, q_6)$};
\filldraw[black] (1/162, 85/189) circle (0.1pt);
\filldraw[black] (1/245, 129/280) circle (0.1pt);
\filldraw[black] (1/352, 371/792) circle (0.1pt);
\filldraw[black] (1/486, 64/135) circle (0.1pt);
\filldraw[black] (1/650, 342/715) circle (0.1pt);
\filldraw[black] (1/847, 445/924) circle (0.1pt);
\filldraw[black] (1/1080, 1133/2340) circle (0.1pt);
\filldraw[black] (1/1352, 177/364) circle (0.1pt);
\filldraw[black] (1/1666, 871/1785) circle (0.1pt);
\filldraw[black] (1/2025, 1057/2160) circle (0.1pt);
\filldraw[black] ( 0.00041118421052631577 , 0.4905185758513932 ) circle (0.1pt);
\filldraw[black] ( 0.00034602076124567473 , 0.4915032679738562 ) circle (0.1pt);
\filldraw[black] ( 0.00029394473838918284 , 0.4923419660261765 ) circle (0.1pt);
\filldraw[black] ( 0.0002518257365902795 , 0.4930622009569378 ) circle (0.1pt);
\filldraw[black] ( 0.0002173913043478261 , 0.4936853002070393 ) circle (0.1pt);
\filldraw[black] ( 0.0001889644746787604 , 0.49422799422799424 ) circle (0.1pt);
\filldraw[black] ( 0.00016528925619834712 , 0.49470355731225296 ) circle (0.1pt);
\filldraw[black] ( 0.00014541224371092046 , 0.4951226309921962 ) circle (0.1pt);

\filldraw[black] (0, 1/2) circle (0.1pt) node[anchor=east]{$(0, 1)$};
\filldraw[black] (1/2, 0) circle (0.1pt) node[anchor=south west]{$(1/2, 1/2)$};
\filldraw[black] (0, 0) circle (0.1pt) node[anchor=east]{$(0, 1/2)$};

\node at (0.2, 0.2) {$H$};
\end{tikzpicture}
\caption{Plot of $H$}
\label{H_theta_plot}
\end{figure}

\subsection{The convex hull $H$}\label{convex_hull_sec}
In this section we explicitly compute the convex hull containing all exponent pairs reviewed in the previous section. For all integers $n$, define the point $(k_n, \ell_n)$ as
\begin{align}
(k_0, \ell_0) &:= \left(\frac{13}{84}, \frac{55}{84}\right),\quad (k_1, \ell_1) := \left(\frac{4742}{38463}, \frac{35731}{51284}\right),\quad (k_2, \ell_2) := \left(\frac{18}{199}, \frac{593}{796}\right),\notag\\ 
&(k_3, \ell_3) := \left(\frac{2779}{38033}, \frac{58699}{76066}\right),\quad (k_4, \ell_4) := \left(\frac{715}{10238}, \frac{7955}{10238}\right)\label{knln_defn}
\end{align}
and 
\[
(k_n, \ell_n) := \begin{cases}
    A(k_{n - 4}, \ell_{n - 4}),&5 \le n \le 8,\\
    (p_{n - 4}, q_{n - 4}),&n \ge 9,\\
    B(k_{-n}, \ell_{-n}),&n < 0.
\end{cases}
\]
Ignoring $\varepsilon$'s, $(k_0, \ell_0)$ comes from Bourgain's \cite{bourgain_decoupling_2016} exponent pair, $(k_1, \ell_1), \ldots, (k_4, \ell_4)$ come from Lemma \ref{new_exponent_pairs_lem} and $(p_{m}, q_m)$ comes from  the series of exponent pairs defined in \eqref{hb_exponent_pair}.

Note that this set of points is symmetric about the line $\ell = k + 1/2$, and that 
\[
\lim_{n \to \infty}(k_n, \ell_n) = (0, 1),\qquad \lim_{n \to -\infty}(k_n, \ell_n) = (1/2, 1/2).
\]
Then, we define $H$ in the following way. 
\begin{definition}[The convex hull $H$]\label{convex_hull_H_defn}
Let $H \subset \mathbb{R}^2$ be the set of points enclosed by the (infinite edged) polygon formed by line segments joining $(k_n, \ell_n)$ and $(k_{n + 1}, \ell_{n + 1})$ for all integers $n$, combined with the line segment joining $(0, 1)$ and $(1/2, 1/2)$. 
\end{definition}

Figure \ref{H_theta_plot} shows a graph of $H$. Note that the vertices $(k_n, \ell_n)$ for $|n| \ge 9 $ correspond to elements of the series \eqref{hb_exponent_pair}. Indeed \eqref{hb_exponent_pair} form the best-known exponent pairs close to the extremities $(0, 1)$ and $(1/2, 1/2)$. Figure \ref{hb_exp_pair_plot} records a comparison of \eqref{hb_exponent_pair} and a series of exponent pairs of the form $A^p(\frac{13}{84} + \varepsilon, \frac{55}{84} + \varepsilon)$.

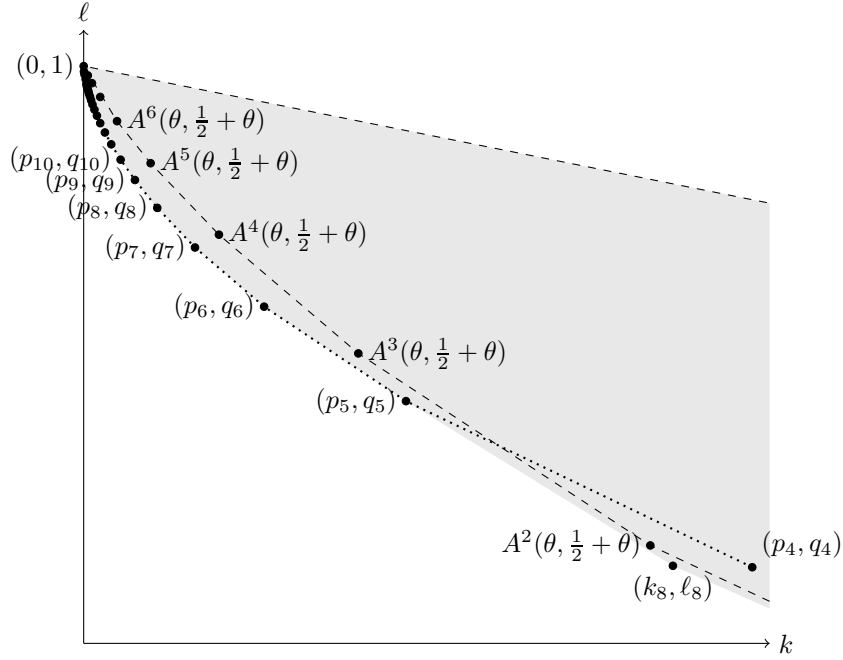
\begin{figure}
\centering
\begin{tikzpicture}[x=240cm,y=48cm]
\draw[->, thin] (0, 0.34) -> (0, 0.51) node[anchor=south]{$\ell$};
\draw[->, thin] (0, 0.34) -> (0.038, 0.34) node[anchor=west]{$k$};

\fill[fill={rgb:black,1;white,10}] 
( 0.038 , 0.351649 - 0.002) -- 
(0.0326394, 0.3615) -- 
(1/56, 57/140) -- 
(1/100, 13/30) -- 
(1/162, 85/189) -- 
(1/245, 129/280) -- 
(1/352, 371/792) -- 
(1/486, 64/135) -- 
(1/650, 342/715) -- 
(1/847, 445/924) -- 
(1/1080, 1133/2340) -- 
(1/1352, 177/364) -- 
(1/1666, 871/1785) -- 
(1/2025, 1057/2160) -- 
( 0.00041118421052631577 , 0.4905185758513932 ) -- 
( 0.00002 , 0.4985 ) -- 
(0, 1/2) -- (0.038, 1/2 - 0.038) -- cycle;

\draw[black, dashed] 
( 0.038 , 0.351649 ) --
( 13/414 , 76/207 ) -- 
( 13/854 , 359/854 ) -- 
( 13/1734 , 131/289 ) -- 
( 13/3494 , 1653/3494 ) -- 
( 13/7014 , 1700/3507 ) -- 
(0, 1/2) -- (0.038, 1/2 - 0.038);

\filldraw[black] (13/414 , 76/207) circle (0.05cm) node[anchor=east]{$A^2(\theta, \frac{1}{2} + \theta)$};
\filldraw[black] (13/854 , 359/854) circle (0.05cm) node[anchor=west]{$A^3(\theta, \frac{1}{2} + \theta)$};
\filldraw[black] (13/1734 , 131/289) circle (0.05cm) node[anchor=west]{$A^4(\theta, \frac{1}{2} + \theta)$};
\filldraw[black] (13/3494 , 1653/3494) circle (0.05cm) node[anchor=west]{$A^5(\theta, \frac{1}{2} + \theta)$};
\filldraw[black] (13/7014 , 1700/3507) circle (0.05cm) node[anchor=west]{$A^6(\theta, \frac{1}{2} + \theta)$};
\filldraw[black] (13/14054, 6907/14054) circle (0.05cm);
\filldraw[black] (0.000462074358, 6967/14067) circle (0.05cm);
\filldraw[black] (0.000230930472, 0.497406473158) circle (0.05cm); 
\filldraw[black] (0, 1/2) circle (0.05cm) node[anchor=east]{$(0, 1)$};

\draw[black, dotted, thick]
(1/27, 13/36) --
(1/56, 57/140) --
(1/100, 13/30) --
(1/162, 85/189) --
(1/245, 129/280) --
(1/352, 371/792) --
(1/486, 64/135) --
(1/650, 342/715) --
(1/847, 445/924) --
(1/1080, 1133/2340) --
(1/1352, 177/364) --
(1/1666, 871/1785) --
(1/2025, 1057/2160) --
(0, 1/2);

\filldraw[black] (0.0326394, 0.3615) circle (0.05cm) node[anchor=north]{$(k_8, \ell_8)$};
\filldraw[black] (1/27, 13/36) circle (0.05cm) node[anchor=south west]{$(p_4, q_4)$};
\filldraw[black] (1/56, 57/140) circle (0.05cm) node[anchor=east]{$(p_5, q_5)$};
\filldraw[black] (1/100, 13/30) circle (0.05cm) node[anchor=east]{$(p_6, q_6)$};
\filldraw[black] (1/162, 85/189) circle (0.05cm) node[anchor=east]{$(p_7, q_7)$};
\filldraw[black] (1/245, 129/280) circle (0.05cm) node[anchor=east]{$(p_8, q_8)$};
\filldraw[black] (1/352, 371/792) circle (0.05cm) node[anchor=east]{$(p_9, q_9)$};
\filldraw[black] (1/486, 64/135) circle (0.05cm) node[anchor=east]{$(p_{10}, q_{10})$};
\filldraw[black] (1/650, 342/715) circle (0.05cm);
\filldraw[black] (1/847, 445/924) circle (0.05cm);
\filldraw[black] (1/1080, 1133/2340) circle (0.05cm);
\filldraw[black] (1/1352, 177/364) circle (0.05cm);
\filldraw[black] (1/1666, 871/1785) circle (0.05cm);
\filldraw[black] (1/2025, 1057/2160) circle (0.05cm);
\filldraw[black] ( 0.00041118421052631577 , 0.4905185758513932 ) circle (0.05cm);
\filldraw[black] ( 0.00034602076124567473 , 0.4915032679738562 ) circle (0.05cm);
\filldraw[black] ( 0.00029394473838918284 , 0.4923419660261765 ) circle (0.05cm);
\filldraw[black] ( 0.0002518257365902795 , 0.4930622009569378 ) circle (0.05cm);
\filldraw[black] ( 0.0002173913043478261 , 0.4936853002070393 ) circle (0.05cm);
\filldraw[black] ( 0.0001889644746787604 , 0.49422799422799424 ) circle (0.05cm);
\filldraw[black] ( 0.00016528925619834712 , 0.49470355731225296 ) circle (0.05cm);
\filldraw[black] ( 0.00014541224371092046 , 0.4951226309921962 ) circle (0.05cm);

\filldraw[black] ( 0.0001 , 0.4963844797178131 ) circle (0.05cm);
\filldraw[black] ( 0.00005 , 0.49766068589598 ) circle (0.05cm);
\filldraw[black] ( 0.00002 , 0.4985 ) circle (0.05cm);

\end{tikzpicture}
\caption{Plot of the series of exponent pairs of the form $(p_m, q_m)$ and $A^p(\theta, \frac{1}{2} + \theta)$ with $\theta = 13/84$ respectively. $H$ is shaded in grey.}
\label{hb_exp_pair_plot}
\end{figure}

We record the following properties of $H$: 
\begin{enumerate}
    \item $H$ is convex (proved in Lemma \ref{h_theta_convex_lem} below).
    \item $H$ is symmetric about $\ell = k + 1/2$, and hence closed under the $B$ transformation (defined in \eqref{B_process_defn}). That is, if $(k, \ell) \in H$ then $B(k, \ell) \in H$. 
    \item $H$ is closed under both $A$ and $C$ transformations, defined in \eqref{A_process_defn} and \eqref{C_process_defn} respectively (proved in Lemma \ref{H_closed_A_lem} below). 
    \item $H$ contains all exponent pairs reviewed in \S \ref{sec:known_exp_pairs} (proved in Lemma \ref{H_contains_lem} below). 
\end{enumerate}
In light of the above observations, we conclude that 
\begin{theorem}\label{reg_thm_1}
If $I$ is the interior of $H$, then 
\[
I \cup \{(0, 1), (\tfrac{1}{2}, \tfrac{1}{2})\}
\]
is the set of all known one-dimensional exponent pairs.
\end{theorem}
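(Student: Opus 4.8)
Let $\mathcal{E}$ denote the set of known one-dimensional exponent pairs: the smallest subset of the admissible region $U:=\{(k,\ell):0\le k\le\tfrac12\le\ell\le1,\ k+\ell<1\}\cup\{(0,1),(\tfrac12,\tfrac12)\}$ containing the primitive pairs of \S\ref{sec:known_exp_pairs} and closed under taking convex combinations and under $A$, $B$, $C$, in each case discarding a result that falls outside $U$. We must show $\mathcal{E}=I\cup\{(0,1),(\tfrac12,\tfrac12)\}$. Throughout I will use that $I$ is the interior of the closed convex hull of $V:=\{(k_n,\ell_n):n\in\mathbb{Z}\}\cup\{(0,1),(\tfrac12,\tfrac12)\}$, and that $I\subseteq U$.

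The plan for $\mathcal{E}\subseteq I\cup\{(0,1),(\tfrac12,\tfrac12)\}$ is to check that the right-hand side contains every primitive and is closed under the four operations; the inclusion then follows by minimality of $\mathcal{E}$. Closure is routine once one records: (i) $A,B,C$ have non-vanishing Jacobian, so by properties (2)--(3) of $H$ they map $I$ into $I$, and they send each corner to a corner or to a point outside $U$; and (ii) if $p\in I$ and $q\in H$ then $\lambda p+(1-\lambda)q\in I$ for $\lambda\in(0,1]$, so any convex combination of points of $I\cup\{(0,1),(\tfrac12,\tfrac12)\}$ that lies in $U$ is either a corner or lies in $I$. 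To see that $I\cup\{(0,1),(\tfrac12,\tfrac12)\}$ contains the primitives: $(0,1)$ is a corner; by property (4) every other primitive lies in $H$, and a finite check (using the explicit vertices of $H$) shows each lies in $I$. The key point is that a primitive of the shape $(\alpha+\varepsilon,\beta+\varepsilon)$ or $(\alpha,\beta+\varepsilon)$ with $\varepsilon>0$ is its $\varepsilon=0$ value shifted in direction $(1,1)$ or $(0,1)$, and these directions point strictly into $H$ from every point of the lower boundary arc of $H$ --- each strictly increases $k+\ell$, the arc is convex with edges of negative slope, and $H$ lies on the larger-$k+\ell$ side of the arc --- so such a primitive lands in $I$ as long as it lies in $U$; the few primitives with no such shift, such as $(a_m,b_m)$ for $m\ge4$ in \eqref{hb_exponent_pair1}, lie strictly above the best-pair arc, hence already in $I$. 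The lone anomalous instance, $m=3$ in \eqref{hb_exponent_pair1}, has $k+\ell=1$ with $(k,\ell)\notin\{(0,1),(\tfrac12,\tfrac12)\}$, hence is not a point of $U$ and need not be accounted for.

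For the reverse inclusion it suffices to prove $I\subseteq\mathcal{E}$, since $(0,1)$ is the trivial pair and $(\tfrac12,\tfrac12)=B(0,1)\in\mathcal{E}$. Fix $p\in I$. As the vertices $(k_n,\ell_n)$ accumulate only at the two corners, which lie in $\mathcal{E}$, the truncated hulls $H_N:=\operatorname{conv}(\{(k_n,\ell_n):|n|\le N\}\cup\{(0,1),(\tfrac12,\tfrac12)\})$ have interiors forming an increasing sequence of open sets with union $I$ (the region $H\setminus H_N$ is a pair of thin slivers between the arc-tails and the chords from $(k_{\pm N},\ell_{\pm N})$ to the corners, shrinking to the corners as $N\to\infty$), so $p\in\operatorname{int}(H_N)$ for some $N$. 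Each of the finitely many vertices of $H_N$ has points of $\mathcal{E}$ arbitrarily near it --- $(k_n+\varepsilon,\ell_n+\varepsilon)$ for $0\le n\le4$, its $A$-image for $5\le n\le8$, the Heath--Brown pair $(p_{n-4},q_{n-4})$ (which already carries a $+\varepsilon$) for $n\ge9$, and the $B$-image of one of these for $n<0$ --- so replacing the vertices of $H_N$ by such $\mathcal{E}$-points while fixing the corners yields, for small enough $\varepsilon>0$, a polygon still containing $p$ in its interior. Writing $p$ as a convex combination of at most three of its vertices (Carath\'eodory), at least one a non-corner (else $p\in\{k+\ell=1\}$), and folding the at most two corner terms in one at a time, every partial combination has $k+\ell<1$, hence lies in $U$; repeated use of Rankin's convexity then gives $p\in\mathcal{E}$.

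The hard part, I expect, is the bookkeeping between the $\varepsilon$'s and the geometry of $H$: the vertices $(k_n,\ell_n)$ are limiting exponent pairs, not genuine ones, so $\mathcal{E}$ is not literally the convex hull of its vertices, and one must check both that every interior point of $H$ is truly attained as a convex combination of honest $\varepsilon$-perturbed pairs and that the constraint $k+\ell<1$ discards every degenerate pair with $k+\ell=1$ other than the two corners. The remaining ingredients --- non-vanishing Jacobians of $A,B,C$, the finite case check for the primitives, and the point-set topology of the countably many vertices accumulating at the corners --- are straightforward.
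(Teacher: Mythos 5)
Your proof is correct and is genuinely more careful than the paper's own treatment. The paper establishes the same geometric ingredients---convexity of $H$ (Lemma~\ref{h_theta_convex_lem}), closure under $A$, $B$, $C$ (Lemmas~\ref{quasiconvexity_lem}--\ref{C_transform_lem}), and containment of all primitives (Lemma~\ref{H_contains_lem})---but then asserts Theorem~\ref{reg_thm_1} in a single sentence (``In light of the above observations, we conclude\ldots''), leaving the reverse inclusion $I\subseteq\mathcal{E}$ essentially unaddressed and never confronting the fact that the vertices $(k_n,\ell_n)$ of $H$ are only limits of exponent pairs, not exponent pairs themselves. You supply both missing pieces. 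For the forward inclusion, your observation that the shift directions $(1,1)$ and $(0,1)$ point strictly into $H$ from every point of the lower boundary arc (each strictly increases $k+\ell$, the arc is convex with edges of negative slope) is exactly the $\varepsilon$-bookkeeping the paper elides, and shows the $\varepsilon$-perturbed primitives land in $I$, not merely in $H$. For the reverse inclusion, the truncation-to-$H_N$, vertex-perturbation, Carath\'eodory, corner-folding argument is precisely what is needed, and folding in the two corners one at a time while preserving $k+\ell<1$ is the correct way to avoid the degenerate combinations lying on the segment $k+\ell=1$. Your aside about $(a_3,b_3)$ is sharp: with \eqref{hb_exponent_pair1} as printed, $a_3+b_3=1$ so $(a_3,b_3)\notin U$ as you say; note, though, that Program~1 in \S\ref{verify_program} uses $25m^2(m-1)\log m$ rather than $25m^2(m-2)\log m$ for the first coordinate, under which $a_3+b_3<1$ strictly and $m=3$ is not anomalous---an internal inconsistency in the paper, not a flaw in your argument, and either reading is absorbed by your case analysis.
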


In particular, $I$ contains
\begin{enumerate}
    \item all classical exponent pairs formed by van der Corput iteration and the trivial pair, such as
    \[
    (\tfrac{1}{6}, \tfrac{2}{3}) = AB(0, 1),\qquad (\tfrac{11}{82}, \tfrac{57}{82}) = ABA^3B(0, 1),
    \]
    \item all transformations of sporadic exponent pairs of the form \eqref{BI_exp_pair}, such as 
    \[
    \left(\tfrac{13}{414} + \varepsilon, \tfrac{359}{414} + \varepsilon\right) = A^2\left(\tfrac{13}{84} + \varepsilon, \tfrac{55}{84} + \varepsilon\right),
    \]
    \item all known exponent pairs of the form \eqref{huxley_pair_2} --- \eqref{hb_exponent_pair}, such as
    \[
    \left(\tfrac{1}{27}, \tfrac{13}{36} + \varepsilon\right),\qquad \left(\tfrac{1}{56}, \tfrac{57}{140} + \varepsilon\right),\qquad\left(\tfrac{1}{915}, \tfrac{181}{183}\right),
    \]
    \item all exponent pairs that can be formed by convexity, such as
    \[
    \left(\tfrac{3571}{16296} + \varepsilon', \tfrac{9955}{16296} + \varepsilon'\right) = \tfrac{1}{2}\left(\tfrac{13}{84} + \varepsilon, \tfrac{55}{84} + \varepsilon\right) + \tfrac{1}{2}BA\left(\tfrac{13}{84} + \varepsilon, \tfrac{55}{84} + \varepsilon\right)
    \]
    for any $\varepsilon, \varepsilon' > 0$. 
\end{enumerate}
As new exponent pairs are discovered, the definition of $H$ will also need to be updated. The availability of efficient algorithms for computing convex hulls makes this process straightforward. 

\section{Applications}\label{sec:applications}

In this section we show that simply by choosing the best exponent pair contained inside $H$, we are able to make progress on several open problems. In some problems we are able to solve analytically for the optimal exponent pair. In others, we are content with candidate optima found using a numerical constrained optimisation program. 

To perform the numerical optimisation, in practice we approximate $H$ by a convex polygon with finitely many vertices, given by 
\begin{equation}\label{HN_vertices}
\{(k_n, \ell_n)\}_{|n| \le N} \cup \{(0, 1), (1/2, 1/2)\},
\end{equation}
for some large $N$ (say 10000). This polygon closely approximates $H$ even with moderately large $N$, and furthermore is strictly contained inside $H$, so that any solution found is guaranteed to be valid (even if it may be slightly sub-optimal). In all the applications below that make use of numerical optimisation, the found solution was far away from the extremeties $(0, 1)$ and $(1/2, 1/2)$, and no improvement was obtained by increasing $N$. 

Many of the results of this section are stated in terms of a parameter that varies continuously over some interval. Examples include $A \in [866/65, \infty)$ in Theorem \ref{M_bound_larger_A} and $\sigma \in [1/2, 1]$ in Theorem \ref{mu_est_thm} and \ref{zeta_bound_thm}. As a general remark we often find that as such a parameter varies, the optimal exponent pair for that parameter choice traverses along the boundary of $H$. There are two distinct regimes -- when the optimal exponent pair is close to extremities $(0, 1)$ and $(1/2, 1/2)$, and when it is close to the ``centre line" $\ell = k + 1/2$. In the former case, such as Theorem \ref{M_bound_12_thm} and \ref{zeta_bound_thm}, we tend to use exponent pairs of the form \eqref{hb_exponent_pair}, while in the latter case, such as Theorem \ref{M_bound_larger_A} and \ref{zeta_bound_thm}, we use exponent pairs of the form \eqref{BI_exp_pair} and \eqref{new_exp_pairs_4}, combined with van der Corput iteration.

\subsection{Moments of $\zeta(s)$}
Let $\zeta(s)$ denote the Riemann zeta-function. A central task in analytic number theory is to bound the moments 
\[
I_{2k} := \int_1^T|\zeta(1/2 + it)|^{2k}\text{d}t,
\]
which represents a mean-value result on the order of the zeta-function on the critical line. It is conjectured that $I_{2k} \ll_{\varepsilon} T^{1 + \varepsilon}$ for all $k > 0$, a result that is equivalent to the well-known Lindel\"of Hypothesis. Currently, the conjecture is only proven for $k = 1$ (e.g.\ \cite[Thm.~2.41]{hardy_contributions_1916}) and $k = 2$ \cite[Thm.~D]{hardy_approximate_1923} (in fact, precise asymptotics are known for $I_{2k}$ in these cases \cite{hardy_contributions_1916, ingham_mean_1928}). For higher $k$ some partial results are known, which are sometimes of independent interest due to their implications for zero-density estimates \cite{ivic_riemann_2003}. For instance, it is known that 
\[
I_A \ll_{\varepsilon} T^{M(A) + \varepsilon},\qquad M(A) \le \begin{cases}
1 + (A - 4)/8, &4\le A < 12,\\
2 + 3(A - 12)/22, &12 \le A < 178/13,\\
1 + 35(A - 6)/216, &178/13 \le A < 14,\\
1 + 9(A - 6)/56, &14 \le A.
\end{cases}
\]
This result combines contributions from multiple works, including \cite{hardy_approximate_1923, heathbrown_fourth_1979, ivic_riemann_2003, ivic_dirichlet_1989}. More recently Ivi\'{c} \cite{ivic_riemann_2005} has shown that $M(1238/75) \le 601/225$. Using an older result due to Ivi\'{c} \cite[Thm.\ 8.2]{ivic_riemann_2003}, combined with exponent pairs of the form \eqref{hb_exponent_pair}, we also obtain new estimates of $M(A)$ close to $A = 12$, as well as improved bounds on larger $A$. 
\begin{theorem}\label{M_bound_12_thm}
We have
\[
M(12 + \delta) \le 2 + \frac{\delta}{8} + \frac{3\sqrt{510}}{7568}\delta^{3/2},\qquad 0 < \delta \le \frac{86}{65}.
\] 
\end{theorem}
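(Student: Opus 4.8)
The plan is to combine Ivić's Theorem~8.2 of \cite{ivic_riemann_2003}, which turns any exponent pair $(k,\ell)$ into an explicit upper bound for $M(A)$ on a range of $A$ determined by $(k,\ell)$, with the family of exponent pairs $(p_m,q_m)$ from \eqref{hb_exponent_pair}. Since $(p_m,q_m)\to(0,1)$ as $m\to\infty$, these are exactly the pairs one expects to be optimal in the regime where $A$ is slightly larger than $12$, as indicated in \S\ref{sec:applications}. Substituting $p_m = 2/((m-1)^2(m+2))$ and $q_m = 1 - (3m-2)/(m(m-1)(m+2))$ into Ivić's estimate produces an inequality of the shape $M(A)\le\psi(m,A)$ with $\psi$ an explicit rational function of $m$ and $A$, so that the problem reduces to minimising $\psi(m,12+\delta)$ over the admissible $m$.

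To this I would adjoin two further standard inputs: the twelfth-moment bound $M(12)\le 2$ of Heath--Brown \cite{heathbrown_fourth_1979}, and the convexity of $A\mapsto M(A)$, which is immediate from Hölder's inequality. Interpolating linearly between $A=12$, where we use $M(12)\le 2$, and a point $B>12$, where we use $M(B)\le\psi(m,B)$, with the weight chosen so that the interpolation point is $A=12+\delta$, yields a two-parameter family of bounds
\[
M(12+\delta)\ \le\ 2+\delta\cdot\frac{\psi(m,B)-2}{B-12},
\]
and the theorem is the lower envelope of this family over admissible $m$ and $B$. Expanding the right-hand side using $p_m\asymp m^{-3}$ and $1-q_m\asymp m^{-2}$, one finds (after simplification and discarding terms that turn out to be non-positive in the relevant range) two competing contributions, one increasing like a constant times $\delta^2 m$ and one decreasing like a constant times $\delta/m$; balancing them by the AM--GM inequality places the optimum at $m\asymp\delta^{-1/2}$ and produces a bound of the form $2+\delta/8+2\sqrt{c_1c_2}\,\delta^{3/2}$, where the constant works out to $2\sqrt{c_1c_2}=3\sqrt{510}/7568$. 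The linear term $\delta/8$ is of course forced, being the secant slope of $M$ between $A=4$ and $A=12$; the content of the theorem is that the correction is only of order $\delta^{3/2}$.

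Two points remain. First, $m$ must be a positive integer $\ge 3$, whereas the optimisation above is over real $m$; here I would invoke the convexity of $H$ (Lemma~\ref{h_theta_convex_lem}) together with the fact that every $(p_m,q_m)$ lies in $H$ (Lemma~\ref{H_contains_lem}), so that every point of the segment joining $(p_m,q_m)$ and $(p_{m+1},q_{m+1})$ is again an exponent pair; since Ivić's bound varies continuously, indeed monotonically, along this segment, the real-$m$ optimum is legitimately attained (up to the ubiquitous $\varepsilon$) and no rounding loss occurs. Second, one must check that the optimal $m^{*}(\delta)$ satisfies $m^{*}\ge 3$ and that the range hypothesis of Ivić's theorem is met; this is precisely the condition $0<\delta\le 86/65$, and at the endpoint $\delta=86/65$ (where $m^{*}=3$) the bound matches the left endpoint of Theorem~\ref{M_bound_larger_A}. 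The main obstacle is the optimisation of the second paragraph: keeping careful track of which terms of $\psi(m,12+\delta)$ contribute at orders $\delta$ and $\delta^{3/2}$, pinning down the constant $3\sqrt{510}/7568$ exactly rather than asymptotically, and verifying that every omitted term is genuinely $\le 0$ throughout $0<\delta\le 86/65$.
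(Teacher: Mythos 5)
Your overall strategy is close to the paper's, but there is a concrete gap: you propose substituting $(p_m, q_m)$ from \eqref{hb_exponent_pair} directly into Ivi\'{c}'s estimate (Lemma~\ref{ivic_exponent_pair_R_bound}). With $(k,\ell)=(p_m,q_m)$ one has $k=p_m\asymp m^{-3}$, so the second term in Ivi\'{c}'s bound has exponents $1+\ell/k\asymp m^{3}/2$ for $T$ and $2(1+2k+2\ell)/k\asymp 3m^{3}$ for $V$ — these are useful for $A\asymp m^{3}$, not for $A$ near $12$, and the resulting bound on $M(12)$ is worse than Heath--Brown's. The paper first applies the $B$ process, using $(k,\ell)=B(p_m,q_m)=\bigl(\tfrac12-\tfrac{3m-2}{m(m-1)(m+2)},\;\tfrac12+\tfrac{2}{(m-1)^2(m+2)}\bigr)$, for which the $V$ exponent becomes $12+\delta_m$ with $\delta_m\to 0$; your remark that $(p_m,q_m)\to(0,1)$ is the relevant limit is therefore the wrong extremity — it is $B(p_m,q_m)\to(1/2,1/2)$ that matters for $A$ just above $12$. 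Without the $B$ step the proof as written fails; with it, the plan becomes sound.

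The rest of your outline differs from the paper in a genuine but reconcilable way. You interpolate between $M(12)\le 2$ (Heath--Brown) and $M(B)\le\psi(m,B)$ using convexity of $A\mapsto M(A)$, while the paper interpolates the two large-values bounds $R\ll T^{\phi_m}V^{-(12+\delta_m)}$ and $R\ll T^{\phi_{m+1}}V^{-(12+\delta_{m+1})}$ directly (via $R=R^{\lambda}R^{1-\lambda}$), which gives exactly $M(12+\lambda\delta_m+(1-\lambda)\delta_{m+1})\le\lambda\phi_m+(1-\lambda)\phi_{m+1}$. Your version is pointwise slightly weaker — for $\delta\in(\delta_{m+1},\delta_m]$ the secant from $(12,2)$ to $(12+\delta_m,\phi_m)$ lies above the segment between $(12+\delta_{m+1},\phi_{m+1})$ and $(12+\delta_m,\phi_m)$ — but one can check (using $C_m:=\delta_m^{-3/2}(\phi_m-2-\delta_m/8)\le C_6$ and $\delta_m/\delta_{m+1}\le\delta_6/\delta_7=2193/1573=(C/C_6)^2$ for $m\ge 6$) that it still yields the stated constant $C=3\sqrt{510}/7568$, so the theorem goes through. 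However, your ``competing contributions $\delta^{2}m$ and $\delta/m$, balanced by AM--GM'' description does not match the actual optimisation: after applying the $B$ process there is a single correction term $C_m\delta_m^{1/2}\delta$ to control, and the optimal $m$ is simply the largest one with $\delta_m\ge\delta$ (forced by $\delta_m\asymp m^{-2}$), rather than the output of a two-term balance; the paper's computation of $C_m$ and of $\delta_m/\delta_{m+1}$ is where the constant actually comes from. Similarly, the appeal to Lemmas~\ref{h_theta_convex_lem} and \ref{H_contains_lem} to pass to real $m$ is not what is needed: convex combinations of exponent pairs do not map linearly through Ivi\'{c}'s bound, and the paper instead handles the integer constraint by interpolating the $R$-bounds themselves.
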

\begin{theorem}\label{M_bound_larger_A}
We have
\[
M(A) \leq \begin{cases}
    (16A + 35)/114 ,& \frac{866}{65} \leq A < 14, \\
	 (176677A + 358428)/1246476 ,& 14 \leq A < \frac{122304}{7955} = 15.37\ldots, \\
	 (779A + 1398)/5422 ,& \frac{122304}{7955} \leq A < \frac{910020}{58699} = 15.50\ldots, \\
	 3(1661A + 2856)/34532 ,& \frac{910020}{58699} \leq A < \frac{9604}{593} = 16.19\ldots, \\
	 (405277A + 677194)/2800950 ,& \frac{9604}{593} \leq A < \frac{629068}{35731} = 17.60\ldots, \\
	 (40726597A + 64268678)/280113282 ,& \frac{629068}{35731} \leq A < \frac{13789}{709} = 19.44\ldots, \\
	 3(46A + 49)/926 ,& \frac{13789}{709} \leq A < \frac{204580}{10333} = 19.79\ldots,\\
	 (3475A + 3236)/23168 ,& \frac{204580}{10333} \leq A < \frac{4252}{195} = 21.80\ldots, \\
	 7(39945A + 33704)/1857036 ,& \frac{4252}{195} \leq A < \frac{812348}{30267} = 26.83\ldots, \\
	 (37A + 24)/244 ,& \frac{812348}{30267} \leq A < \frac{440}{13} = 33.84\ldots, \\
	 (31A - 24)/196 ,& \frac{440}{13} \leq A < \frac{203087}{4742} = 42.82\ldots, \\
	 7(31519A - 33704)/1385180 ,& \frac{203087}{4742} \leq A < \frac{3516129}{65729} = 53.49\ldots, \\
    1 + 13(A - 6)/84 ,& \frac{3516129}{65729} \leq A.
\end{cases}
\]
\end{theorem}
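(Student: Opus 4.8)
\emph{Proof proposal.} The plan is to build on Ivi\'c's estimate \cite[Thm.\ 8.2]{ivic_riemann_2003}, which attaches to every exponent pair $(k,\ell)$ an explicit upper bound for $M(A)$, affine in $A$ on a range $A\in[A_-(k,\ell),A_+(k,\ell)]$, with coefficients that are simple rational functions of $(k,\ell)$. Splicing this with H\"older's inequality --- interpolating against the classical estimates $M(4)\le 1$ and $M(12)\le 2$ --- produces, for each exponent pair $(k,\ell)$, a function $\Phi(\cdot\,;k,\ell)$ that is piecewise affine in $A$ and satisfies $I_A\ll_\varepsilon T^{\Phi(A;k,\ell)+\varepsilon}$ for $A\ge 12$. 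For each $A$ I would then minimise $\Phi(A;k,\ell)$ over all known exponent pairs; by Theorem \ref{reg_thm_1} this is exactly the minimum over $(k,\ell)\in H$, so the problem becomes a two-variable constrained optimisation that can be solved exactly.

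Next I would exploit the geometry of $H$. For fixed $A$, Ivi\'c's bound is, after a harmless change of variables, a linear functional of $(k,\ell)$, hence minimised over the convex polygon $H$ at one of its vertices $(k_n,\ell_n)$ of Definition \ref{convex_hull_H_defn} --- \emph{provided} that vertex lies in the admissible range $[A_-,A_+]$ for that $A$. When it does not, the governing bound is instead the H\"older interpolate formed at the endpoint of the range, whose $(k,\ell)$-dependence differs, and the minimiser then slides along a single edge of $H$ as $A$ varies; this is the source of those pieces of $M(A)$ in the statement that correspond to an edge rather than a vertex. Since $H$ is convex (Lemma \ref{h_theta_convex_lem}), the optimal vertex migrates monotonically along $\partial H$ as $A$ increases from $866/65$ --- where this theorem abuts Theorem \ref{M_bound_12_thm} --- to $\infty$: from the neighbourhood of $(1/2,1/2)$, where the relevant vertices are the $B$-images of the Heath-Brown pairs \eqref{hb_exponent_pair}, through the Bombieri--Iwaniec pair $(13/84,55/84)=(k_0,\ell_0)$ and the new pairs $(k_1,\ell_1),\dots,(k_4,\ell_4)$ of Lemma \ref{new_exponent_pairs_lem} together with their van der Corput iterates, and finally --- for the largest $A$ --- back to $(13/84,55/84)$ itself, which yields the terminal exponent $1+13(A-6)/84$.

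What remains is finite computation. I would list the at most thirteen candidate vertices, compute the transition value of $A$ between each consecutive pair by solving $\Phi(A;k_n,\ell_n)=\Phi(A;k_{n+1},\ell_{n+1})$ --- these equations produce the breakpoints $14,\ 122304/7955,\ 910020/58699,\dots$ exactly as displayed --- and on each resulting subinterval substitute the optimal vertex into $\Phi$ and simplify to the stated affine-in-$A$ formula; adjacent pieces agree at the breakpoints by construction. At each stage one checks that the chosen vertex genuinely lies in the range of validity of the invoked estimate (otherwise the edge case above applies). The numerical optimiser of \S\ref{sec:applications}, run on the polygonal approximation \eqref{HN_vertices}, provides an independent confirmation of the candidate optima, and enlarging $N$ produces no improvement.

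The hard part will be putting Ivi\'c's Theorem 8.2 into a sufficiently explicit and uniform form and then tracking the interplay between its range of validity and the H\"older interpolation, so that the true feasible set is identified for every $A$; this is precisely what decides whether a given piece of $M(A)$ comes from a vertex of $H$ or from an edge. Once that is in hand the optimisation itself is routine --- the only genuine labour being the thirteen-fold case split and the verification of the breakpoints --- since the convexity and the closure of $H$ under $A$, $B$ and $C$ (Lemmas \ref{h_theta_convex_lem} and \ref{H_closed_A_lem}) guarantee that no other known exponent pair, including those obtained from process $C$ or from convexity, can beat the selected vertex.
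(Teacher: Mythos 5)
Your proposal captures the right skeleton---apply Ivi\'c's Theorem~8.2 and optimise over $H$---but it contains two descriptive missteps and one genuine gap. The missteps: for a fixed exponent pair $(k,\ell)$, Ivi\'c's theorem does not yield a bound ``affine in $A$ on a range''; it yields $M(A)\le A/4-1/(2k)$ at the \emph{single} value $A=4+(2+4\ell)/k$. The affine-in-$A$ pieces arise by varying $(k,\ell)$ along an edge of $\partial H$: for fixed $A$, the constraint $(2+4\ell)/k=A-4$ is a line in the $(k,\ell)$-plane, and minimising $k$ over its intersection with $H$ produces an optimum generically interior to an edge, not at a vertex; the breakpoints are exactly the $A$-values $4+(2+4\ell_n)/k_n$ at which that optimum passes through a vertex. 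Your vertex-first plan with H\"older interpolation would ultimately reproduce the same formulas, but the mechanism you invoke (``linear functional minimised at a vertex; edge cases from H\"older endpoint effects'') is not what is actually driving the piecewise structure.

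The genuine gap is the terminal piece, $M(A)\le 1+\frac{13}{84}(A-6)$ for $A\ge \frac{3516129}{65729}$. You write that for the largest $A$ the optimum comes ``back to $(13/84,55/84)$ itself, which yields the terminal exponent,'' but the vertex/edge optimisation does not circle back: as $A$ grows past $440/13$ the constraint line drives the optimum \emph{towards} $(0,1)$ along the Heath--Brown series $(p_m,q_m)$, and the resulting bounds have asymptotic slope $1/6$, strictly \emph{worse} than $13/84$. The paper instead proves a standalone lemma---a sharpening of Ivi\'c's Theorem~8.3 that extends the admissible slope from $c\ge 4/25$ down to $c\ge(\sqrt{13}-3)/4$---by dissecting the large-values set into dyadic blocks $Z<|\zeta(1/2+i\tau_s)|\le 2Z$ and combining the pointwise bound $\zeta(1/2+it)\ll_\varepsilon t^{13/84+\varepsilon}$ with the exponent pair $(13/84+\varepsilon,55/84+\varepsilon)$. (One could also reach a marginally stronger constant via H\"older/sup-norm extrapolation from the single vertex bound $M(440/13)\le 68/13$, which is closer to what you gesture at, but your proposal does not actually carry that out either.) As written, your plan establishes the first twelve pieces but leaves the last unsupported.
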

In particular, we have 
\begin{align*}
M(13) &\le 2.1340,&M(14) &\le 2.2720,&M(15) &\le 2.4137,\\
M(16) &\le 2.5570,&M(17) &\le 2.7016,&M(18) &\le 2.8466.
\end{align*}

Ivi\'{c} \cite{ivic_mean_2004} also studied another type of partial result --- hybrid moments of the form  
\[
I_{4 + 2j}(\sigma) = \int_1^T|\zeta(1/2 + it)|^4|\zeta(\sigma + it)|^{2j}\text{d}t
\]
and showed that $I_6(\sigma) \ll_{\varepsilon} T^{1 + \varepsilon}$ for $\sigma \ge 5/6$ and $I_{8}(\sigma) \ll_{\varepsilon} T^{1 + \varepsilon}$ for $\sigma \ge 1953/1984 = 0.9843\ldots$. Note that if the ranges of $\sigma$ can be extended to $\sigma \ge 1/2$, then the conjectured results of $I_{2k}$ for $k = 3$ and $4$ follow immediately. Ivi\'{c} and Zhai \cite{ivic_mean_2012} improved (amongst other results) that $I_8(\sigma) \ll_{\varepsilon} T^{1 + \varepsilon}$ to all $\sigma > 37/38 = 0.9736\ldots$. By applying the method of \S \ref{convex_hull_sec}, we obtain the following improvement. 

\begin{theorem}\label{zeta_moment_thm}
We have 
\[
\int_1^T|\zeta(1/2 + it)|^4|\zeta(\sigma + it)|^4\text{d}t \ll_{\varepsilon} T^{1 + \varepsilon},\qquad \sigma > \frac{309}{320} = 0.965625.
\]
\end{theorem}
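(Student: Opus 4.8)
\emph{Proof idea.} The plan is to take the reduction, due to Ivi\'c and Zhai \cite{ivic_mean_2012}, of the bound $I_8(\sigma) \ll_\varepsilon T^{1+\varepsilon}$ to a statement about exponent pairs, carry it out keeping the exponent pair $(k,\ell)$ symbolic, and then minimise $\sigma$ over the hull $H$ using the framework of \S\ref{convex_hull_sec}. The reduction has the following shape. Via the approximate functional equation for $\zeta^2$ one writes, for parameters $1 \le X \le Y$ with $XY \asymp (t/2\pi)^2$,
\[
\zeta(\sigma + it)^2 = \sum_{n \le X} d(n)\, n^{-\sigma - it} \;+\; \chi(\sigma+it)^2 \sum_{n \le Y} d(n)\, n^{\sigma - 1 - it} \;+\; (\text{admissible error}),
\]
and substitutes this into $\int_1^T |\zeta(1/2+it)|^4 |\zeta(\sigma+it)|^4\,dt$. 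The short sum, and the short dyadic blocks of the long sum, are handled by a twisted fourth moment estimate $\int_1^T |\zeta(1/2+it)|^4 \bigl|\sum_{n \le M} c_n n^{-it}\bigr|^2 dt \ll_\varepsilon T^{1+\varepsilon} \sum_{n\le M}|c_n|^2$, valid for $M$ up to a fixed power of $T$, whose right side for coefficients $c_n \asymp d(n) n^{-\sigma}$ is $O_\varepsilon(T^{1+\varepsilon})$ because $\sigma > 1/2$. A long dyadic block $n \sim M$ of the long sum is bounded pointwise: after extracting $\chi(\sigma+it)^2 \asymp t^{1-2\sigma}$ and $M^{\sigma-1}$ one is left with the divisor exponential sum $\sum_{n \sim M} d(n) n^{-it}$, which by the hyperbola method and an exponent pair $(k,\ell)$ is $\ll_\varepsilon M^{\varepsilon}\, t^{k} M^{(\ell - k + 1)/2}$; combined with $\int_1^T |\zeta(1/2+it)|^4 dt \ll_\varepsilon T^{1+\varepsilon}$ this contributes $\ll_\varepsilon T^{1+\varepsilon} \bigl(t^{1-2\sigma+k}\, M^{(\ell-k+1)/2 + \sigma - 1}\bigr)^{2}$ to $I_8(\sigma)$.

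Performing this with $(k,\ell)$ symbolic, and choosing $X$, $Y$ and the cutoff between the mean-value and pointwise blocks so as to balance all error terms against $T^{1+\varepsilon}$, yields a constraint of the form $\Phi(\sigma; k, \ell) \le 0$ (in general a small system of such inequalities, each affine in $\sigma$ and bilinear in $(k,\ell)$) whose satisfiability for some exponent pair $(k,\ell)$ suffices for $I_8(\sigma) \ll_\varepsilon T^{1+\varepsilon}$. By Theorem \ref{reg_thm_1} the admissible $(k,\ell)$ are exactly the interior of $H$, so the task is to find the infimum of $\sigma$ for which there is $(k,\ell) \in H$ meeting the constraint. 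As $H$ is convex with the explicit vertex list \eqref{knln_defn}, this is a small constrained optimisation; as elsewhere in \S\ref{sec:applications} I would carry it out with $H$ replaced by the inner polygon \eqref{HN_vertices} for large $N$ so that the value found is rigorously valid, then read off the minimiser, check it lies well away from $(0,1)$ and $(1/2,1/2)$, and identify the optimal exponent pair --- which, since $\sigma = 309/320$ is close to $1$, should lie on the part of $\partial H$ near the extremity $(0,1)$.

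The principal obstacle is the first step: fixing the correct constraint $\Phi$. The analytic ingredients --- the approximate functional equation for $\zeta^2$, the twisted fourth power moment, and the exponent-pair bound for $\sum d(n) n^{-it}$ --- are classical and already appear in \cite{ivic_riemann_2003, ivic_mean_2012}, but there is genuine latitude in the setup (how far to push the lengths $X$, $Y$, how large an $M$ the mean value estimate tolerates, whether to iterate the procedure), and a suboptimal choice degrades the threshold. Once $\Phi$ is fixed, the optimisation over $H$ is mechanical given \S\ref{convex_hull_sec}; the improvement over the previous bound $\sigma > 37/38$ of \cite{ivic_mean_2012} comes from being able to optimise over all of $H$, and in particular to use exponent pairs --- Heath--Brown's \eqref{hb_exponent_pair} and those of Lemma \ref{new_exponent_pairs_lem} --- that were unavailable when \cite{ivic_mean_2012} was written.
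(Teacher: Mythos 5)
Your plan coincides with the paper's: reduce via Ivi\'c--Zhai and then minimise $\sigma$ over $H$. The constraint $\Phi$ you are worried about pinning down is in fact already stated explicitly and symbolically in \cite{ivic_mean_2012} --- for an exponent pair $(k,\ell)$ with $\ell + (2j-1)k < 1$ one has $I_{4+2j}(\sigma) \ll_\varepsilon T^{1+\varepsilon}$ for $\sigma > (\ell - k + 6jk)/(1+4jk)$ --- so the paper simply minimises $(\ell + 11k)/(1+8k)$ over $H$ subject to $\ell + 3k < 1$, finds the optimum at $(k,\ell) = (p_5, q_5) = (1/56, 127/140 + \varepsilon)$ (near $(0,1)$, as you anticipated), and this gives exactly $309/320$.
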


\subsection{Bounds on $\zeta(s)$ in the critical strip}
For any $1/2 \le \sigma \le 1$, let $\mu(\sigma)$ be the infimum of numbers $\mu$ such that 
\[
\zeta(\sigma + it) \ll t^{\mu},\qquad (t \to \infty).
\]
The unproven Lindel\"of hypothesis is equivalent to the assertion that $\mu(\sigma) = 0$ for $1/2 \le \sigma \le 1$. Among many other applications, bounds on $\mu(\sigma)$ have been used to construct zero-free regions and zero-density estimates for $\zeta(s)$. Various bounds are known to hold for specific values of $\sigma$, such as
\[
\mu\left(\tfrac{1}{2}\right) \le \tfrac{13}{84},\quad \mu\left(\tfrac{1934}{3655}\right) \le \tfrac{6299}{43860},\quad \mu\left(\tfrac{3}{5}\right) \le \tfrac{1409}{12170},\quad \mu\left(\tfrac{11}{15}\right) \le \tfrac{1}{15}, \quad \mu\left(\tfrac{4}{5}\right) \le \tfrac{3}{71},
\] 
\[
\mu\left(\tfrac{49}{51}\right) \le \tfrac{1}{204},\quad \mu\left(\tfrac{361}{370}\right) \le \tfrac{1}{370}.
\]
In order of appearance, these results are due to Bourgain \cite{bourgain_decoupling_2016}, Huxley \cite[(21.2.4)]{huxley_area_1996}, Lelechenko \cite{Lelechenko_linear_2014}, Heath-Brown \cite{demeter_small_2020}, Lelechenko \cite{Lelechenko_linear_2014}, Sargos \cite{sargos_analog_2003} and Sargos \cite{sargos_analog_2003}. The bound on $\mu(1/2)$ in particular represents the best-known result after over a century of effort, see for example \cite{corput_1921, titchmarsh_van_1931, phillips_zeta-function_1933, titchmarsh_order_1942, min_on_1949, rankin_van_1955, haneke_verscharfung_1963, kolesnik_order_1982, bombieri_order_1986, bombieri_some_1986, watt_exponential_1989, huxley_exponential_1991, huxley_exponential_1993, huxley_area_1996, huxley_exponential_2005, bourgain_decoupling_2016}. Furthermore, the classical estimates of van der Corput and Hardy--Littlewood (see \S 5.15 in \cite{titchmarsh_theory_1986}) respectively give
\[
\mu\left(1 - \frac{n}{2^n - 2}\right) \le \frac{1}{2^n - 2},\qquad \mu\left(1 - \frac{1}{2^{n - 1}}\right) \le \frac{1}{(n + 1)2^{n - 1}},
\]
for any integer $n \ge 3$. The classical van der Corput estimate has also been improved by Phillips \cite{phillips_zeta-function_1933} and Graham--Kolesnik \cite[Thm.\ 4.2]{graham_van_1991}. Bounds for other values of $\sigma$ can then be obtained using the convexity estimate: for fixed $\sigma_1 < \sigma_2$, 
\[
\mu(\lambda \sigma_1 + (1 - \lambda)\sigma_2) \le \lambda \mu(\sigma_1) + (1 - \lambda)\mu(\sigma_2),\qquad 0 \le \lambda \le 1. 
\]
Lastly, for $\sigma$ close to 1, the best-known bounds on $\mu(\sigma)$ take the form
\begin{equation}\label{hb_zeta_bound}
\mu(\sigma) \le B(1 - \sigma)^{3/2}.
\end{equation}
Heath-Brown \cite{heathbrown_new_2017} showed that \eqref{hb_zeta_bound} holds with $B = 8\sqrt{15}/63 = 0.4918\ldots$ and $1/2 \le \sigma \le 1$. By optimising the choice of exponent pair, we are able to obtain results that contains all of the above. 
\begin{theorem}\label{mu_est_thm}
We have 
\[
\mu(\sigma) \le \begin{cases}
	 (31 - 36\sigma)/84 , & \frac{1}{2} \leq\sigma < \frac{88225}{153852} = 0.5734\ldots, \\
	 (220633 - 251324\sigma)/620612 , & \frac{88225}{153852} \leq\sigma < \frac{521}{796} = 0.6545\ldots, \\
	 (1333 - 1508\sigma)/3825 , & \frac{521}{796} \leq\sigma < \frac{53141}{76066} = 0.6986\ldots, \\
	 (405 - 454\sigma)/1202 , & \frac{53141}{76066} \leq\sigma < \frac{3620}{5119} = 0.7071\ldots, \\
	 (49318855 - 52938216\sigma)/170145110 , & \frac{3620}{5119} \leq\sigma < \frac{52209}{69128} = 0.7552\ldots, \\
	 (471957 - 502648\sigma)/1682490 , & \frac{52209}{69128} \leq\sigma < \frac{1389}{1736} = 0.8001\ldots, \\
	 (2841 - 3016\sigma)/10316 , & \frac{1389}{1736} \leq\sigma < \frac{134765}{163248} = 0.8255\ldots, \\
	 (859 - 908\sigma)/3214 , & \frac{134765}{163248} \leq\sigma < \frac{18193}{21906} = 0.8305\ldots, \\
	 5(8707 - 9067\sigma)/180277 , & \frac{18193}{21906} \leq\sigma < \frac{249}{280} = 0.8892\ldots, \\
	 (29 - 30\sigma)/130 , & \frac{249}{280} \leq\sigma \leq \frac{9}{10}.\\
\end{cases}
\]
\end{theorem}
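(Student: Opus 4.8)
The proof combines the classical passage from exponent pairs to bounds for $\zeta$ in the critical strip with the explicit convex hull $H$ of Theorem~\ref{reg_thm_1}: once all known exponent pairs are known to fill out $H$, the best bound this passage can produce becomes a transparent constrained optimisation problem in the $(k,\ell)$-plane, whose solution moves along the boundary of $H$ as $\sigma$ varies and is therefore piecewise linear in $\sigma$.

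\textbf{Step 1 (the exponent-pair bound for $\mu(\sigma)$).} Recall the standard fact that if $(k,\ell)$ is an exponent pair then
\[
\mu(\sigma)\ \le\ \frac{k+\ell-\sigma}{2}\qquad\text{for}\quad 1-(\ell-k)\ \le\ \sigma\ \le\ \ell-k .
\]
This follows by inserting the approximate functional equation for $\zeta(\sigma+it)$ with the two Dirichlet polynomials of equal length $\sqrt{t/2\pi}$, splitting each into dyadic blocks, applying partial summation together with \eqref{exp_pair_defn} to the phase $f(u)=-\tfrac{t}{2\pi}\log u$ (which satisfies \eqref{exp_pair_hypothesis} with exponent $1$ and amplitude $t/2\pi$), and using the size of the factor from the functional equation; see \cite[\S5.13]{titchmarsh_theory_1986}, \cite[Ch.~7]{ivic_riemann_2003}, or \cite[Ch.~2]{graham_van_1991}. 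By Theorem~\ref{reg_thm_1} every interior point of $H$ is a genuine exponent pair; since $\mu(\sigma)$ is an infimum, one may push a boundary point of $H$ slightly into $H$ along the direction $(1,1)$ — which leaves $\ell-k$ unchanged — and pass to the limit, so the displayed bound in fact holds for \emph{every} $(k,\ell)\in H$ with $\ell-k\ge\sigma$, the auxiliary condition $\ell-k\ge1-\sigma$ being automatic once $\sigma\ge\tfrac12$. Hence for $\tfrac12\le\sigma\le1$,
\[
\mu(\sigma)\ \le\ \tfrac12\,\inf\bigl\{\,k+\ell-\sigma\ :\ (k,\ell)\in H,\ \ell-k\ge\sigma\,\bigr\}.
\]

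\textbf{Step 2 (solving the optimisation).} One checks that the vertex $(k_0,\ell_0)=(\tfrac{13}{84},\tfrac{55}{84})$ minimises $k+\ell$ over $H$ and that $\ell_0-k_0=\tfrac12$; hence for $\sigma>\tfrac12$ the constraint $\ell-k\ge\sigma$ is active, the infimum is attained on the line $\ell=k+\sigma$ where $k+\ell-\sigma=2k$, and it therefore equals the smallest $k$ with $(k,k+\sigma)\in H$ — i.e. the abscissa of the point where the slope-$1$ line $\ell=k+\sigma$ meets the ``upper'' boundary of $H$, namely the infinite polygon through the $(k_n,\ell_n)$. Because the numbers $\ell_n-k_n$ are strictly increasing in $n$ (a short check from \eqref{knln_defn}, \eqref{A_process_defn} and \eqref{hb_exponent_pair}), whenever $\ell_n-k_n\le\sigma\le\ell_{n+1}-k_{n+1}$ this crossing lies on the edge from $(k_n,\ell_n)$ to $(k_{n+1},\ell_{n+1})$; parametrising that edge and solving $\ell-k=\sigma$ expresses the abscissa $k_\ast(\sigma)$ of the crossing as an affine function of $\sigma$ on that interval, with $k_\ast(\ell_n-k_n)=k_n$. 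Thus $\mu(\sigma)\le k_\ast(\sigma)$, a continuous piecewise-linear function whose breakpoints are precisely the $\sigma=\ell_n-k_n$.

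\textbf{Step 3 (the explicit pieces, and the delicate point).} Evaluating $\ell_n-k_n$ for $n=0,1,\dots,9$ gives the breakpoints $\tfrac12,\tfrac{88225}{153852},\tfrac{521}{796},\tfrac{53141}{76066},\tfrac{3620}{5119},\tfrac{52209}{69128},\tfrac{1389}{1736},\tfrac{134765}{163248},\tfrac{18193}{21906},\tfrac{249}{280}$, and solving $\ell-k=\sigma$ on the corresponding edge on each interval produces the ten affine formulas of the statement: for instance on $[\tfrac12,\tfrac{88225}{153852}]=[\ell_0-k_0,\ell_1-k_1]$ the edge $(k_0,\ell_0)$--$(k_1,\ell_1)$ yields $k_\ast(\sigma)=(31-36\sigma)/84$, and on the last interval the edge $(k_9,\ell_9)$--$(k_{10},\ell_{10})=(p_5,q_5)$--$(p_6,q_6)$ yields $(29-30\sigma)/130$ (we simply stop at $\sigma=\tfrac9{10}$, comfortably inside this edge since $\ell_{10}-k_{10}=\tfrac{277}{300}$). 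Continuity at each breakpoint is automatic since $k_\ast$ is continuous. The bulk of the work is routine rational arithmetic — computing the $\ell_n-k_n$, intersecting a slope-$1$ line with each polygon edge, and reducing fractions — which is exactly the finite constrained linear program of \S\ref{sec:applications}, and can equally well be located numerically and then checked symbolically. The one point genuinely requiring care is the geometric input to Step~2: that throughout $\tfrac12\le\sigma\le\tfrac9{10}$ the optimal exponent pair stays on the $(k_n,\ell_n)$-polygon (and never on the segment joining $(0,1)$ and $(\tfrac12,\tfrac12)$), and that the active edge advances monotonically with $\sigma$; this follows from the monotonicity of $\ell_n-k_n$ together with the (easily checked) bound $|dk_\ast/d\sigma|\le\tfrac12$ on each edge, which at the same time confirms both that the constraint $\ell-k\ge\sigma$ is truly binding and that no convexity interpolation of known $\mu$-values — nor the trivial $\mu(1)=0$ — can improve on $k_\ast(\sigma)$.
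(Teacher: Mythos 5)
Your argument is correct and reaches the same piecewise-linear formula via essentially the same two ingredients (the approximate functional equation, then minimisation over $H$), with one worthwhile variation at the interpolation step. The paper, having derived $\mu(\ell_n - k_n) \le k_n$ at the vertices $(k_n,\ell_n)$ for $0 \le n \le 10$, invokes the classical convexity of $\sigma \mapsto \mu(\sigma)$ (Phragm\'en--Lindel\"of) to fill in the intermediate $\sigma$; you instead observe that, because $H$ itself is a convex polygon and each edge has negative slope while the constraint $\ell - k = \sigma$ has slope $1$, the minimising $(k,\ell)$ slides linearly along one edge at a time, so the bound $\mu(\sigma) \le k_*(\sigma)$ comes out piecewise affine with breakpoints at $\sigma = \ell_n - k_n$ without ever mentioning $\mu$-convexity. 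The outputs agree because the convex combination of vertices producing $k_*(\sigma)$ is exactly the convex combination the paper applies to the endpoint bounds. Your approach buys self-containment — it relies only on the structure of $H$ already established in the paper and on the infimum definition of $\mu$ (which licenses the push along $(1,1)$ from $\partial H$ to the interior, preserving $\ell-k$); the paper's is a touch shorter because $\mu$-convexity is freely available. Your supplementary checks — that $(k_0,\ell_0)$ minimises $k+\ell$ on $H$ so the constraint is active for $\sigma>\tfrac12$, that $\ell_n-k_n$ increases in $n$, and that the line $\ell=k+\sigma$ meets the $(k_n,\ell_n)$-polygon rather than the chord from $(0,1)$ to $(\tfrac12,\tfrac12)$ — are all correct and fill in details the paper leaves implicit.
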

In fact the last bound holds for the larger range $249/280 \le \sigma \le 277/300$, however for $\sigma > 9/10$ it is surpassed by the following, which is an improvement of \cite{heathbrown_new_2017}.
\begin{theorem}\label{zeta_bound_thm}
For $1/2 \le \sigma \le 1$, we have 
\[
\mu(\sigma) \le \frac{2}{13}\sqrt{10}(1 - \sigma)^{3/2} = 0.4865\ldots(1 - \sigma)^{3/2}.
\]
\end{theorem}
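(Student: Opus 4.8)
The plan is to reduce the bound to a constrained optimisation over $H$, exactly as advertised in \S\ref{convex_hull_sec}: translate each exponent pair into a bound on $\mu$, optimise over $H$, and observe that the resulting envelope lies below $g(\sigma):=\tfrac{2}{13}\sqrt{10}\,(1-\sigma)^{3/2}$, with the value $\tfrac{2}{13}\sqrt{10}$ forced by a single tangency. The one technical input I would record first is the standard passage from exponent pairs to $\mu$: if $(k,\ell)$ is an exponent pair then, for $\max(\tfrac12,\ell-k)\le\sigma\le 1$,
\[
\mu(\sigma)\le \frac{k(1-\sigma)}{1-\ell+k}.
\]
This follows by writing $\zeta(\sigma+it)=\sum_{n\le N}n^{-\sigma-it}+O(N^{1-\sigma}/t)+O(N^{-\sigma})$ with $N\asymp t$, splitting $[1,N]$ into $O(\log t)$ dyadic blocks, and bounding the block $(M,2M]$ by $\min(M^{1-\sigma},\,t^kM^{\ell-k-\sigma})$, the second estimate coming from the exponent pair applied to $f(x)=-\tfrac{t}{2\pi}\log x$ together with partial summation; the two estimates agree at $M_0=t^{k/(1-\ell+k)}$, the trivial one is the smaller for $M\le M_0$, and the exponent-pair one is the smaller and is \emph{decreasing} for $M>M_0$ since $\sigma\ge\ell-k$, so every block is $\ll M_0^{1-\sigma}=t^{k(1-\sigma)/(1-\ell+k)}$. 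By Theorem~\ref{reg_thm_1} all known exponent pairs lie in $H$, and since $H$ is convex and $\mu$ is convex, for each $\sigma$ we may minimise $\tfrac{k}{1-\ell+k}$ over the admissible part of $H$ and interpolate between the pairs chosen.

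For $\tfrac{277}{300}\le\sigma<1$ I would feed the Heath--Brown pairs \eqref{hb_exponent_pair} into the lemma: $(p_m,q_m)$ is admissible precisely when $1-\sigma\le\delta_m:=1-q_m+p_m=\tfrac{3m^2-3m+2}{m(m-1)^2(m+2)}$, and then gives $\mu(\sigma)\le\tfrac{p_m}{\delta_m}(1-\sigma)$. Applying convexity of $\mu$ between the corners $(1-\delta_m,p_m)$ (equivalently, interpolating consecutive pairs inside $H$) yields a convex piecewise-linear bound whose ratio to $(1-\sigma)^{3/2}$ one checks never exceeds $\max_{m\ge 6}p_m/\delta_m^{3/2}=p_6/\delta_6^{3/2}=0.47\ldots<\tfrac{2}{13}\sqrt{10}$, so $g$ dominates on this range with room to spare. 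For $\tfrac12\le\sigma\le\tfrac{277}{300}$ I would instead invoke Theorem~\ref{mu_est_thm}, extended as in the remark after it so that its last bound $\mu(\sigma)\le(29-30\sigma)/130$ holds on $[\tfrac{249}{280},\tfrac{277}{300}]$. Writing the resulting bound as a convex piecewise-linear $\phi$, the inequality $\phi\le g$ is verified piece by piece: on each linear piece $g-\phi$ is convex in $\sigma$, so its sign is determined by its values at the two endpoints and at its unique interior stationary point, a finite computation.

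The constant emerges from one of those pieces. On $\phi(\sigma)=(29-30\sigma)/130$, imposing $g(\sigma)=\phi(\sigma)$ and $g'(\sigma)=\phi'(\sigma)=-\tfrac{3}{13}$ forces $B(1-\sigma)^{1/2}=\tfrac{2}{13}$, hence, writing $g(\sigma)=B(1-\sigma)^{1/2}\cdot(1-\sigma)$, we get $\tfrac{2}{13}(1-\sigma)=(29-30\sigma)/130$, i.e.\ $\sigma=\tfrac{9}{10}$ and then $B=\tfrac{2}{13}\sqrt{10}$. Thus with $B=\tfrac{2}{13}\sqrt{10}$ the curve $g$ is tangent from above to this linear piece at $\sigma=\tfrac9{10}$, where the two sides of the theorem coincide; any smaller constant would fail there, and any valid constant at least this large dominates everything else.

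The routine parts are the translation lemma and the finitely many piecewise checks. The delicate point — and the main obstacle — is the seam near $\sigma=\tfrac{277}{300}$, where one must confirm that the bound coming out of Theorem~\ref{mu_est_thm} and the Heath--Brown-pair bound dovetail without either poking above $g$, and, crucially, that across \emph{both} ranges the unique binding constraint is the tangency at $\sigma=\tfrac9{10}$. The nearest competitors (for instance the breakpoint $\sigma=\tfrac{249}{280}$, an endpoint of the same linear piece, and the junction at $\sigma=\tfrac{277}{300}$) already leave only a minute gap, so the bookkeeping must be arranged carefully enough that this one tangency stands out rather than being lost among near-equalities.
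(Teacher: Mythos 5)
Your proposal follows essentially the same route as the paper: translate the Heath--Brown pairs $(p_m,q_m)$ into bounds at the corners $(\sigma_m,p_m)$ with $\sigma_m=q_m-p_m$, linearly interpolate via convexity of $\mu$, and identify the tangency at $\sigma=\tfrac{9}{10}$ on the segment $[\sigma_5,\sigma_6]$ (slope $-3/13$, coinciding with the last piece of Theorem~\ref{mu_est_thm}) as the constraint that pins down $B=\tfrac{2}{13}\sqrt{10}$. The paper's proof computes the interior maximiser $\lambda_n$ of $f(n,\lambda)$ explicitly and shows $f(n,\lambda_n)=\tfrac{2}{3^{3/2}}\tfrac{n^{1/2}(n+1)^{3/2}}{n^2+1}$ is decreasing for $n\ge 5$, so that the global max is $f(5,\lambda_5)=2\sqrt{10}/13$; you identify the same maximiser by imposing tangency of $g$ to the linear piece, which is equivalent.

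One step is stated incorrectly, though. For the Heath--Brown range $\sigma\ge\sigma_6=277/300$ you claim the ratio $\phi(\sigma)/(1-\sigma)^{3/2}$ never exceeds the corner value $\max_{m\ge 6}p_m/\delta_m^{3/2}=p_6/\delta_6^{3/2}\approx0.4710$. That cannot be right as stated: since $(1-\sigma)^{3/2}$ is convex in $\sigma$, on each linear piece $[\sigma_n,\sigma_{n+1}]$ the ratio attains its maximum at an \emph{interior} stationary point, not at a corner --- exactly the observation you yourself invoke a few lines later for the pieces coming from Theorem~\ref{mu_est_thm}. Concretely, the interior maximum on $[\sigma_6,\sigma_7]$ is $f(6,\lambda_6)=\tfrac{2}{3^{3/2}}\tfrac{\sqrt{6}\,7^{3/2}}{37}\approx0.4719$, which is strictly larger than the corner value $0.4710$. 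The slip is harmless here because $0.4719$ is still comfortably below $2\sqrt{10}/13\approx0.4865$, so your conclusion that $g$ dominates on this range stands; but the argument should appeal to the interior stationary-point check uniformly across all pieces (as the paper does by exhibiting the formula for $f(n,\lambda_n)$ and showing it is decreasing), not only on the pieces below $\sigma_6$.
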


As remarked in \cite{heathbrown_new_2017} and proved in \cite{bellotti_generalised_2023}, by restricting the range of $\sigma$ to be sufficiently close to 1, one can take $B = 2/3^{3/2} + \delta$ for any $\delta > 0$. We make this explicit when $\sigma$ is very close to 1. 
\begin{theorem}\label{zeta_bound_thm1}
We have 
\[
\mu(\sigma) \le \frac{2}{3^{3/2}}(1 - \sigma)^{3/2} + \frac{103}{300}(1 - \sigma)^{2},\qquad \frac{117955}{118272} \le \sigma \le 1.
\]
\end{theorem}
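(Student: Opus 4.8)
The plan is to combine the exponent pairs $(p_m,q_m)$ of \eqref{hb_exponent_pair} with convexity of the set of exponent pairs (Rankin), and the classical passage from an exponent pair to a bound for $\mu(\sigma)$ near $\sigma=1$. Recall --- via the approximate functional equation, exactly as in the proof of Theorem \ref{zeta_bound_thm} --- that if $(k,\ell)$ is an exponent pair then
\[
\mu(\sigma)\le\frac{k(1-\sigma)}{1+k-\ell}\qquad\text{whenever }\tfrac12\le\ell-k\le\sigma:
\]
the right-hand side is simply the value at the balance point $N=t^{k/(1+k-\ell)}$ of the pointwise estimate $\min\!\big(N^{1-\sigma},\,t^{k}N^{\ell-k-\sigma}\big)$ for a dyadic block $\sum_{N<n\le2N}n^{-\sigma-it}$, the first term being the trivial bound and the second the exponent-pair bound (which is decreasing in $N$ precisely because $\sigma\ge\ell-k$).

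Feeding in $(k,\ell)=(p_m,q_m)$ --- the $\varepsilon$ in \eqref{hb_exponent_pair} is harmless since $\mu(\sigma)$ is an infimum, and I would let it tend to $0$ --- a short computation gives $1+p_m-q_m=\tfrac{3m^{2}-3m+2}{m(m-1)^{2}(m+2)}$, whence
\[
\mu(\sigma)\le\frac{2m(1-\sigma)}{3m^{2}-3m+2},\qquad \sigma\ge\sigma_m:=q_m-p_m=1-\frac{3m^{2}-3m+2}{m(m-1)^{2}(m+2)},
\]
and in particular $\mu(\sigma_m)\le p_m$. One checks that $\sigma_m$ increases to $1$ and that $\sigma_{33}=117955/118272$, so every $\sigma$ in the stated range lies in an interval $[\sigma_m,\sigma_{m+1}]$ with $m\ge33$ (the endpoint $\sigma=1$ is trivial, $\mu(1)=0$). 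Now invoke convexity: for $\sigma\in[\sigma_m,\sigma_{m+1}]$ the point $\lambda(p_m,q_m)+(1-\lambda)(p_{m+1},q_{m+1})$ with $\lambda=\tfrac{\sigma_{m+1}-\sigma}{\sigma_{m+1}-\sigma_m}\in[0,1]$ is an exponent pair whose coordinates satisfy $\ell-k=\lambda\sigma_m+(1-\lambda)\sigma_{m+1}=\sigma$, so the displayed bound collapses to $\mu(\sigma)\le k=L_m(\sigma)$, where $L_m$ is the linear interpolant through $(\sigma_m,p_m)$ and $(\sigma_{m+1},p_{m+1})$. It therefore suffices to prove
\[
L_m(\sigma)\le g(\sigma):=\frac{2}{3^{3/2}}(1-\sigma)^{3/2}+\frac{103}{300}(1-\sigma)^{2}\qquad(\sigma\in[\sigma_m,\sigma_{m+1}],\ m\ge33).
\]

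To verify this I would work in the variable $x=1-\sigma$, in which $g$ is convex and $L_m$ is affine; hence $L_m-g$ is concave on the relevant interval and its maximum there is attained at the unique point where $g'$ equals the slope of $L_m$, reducing the claim to one explicit estimate per integer $m\ge33$. The mechanism is that the nodes obey
\[
p_m=\frac{2}{3^{3/2}}(1-\sigma_m)^{3/2}+\frac13(1-\sigma_m)^{2}+O\!\big((1-\sigma_m)^{5/2}\big),
\]
so $g(\sigma_m)-p_m\ge\big(\tfrac{103}{300}-\tfrac13-o(1)\big)(1-\sigma_m)^{2}=\big(\tfrac1{100}-o(1)\big)(1-\sigma_m)^{2}>0$; that is, $g$ beats $L_m$ at the endpoints by a margin of order $(1-\sigma)^{2}$, while a chord of the convex node curve overshoots it by only $O((1-\sigma)^{5/2})$ across an interval of length $\asymp(1-\sigma)^{3/2}$, and $(1-\sigma)^{2}$ dominates $(1-\sigma)^{5/2}$ for $\sigma$ near $1$. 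Concretely I would run the $1/m$-expansion with effective error terms to cover all $m\ge M_0$ and confirm the finitely many remaining $33\le m<M_0$ by direct computation (equivalently, verify the reduced inequality by interval arithmetic for every real $m\ge33$).

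The hard part is exactly this last step: one must control the error terms in the expansions of $p_m$, $\sigma_m$ and of the maximiser of $L_m-g$ sharply enough to certify the inequality for every integer $m\ge33$. The constant $\tfrac{103}{300}$ is only just above the second-order coefficient $\tfrac13$, and the starting index $33$ (hence the cut-off $\sigma\ge117955/118272$) is calibrated so that the $\Theta((1-\sigma)^{2})$ surplus at the nodes dominates the $\Theta((1-\sigma)^{5/2})$ loss from linear interpolation together with all lower-order corrections. Everything else --- the exponent-pair-to-$\mu$ conversion, the convexity step, and the disappearance of the $\varepsilon$'s into the infimum defining $\mu(\sigma)$ --- is routine; I note that convexity could even be dispensed with by using $(p_m,q_m)$ for the largest admissible $m$, at the cost of a larger, but still $O((1-\sigma)^2)$, overshoot that the constant $\tfrac{103}{300}$ still absorbs.
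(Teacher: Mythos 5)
Your proposal is sound and uses the same skeleton as the paper: the same nodes $(\sigma_m,p_m)$ coming from \eqref{hb_exponent_pair}, the same linear interpolant $L_m$ between them, and the same threshold $\sigma_{33}=117955/118272$. Two minor differences in the scaffolding are harmless: you obtain $L_m$ via convexity of the exponent-pair set together with the balanced bound $\mu(\sigma)\le k(1-\sigma)/(1+k-\ell)$ for $\sigma\ge\ell-k$, whereas the paper obtains the same interpolant by applying \eqref{upper_bound_zeta_temp} only at $\sigma_n=q_n-p_n$ and then invoking the classical convexity of $\mu(\sigma)$. Where you genuinely diverge is in the final verification. You propose comparing $L_m$ directly with $g(\sigma)=\tfrac{2}{3^{3/2}}(1-\sigma)^{3/2}+\tfrac{103}{300}(1-\sigma)^2$, which by concavity of $L_m-g$ reduces to an explicit critical-point estimate for every $m\ge33$; you then sketch an asymptotic expansion to handle large $m$ and propose direct computation for a finite range. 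The paper instead reuses what Theorem~\ref{zeta_bound_thm} already established: on $[\sigma_n,\sigma_{n+1}]$ one has $L_n(\sigma)\le\tfrac{2}{3^{3/2}}\tfrac{n^{1/2}(n+1)^{3/2}}{n^2+1}(1-\sigma)^{3/2}$, so it suffices to verify the \emph{single-variable} inequality
\[
\frac{2}{3^{3/2}}\frac{n^{1/2}(n+1)^{3/2}}{n^2+1}\le\frac{2}{3^{3/2}}+\frac{103}{300}(1-\sigma_{n+1})^{1/2},\qquad n\ge33,
\]
and then use $(1-\sigma_{n+1})^{1/2}\le(1-\sigma)^{1/2}$. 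This collapses your two-parameter critical-point analysis (over $m$ and $\sigma$) into one elementary inequality in $n$; the mild slack it introduces (the maximizer of $L_n/(1-\sigma)^{3/2}$ sits strictly inside $[\sigma_n,\sigma_{n+1}]$, not at $\sigma_{n+1}$) is exactly what the margin $\tfrac{103}{300}-\tfrac13=\tfrac{1}{100}$ absorbs. Your route would work, but you have left the hard estimate as a plan rather than a proof, and you would find the paper's factorisation through Theorem~\ref{zeta_bound_thm} substantially easier to certify.
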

However, for certain applications we require a uniform bound on the entire range $1/2 \le \sigma \le 1$. One example is the zero-density estimate Corollary \ref{zero_density_bound_thm} in the next section.

\subsection{Zero-density estimates for $\zeta(s)$}
Let $N(\sigma, T)$ denote the number of zeroes of $\zeta(s)$ in the rectangle 
\[
\sigma \le \Re s \le 1,\qquad 0 < \Im s \le T.
\]
A zero-density estimate is a bound on $N(\sigma, T)$ as $T \to \infty$ that holds uniformly for some range of $\sigma$. Results in zero-density estimates are in part motivated by their implications for prime number distributions in short intervals. The well-known density hypothesis that $N(\sigma, T) \ll_{\varepsilon} T^{2(1 - \sigma) + \varepsilon}$ for $1/2 \le \sigma \le 1$ implies an asymptotic formula for the number of primes in $(x, x + O(x^{1/2 + \varepsilon})]$ for any $\varepsilon > 0$. Currently, the density hypothesis is known to hold for $\sigma \ge 25/32$, and many zero-density bounds are known to hold for various ranges of $\sigma$. In Table \ref{zero_density_estimates_table} we record some results of the form 
\[
N(\sigma, T) \ll_{\varepsilon} T^{A(\sigma)(1 - \sigma) + \varepsilon},
\]
for some functions $A(\sigma)$, which, to the best of the authors' knowledge, represent the sharpest known published zero-density estimates for $\zeta(s)$. 
\begin{table}[h]
\def\arraystretch{1.3}
\centering
\caption{Zero-density estimates of the form $N(\sigma, T) \ll_{\varepsilon} T^{A(\sigma)(1 - \sigma) + \varepsilon}$}
\begin{tabular}{|c|c|c|}
\hline
$A(\sigma)$ & Range & Reference\\
\hline
$\dfrac{1 - (8/7 - \delta)(\sigma - 1/2)}{1 - \sigma}$
& \begin{tabular}{@{}c@{}}$\frac{1}{2} \le \sigma \le \frac{1}{2} + o(1)$\\(for any $\delta > 0$)\end{tabular} & Conrey \cite{conrey_at_1989}\\
\hline
$3/(2 - \sigma)$ & $\frac{1}{2} + o(1) < \sigma \le \frac{3}{4} = 0.75$ & Ingham \cite{ingham_estimation_1940}\\
\hline
$3/(7\sigma - 4)$ & $\frac{3}{4} < \sigma < \frac{13}{17} = 0.7647\ldots$ & \multirow{2}{*}{Ivi\'{c} \cite[Ch.\ 11]{ivic_riemann_2003}} \\
\cline{1-2}
$6/(5\sigma - 1)$ & $\frac{13}{17} \le \sigma < \frac{25}{32} = 0.78125$ & \\
\hline
$2$ & $\frac{25}{32} \le \sigma \le \frac{11}{14} = 0.7857\ldots$ & Bourgain \cite{bourgain_large_2000} \\
\hline 
$9/(7\sigma - 1)$ & $\frac{11}{14} < \sigma < \frac{3831}{4791} = 0.7996\ldots$ & Heath-Brown \cite{heathbrown_zero_1979} \\
\hline 
$3/(2\sigma)$ & $\frac{3831}{4791} \le \sigma < \frac{7}{8} = 0.875$ & Ivi\'{c} \cite{ivic_exponent_1980} \\
\hline 
$3/(10\sigma - 7)$ & $\frac{7}{8} \le \sigma < \frac{279}{314} = 0.8885\ldots$ & Heath-Brown \cite{heathbrown_zero_1979}\\
\hline 
\multirow{2}{*}{$24/(30\sigma - 11)$} & $\frac{279}{314} \le \sigma < \frac{155}{174} = 0.8908\ldots$ & \begin{tabular}{@{}c@{}}Chen--Debruyne--Vindas\\\cite{chen_density_2024}\end{tabular} \\
\cline{2-3}
& $\frac{155}{174} \le \sigma \le \frac{49}{54} = 0.9074\ldots$ & Ivi\'{c} \cite{ivic_exponent_1980}\\
\hline 
$2/(7\sigma - 5)$ & $\frac{49}{54} < \sigma \le \frac{15}{16} = 0.9375$ & \multirow{2}{*}{Bourgain \cite{bourgain_remarks_1995}}\\
\cline{1-2}
$4/(30\sigma - 25)$ & $\frac{15}{16} < \sigma \le \frac{23}{24} = 0.9583\ldots$ & \\
\hline 
$3/(24\sigma - 20)$ & $\frac{23}{24} < \sigma < \frac{39}{40} = 0.975$ & \multirow{4}{*}{Pintz \cite{pintz_density_2023}}\\
\cline{1-2}
$2/(15\sigma - 12)$ & $\frac{39}{40} \leq \sigma < \frac{41}{42} = 0.9761\ldots$ & \\
\cline{1-2}
$3/(40 \sigma - 35)$ & $\frac{41}{42} \leq \sigma < \frac{59}{60} = 0.9833\ldots$ & \\
\cline{1-2}
$\dfrac{3}{n(1 - 2(n - 1)(1 - \sigma))}$ & 
\begin{tabular}{@{}c@{}}$1 - \frac{1}{2n(n - 1)} \le \sigma < 1 - \frac{1}{2n(n + 1)}$\\(for integer $n \ge 6$)\end{tabular} & \\
\hline 
\end{tabular}
\label{zero_density_estimates_table}
\end{table}
We also note the currently unpublished work of Kerr \cite{kerr_large_2019} who improved Table \ref{zero_density_estimates_table} in the following ranges
\[
A(\sigma) \le \begin{cases}
36/(138\sigma - 89),& 41/54 \le \sigma < 13/17,\\
3/(2\sigma),& 23/29 \le \sigma < 3831/4791,
\end{cases}
\]
as well as the recent breakthrough work of Guth and Maynard \cite{guth_large_2024}, who established
\[
A(\sigma) \le 15/(3 + 5\sigma),\qquad 1/2 \le \sigma \le 1.
\]

Estimates in Table \ref{zero_density_estimates_table} employ a wide range of different techniques. Historically, estimates close to $\sigma = 1$ relied on bounds on $\zeta(s)$ in the critical strip and a theorem due to Montgomery \cite{montgomery_topics_1971}. The next two corollaries use a similar approach. We note that both results are weaker than those in Bourgain \cite{bourgain_remarks_1995} and Pintz \cite{pintz_density_2023}, so the interest in them lie solely in the optimisation method used to obtain the estimates. 

\begin{corollary}\label{zero_density_bound_thm1}
For any $\varepsilon > 0$, we have $N(\sigma, T) \ll_{\varepsilon} T^{A(\sigma)(1 - \sigma) + \varepsilon}$, where 
\[
A(\sigma) = \begin{cases}
\vspace{1.5mm}
\displaystyle\frac{715(15357 \sigma - 12359)}{(5119 \sigma - 3620)(10238 \sigma - 8739)},&\dfrac{9}{10} \le \sigma < \sigma_1 = 0.9573\ldots\\
\vspace{1.5mm}
\displaystyle\frac{75872(103692 \sigma - 86773)}{5(69128\sigma - 52209) (138256\sigma - 121337)},&\sigma_1 \le \sigma < \sigma_2 = 0.9621\ldots\\
\vspace{1.5mm}
\displaystyle\frac{288(2604 \sigma - 2257)}{(1736 \sigma - 1389)(3472 \sigma - 3125)},&\sigma_2 \le \sigma < \sigma_3 = 0.9644\ldots\\
\vspace{1.5mm}
\displaystyle\frac{22232(244872 \sigma - 216389)}{(163248 \sigma - 134765)(326496 \sigma - 298013)},&\sigma_3 \le \sigma < \sigma_4 = 0.9669\ldots\\
\displaystyle\frac{2860(32859 \sigma - 29146)}{(21906 \sigma - 18193)(43812 \sigma - 40099)},&\sigma_4 \le \sigma \le 1.
\end{cases}
\]
\end{corollary}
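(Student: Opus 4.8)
The plan is to derive Corollary~\ref{zero_density_bound_thm1} from Theorem~\ref{mu_est_thm} by feeding its subconvexity bounds into a classical zero-detection argument of Montgomery~\cite{montgomery_topics_1971}. Montgomery's method estimates $N(\sigma,T)$ from pointwise bounds on $\zeta$ on vertical lines: one represents $\zeta$ near $\Re s = 1$ by a Dirichlet polynomial, detects the zeros in the rectangle via a large-values inequality, and the step that shifts the line of approximation introduces a factor governed by $\mu(\cdot)$ at an auxiliary abscissa $\sigma'$ which one is free to choose in some window depending on $\sigma$. This produces, for each admissible $\sigma'$, an estimate of the schematic form
\[
N(\sigma,T)\ll_\varepsilon T^{\,(1-\sigma)\,G(\sigma,\sigma',\mu(\sigma'))+\varepsilon},
\]
where $G$ is an explicit rational function (in practice the larger of two such contributions, coming from the two cases of the large-values step), so that $A(\sigma)=\inf_{\sigma'} G(\sigma,\sigma',\mu(\sigma'))$ once the best available bound for $\mu(\sigma')$ is substituted.

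First I would fix the precise Montgomery inequality and identify the window of relevant $\sigma'$ when $\sigma\in[9/10,1]$; this window lies in a range on which Theorem~\ref{mu_est_thm} provides $\mu(\sigma')$ as an explicit \emph{affine} function of $\sigma'$, so the optimisation over $\sigma'$ reduces to minimising a rational function over finitely many affine pieces. On each piece the optimal $\sigma'=\sigma'(\sigma)$ is an explicit rational function of $\sigma$ --- attained either at an endpoint of the piece, namely a $\mu$-breakpoint of Theorem~\ref{mu_est_thm} such as $3620/5119$, $52209/69128$, $1389/1736$, $134765/163248$, $18193/21906$, or at the value balancing the two competing contributions --- and substituting back gives a rational formula for $A(\sigma)$ valid until $\sigma'(\sigma)$ crosses the next $\mu$-breakpoint, equivalently until a different exponent pair (a different vertex of $H$) becomes optimal. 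The abscissae at which the active piece changes should work out to be exactly $\sigma_1=0.9573\ldots,\ \sigma_2=0.9621\ldots,\ \sigma_3=0.9644\ldots,\ \sigma_4=0.9669\ldots$, and reading off the active piece on each of the five resulting subintervals yields the five displayed formulas for $A(\sigma)$.

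The main work --- routine but intricate --- is the verification rather than the idea: one must check that consecutive pieces of $A$ agree at the transition abscissae $\sigma_i$ (so that the bound is genuinely uniform on $[9/10,1]$), that no other $\mu$-breakpoint and no interior critical point of $G$ does better on this range, and that the $\varepsilon$-losses and logarithmic factors in Montgomery's theorem are absorbed into $T^\varepsilon$. This is a finite rational-function optimisation of precisely the kind carried out numerically in \S\ref{sec:applications}, and in practice the explicit $A(\sigma)$ is most easily obtained by running that optimisation (the inputs $\mu(\sigma')$ being themselves the optima over the convex hull $H$ underlying Theorem~\ref{mu_est_thm}) and then recognising which vertices of $H$ are in play. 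Since, as already noted, the resulting estimate is in any case superseded by \cite{bourgain_remarks_1995} and \cite{pintz_density_2023}, there is no need to push the optimisation further or to sharpen the implied constants.
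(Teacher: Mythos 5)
Your proposal is correct and follows essentially the same route as the paper: feed Lemma~\ref{monty_lemma_12_3} (Montgomery) with a pointwise bound $M(\alpha,8T)\ll_\varepsilon T^{\mu(\alpha)+\varepsilon}$ and optimize the resulting rational exponent over the auxiliary abscissa $\alpha$. The only cosmetic difference is that you parametrize by $\sigma'$ and $\mu(\sigma')$ as packaged in Theorem~\ref{mu_est_thm}, whereas the paper optimizes directly over $(k,\ell)\in H$ with $\alpha=\ell-k$ and $M(\alpha,8T)\ll T^{k+\varepsilon}$ --- an equivalent 1-parameter reduction (the objective is increasing in $k$ for fixed $\ell-k$) which the paper itself flags as interchangeable --- and in either formulation the optimum lands on the vertices $(k_4,\ell_4),\dots,(k_8,\ell_8)$ of $H$, giving the five displayed pieces with crossover abscissae $\sigma_1,\dots,\sigma_4$.
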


\begin{corollary}\label{zero_density_bound_thm}
Uniformly for $9/10 \le \sigma \le 1$, we have 
\[
N(\sigma, T) \ll_{\varepsilon} T^{6.346(1 - \sigma)^{3/2} + \varepsilon}
\]
for any $\varepsilon > 0$.
\end{corollary}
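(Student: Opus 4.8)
The plan is to run the same Montgomery-type zero-detection argument that underlies Corollary~\ref{zero_density_bound_thm1}, but feeding in the uniform cube-root bound of Theorem~\ref{zeta_bound_thm} in place of the piecewise-linear bounds of Theorem~\ref{mu_est_thm}. Recall the mechanism: a zero $\rho=\beta+i\gamma$ of $\zeta$ with $\beta\ge\sigma$ is detected by the largeness of a Dirichlet polynomial $\sum_{n\le X}a_n n^{-\rho}$ coming from an approximate formula for $\zeta$ (or $\zeta^{-1}$), and the number of well-spaced such zeros is then bounded by a mean-value estimate for this polynomial along a vertical line. That mean-value estimate introduces a factor governed by $\mu(\alpha)$ at an auxiliary abscissa $\alpha$ which (equivalently, $X$) one is free to choose. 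As in the proof of Corollary~\ref{zero_density_bound_thm1}, the output is a bound
\[
N(\sigma,T)\ll_\varepsilon T^{A(\sigma)(1-\sigma)+\varepsilon},
\]
where $A(\sigma)$ is an explicit function of $\sigma$, $\alpha$, and any admissible value for $\mu(\alpha)$, to be minimised over $\alpha\in[1/2,\sigma)$.

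Next I would substitute $\mu(\alpha)\le \tfrac{2}{13}\sqrt{10}\,(1-\alpha)^{3/2}$ from Theorem~\ref{zeta_bound_thm}, which is valid on all of $[1/2,1]$, turning the minimisation into a one-parameter problem. Writing $\alpha=1-\lambda(1-\sigma)$ with a free parameter $\lambda\ge 1$, the constraint $\alpha\ge 1/2$ only forces $\lambda\le 1/(2(1-\sigma))$, which for $9/10\le\sigma\le1$ is at least $5$ and hence comfortably above the optimal $\lambda$. Up to terms of strictly lower order in $1-\sigma$, the quantity $A(\sigma)(1-\sigma)$ then factors as $(1-\sigma)^{3/2}$ times a function of $\lambda$ alone; minimising that function over $\lambda$ produces an explicit numerical constant, which I would verify is at most $6.346$ with a little slack, uniformly for $9/10\le\sigma\le1$. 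Feeding the optimal $\lambda$ back, and checking that the corresponding $\alpha$ indeed lies in $[1/2,1]$ so that Theorem~\ref{zeta_bound_thm} applies, yields $N(\sigma,T)\ll_\varepsilon T^{6.346(1-\sigma)^{3/2}+\varepsilon}$ on the stated range.

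The conceptual content is entirely in Theorem~\ref{zeta_bound_thm}; the remaining work is bookkeeping, and the one genuinely delicate point is uniformity. One must confirm that a single constant works for \emph{every} $\sigma\in[9/10,1]$ -- in particular that the higher-order $(1-\sigma)^{j}$ terms ($j>3/2$) discarded in the scaling heuristic, together with the $\varepsilon$-losses and logarithmic factors inherent in Montgomery's theorem \cite{montgomery_topics_1971}, are all absorbed into the gap between the true optimal constant and the value $6.346$. Since $\sigma$ ranges over a compact interval bounded away from $1/2$, this amounts to checking the optimisation at finitely many points (or piece-by-piece, using the same decomposition as in Corollary~\ref{zero_density_bound_thm1}), rather than confronting any conceptual obstruction. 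This matches the paper's own framing that the interest of the corollary lies in the optimisation method, not in the strength of the resulting estimate.
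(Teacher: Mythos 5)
Your proposal matches the paper's proof in essence: use Montgomery's Lemma~\ref{monty_lemma_12_3} with the auxiliary abscissa $\alpha$ chosen proportionally to $1-\sigma$, substitute the uniform bound $\mu(\alpha)\le\tfrac{2}{13}\sqrt{10}\,(1-\alpha)^{3/2}$ from Theorem~\ref{zeta_bound_thm}, and minimise the resulting coefficient over the free proportionality parameter. Your parametrisation $\alpha = 1-\lambda(1-\sigma)$ is literally the paper's $\alpha = k\sigma - (k-1)$ with $\lambda = k$, and the optimal choice is $k\approx 4.928$, which lies in the admissible range for all $\sigma\in[9/10,1]$.

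One clarification is worth making, because it dissolves the ``one genuinely delicate point'' you flag. With this parametrisation the factorisation of the exponent is \emph{exact}, not merely asymptotic: $3\sigma-1-2\alpha = (2k-3)(1-\sigma)$, $2\sigma-1-\alpha = (k-2)(1-\sigma)$, $\sigma-\alpha = (k-1)(1-\sigma)$, and $1-\alpha = k(1-\sigma)$, so every $(1-\sigma)$ power cancels cleanly and Montgomery's exponent becomes
\[
\mu(\alpha)\cdot\frac{2(1-\sigma)(3\sigma-1-2\alpha)}{(2\sigma-1-\alpha)(\sigma-\alpha)}
\le B\,\frac{k^{3/2}(4k-6)}{k^2-3k+2}\,(1-\sigma)^{3/2},
\]
with $B=2\sqrt{10}/13$. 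There are no higher-order $(1-\sigma)^j$ terms to discard, and the $\log^5 T$, $\log^8 T$ and $\varepsilon$-losses are swallowed into the exponent $\varepsilon$ in the standard way. Consequently the uniformity over $\sigma\in[9/10,1]$ is automatic once one checks $\alpha=1-k(1-\sigma)\in[1/2,1]$ and $\sigma\ge(\alpha+1)/2$, i.e.\ $2\le k\le 1/(2(1-\sigma))$, which the optimal $k\approx 4.928$ satisfies throughout the range. Minimising $k^{3/2}(4k-6)/(k^2-3k+2)$ gives the constant $\le 6.3453 < 6.346$. So the approach is right, but the worry about uniformity and discarded lower-order terms is a red herring once the exact cancellation is noted.
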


\subsection{The generalised Dirichlet divisor problem}
For integer $k \ge 2$ and $n \ge 1$, if
\[
d_k(n) := \sum_{n_1n_2\cdots n_k = n}1
\]
denotes the $k$-fold divisor function, then the generalised divisor problem concerns bounding the quantity
\[
\Delta_k(x) := \sum_{n \le x}d_k(n) - \underset{s = 1}{\text{Res}}\left(\zeta^k(s)\frac{x^{s}}{s}\right) = \sum_{n \le x}d_k(n) - x P_{k - 1}(\log x),
\]
where $P_{k - 1}$ is a certain polynomial of degree $k - 1$. The conjectured order $\Delta_k(x)$ is $\Delta_k(x) \ll_{\varepsilon} x^{1/2 - 1/(2k) + \varepsilon}$ for any $\varepsilon > 0$, and indeed it is known that $\Delta_k(x) = \Omega(x^{1/2 - 1/(2k)})$ \cite{hardy_dirichlets_1917, szeg_uber_1927, szeg_uber_1927-1}. This problem has been studied extensively, see for example \cite{voronoi_sur_1903, hardy_approximate_1923, titchmarsh_theory_1986, richert_einfuhrung_1960, karacuba_uniform_1972, heathbrown_mean_1981, kolesnik_estimation_1981, ivic_dirichlet_1989, heathbrown_new_2017, bellotti_generalised_2023}. It is known that for $k \ge 4$ and any $\varepsilon > 0$, we have   
\[
\Delta_k(x) \ll_{\varepsilon} x^{\alpha_k + \varepsilon}
\]
for some $\alpha_k$ satisfying
\[
\alpha_k \le \frac{3k - 4}{4k},\qquad (4 \le k \le 8),
\]
\begin{alignat*}{4}
\alpha_9 &\le 35/54,\qquad &&\alpha_{10}\le 27/40,\qquad &&\alpha_{11} \le 0.6957, \qquad &&\alpha_{12} \le 0.7130,\\
\alpha_{13} &\le 0.7306,\qquad &&\alpha_{14}\le 0.7461,\qquad &&\alpha_{15} \le 0.75851,\qquad &&\alpha_{16} \le 0.7691,\\
\alpha_{17} &\le 0.7785,\qquad &&\alpha_{18}\le 0.7868,\qquad &&\alpha_{19} \le 0.7942,\qquad &&\alpha_{20} \le 0.8009,
\end{alignat*}
\[
\alpha_{k} \le 1 - \frac{4}{k + 2},\quad (21 \le k \le 25),\qquad \alpha_k \le 1 - \frac{5}{k + 4},\quad (26 \le k \le 29),
\]
\[
\alpha_k \le 1 - 1.224(k - 8.37)^{-2/3},\qquad (k \ge 30).
\]
The results for $4 \le k \le 8$ are due to \cite{heathbrown_mean_1981}; $k = 9$ is due to \cite[Ch.\ 13]{ivic_riemann_2003}; $10 \le k \le 20$ are due to \cite{ivic_dirichlet_1989} and $21 \le k \le 29$ are due to \cite[Ch.\ 13]{ivic_riemann_2003}. The result for $k \ge 30$ is due to \cite{bellotti_generalised_2023}, who also showed that $\alpha_k \le 1 - 1.889k^{-2/3}$ for sufficiently large $k$. 

Such estimates are obtained via lower bounds on $m(\sigma)$, defined (for each fixed $\sigma$) as the supremum of all numbers $m \ge 4$ for which 
\begin{equation}\label{m_sigma_defn}
\int_1^T|\zeta(\sigma + it)|^m\text{d}t \ll_{\varepsilon} T^{1 + \varepsilon}
\end{equation}
for any $\varepsilon > 0$. The results for $9 \le k \le 29$ in particular depend on an estimate of $m(\sigma)$ developed in \cite[Ch.\ 13]{ivic_riemann_2003} and refined in \cite{ivic_dirichlet_1989}. As remarked in \cite{ivic_dirichlet_1989}, completely optimising the choice of exponent pair in this method requires manipulating unwieldy expressions. By approaching the problem as a constrained optimisation, we obtain the following new estimates for $\alpha_k$ ($9 \le k \le 21$).

\begin{theorem}\label{divisor_problem_thm}
We have 
\begin{align*}
\alpha_{ 9 } &\le 0.64720 , \quad
\alpha_{ 10 } \le 0.67173 , \quad
\alpha_{ 11 } \le 0.69156 , \quad
\alpha_{ 12 } \le 0.70818 , \quad\\
\alpha_{ 13 } &\le 0.72350 , \quad
\alpha_{ 14 } \le 0.73696 , \quad
\alpha_{ 15 } \le 0.74886 , \quad
\alpha_{ 16 } \le 0.75952 , \quad\\
\alpha_{ 17 } &\le 0.76920 , \quad
\alpha_{ 18 } \le 0.77792 , \quad
\alpha_{ 19 } \le 0.78581 , \quad
\alpha_{ 20 } \le 0.79297 , \quad\\
\alpha_{ 21 } &\le 0.79951.
\end{align*}
\end{theorem}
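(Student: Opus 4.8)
The plan is to reduce each bound on $\alpha_k$ to a lower bound on $m(\sigma)$ for a suitable $\sigma$, and then obtain that lower bound by optimising the choice of exponent pair within the convex hull $H$. The classical mechanism (see \cite[Ch.\ 13]{ivic_riemann_2003}, \cite{ivic_dirichlet_1989}) is that if $m(\sigma) \ge k$ for some $1/2 < \sigma < 1$ then $\Delta_k(x) \ll_\varepsilon x^{\alpha_k + \varepsilon}$ with $\alpha_k$ expressible in terms of $\sigma$ (roughly $\alpha_k = (k-1)/(2k) \cdot (\text{something involving }\sigma)$, or more precisely the formula in \cite{ivic_dirichlet_1989} that trades off the trivial bound against the mean-value estimate \eqref{m_sigma_defn}). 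So for each $k$ in the range $9 \le k \le 21$ I would first write down, following \cite{ivic_dirichlet_1989}, the explicit function $\sigma \mapsto \alpha_k(\sigma)$ that results from assuming $m(\sigma) \ge k$, and identify the value $\sigma = \sigma_k$ that minimises it subject to the constraint that the required lower bound $m(\sigma_k) \ge k$ is actually attainable.

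The second and main step is to establish the needed lower bounds $m(\sigma) \ge k$. For this I would use the standard deduction (e.g.\ \cite[Thm.\ 8.3]{ivic_riemann_2003}, as used in Ivi\'c's work) that converts a bound $\mu(\sigma') \le \mu$ together with a known moment estimate into a lower bound on $m(\sigma)$; concretely, the inequality \eqref{m_sigma_defn} holds for
\[
m \le \frac{4(1 - \sigma')}{\mu(\sigma')} \quad\text{(schematically)},
\]
or the sharper hybrid version combining the fourth-moment estimate with a bound on $\mu$. The key point is that $\mu(\sigma')$ is controlled by an exponent pair: if $(k_0,\ell_0) \in H$ then $\mu(\sigma') \le (k_0 + \ell_0 - \sigma')/2$ for the associated $\sigma' = (k_0+\ell_0)/2 \cdot(\text{appropriate relation})$, via the classical passage from \eqref{exp_pair_defn} to bounds on $\zeta$. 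Thus, for each target $k$, I recast "maximise $m(\sigma)$" as: maximise over $(k_0,\ell_0) \in H$ the resulting value of $m$, a constrained optimisation over the explicitly-described polygon from \eqref{HN_vertices}. By Theorem \ref{reg_thm_1} the feasible region is exactly the interior of $H$, so the optimum is attained on the boundary and can be computed by the numerical program described at the start of \S\ref{sec:applications}; since the optima will lie away from the extremities $(0,1)$ and $(1/2,1/2)$, the finite polygon approximation loses nothing.

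Finally I would assemble the thirteen numerical values: for each $k$, feed the optimal exponent pair into the $\alpha_k(\sigma_k)$ formula and verify the stated decimal bound, checking that rounding is in the safe (upward) direction. The one subtlety worth flagging is matching conventions: Ivi\'c's estimate for $m(\sigma)$ in \cite{ivic_dirichlet_1989} is itself piecewise and already incorporates some exponent-pair optimisation over a restricted family, so care is needed to ensure we are genuinely improving it rather than re-deriving it — the improvement comes precisely from allowing all of $H$ (including the pairs \eqref{BI_exp_pair}, \eqref{new_exp_pairs_4} and their van der Corput iterates) rather than the classical pairs only. This bookkeeping, rather than any single hard inequality, is the main obstacle; once the reduction is set up correctly the rest is a finite computation.
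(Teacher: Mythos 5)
Your plan correctly identifies the outer shell of the argument: reduce to lower bounds on $m(\sigma)$ (using the standard fact that $m(\sigma)\ge n$ gives $\Delta_n(x)\ll_\varepsilon x^{\sigma+\varepsilon}$, so $\alpha_n\le\sigma$ — your ``roughly $\alpha_k=(k-1)/(2k)\cdot(\ldots)$'' is garbled, the relation is simply $\alpha_n\le\sigma$), then cast the choice of exponent pair as a constrained optimisation over $H$, and let the numerics finish. Where your sketch breaks down is in the middle: you never pin down the mechanism that actually produces the lower bound $m(\sigma)\ge n$ from an exponent pair, and the ``schematic'' inequality $m\le 4(1-\sigma')/\mu(\sigma')$ is not the estimate used and would not give these numbers.

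The paper's engine is a specific large-values estimate for $\zeta(\sigma+it)$ in the spirit of Ivi\'{c}'s Lemma 8.2 and the argument leading to his equation (8.97). One bounds the number $R$ of well-spaced ordinates with $|\zeta(\sigma+it_r)|\ge V$ by a three-term expression
\[
R \ll T^{\varepsilon}\Bigl(TV^{-2f(\sigma)} + T^{(4-4\sigma)/(1+2\sigma)}V^{-12/(1+2\sigma)} + T^{4(1-\sigma)(k+\ell)/((2+4\ell)\sigma-1+2k-2\ell)}V^{-4(1+2k+2\ell)/((2+4\ell)\sigma-1+2k-2\ell)}\Bigr),
\]
in which the exponent pair appears explicitly. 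The exponent pair enters a \emph{second} time through the subconvexity input $V\ll T^{c(\sigma)}$ with $c(\sigma)=\mu(\sigma)$ taken from Theorem \ref{mu_est_thm} (itself an optimisation over $H$), and through the auxiliary parameter $\theta=\theta(\sigma)$ defined by $2c(\theta)+1+\theta-2(1+c(\theta))\sigma=0$, which forces a piecewise analysis tied to the piecewise structure of $c$. Converting the last two terms above into bounds $R\ll T^{1+\varepsilon}V^{-A_i(\sigma)}$ for explicit $A_1(\sigma),A_2(\sigma)$, the optimisation that actually has to be solved is the max--min problem
\[
\max_{(k,\ell)\in H}\ \min\{A_1(\sigma),A_2(\sigma)\},
\]
for each fixed $\sigma$, and then one inverts the resulting piecewise lower bound for $m(\sigma)$ to read off $\alpha_n$. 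None of this structure is present in your proposal; you reduce everything to a single quotient involving $\mu$, which neither balances the fourth-moment term against the exponent-pair term nor encodes the dual role played by $(k,\ell)$. Without the correct large-values lemma you have no objective function to feed to the optimiser, so the ``finite computation'' you defer to cannot be set up. This is the missing key lemma, not mere bookkeeping.
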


\subsection{The number of primitive Pythagorean triangles}
For our last example we take a brief excursion into a different area, to demonstrate the breath of application of exponent pairs. Let $P(N)$ denote the number of primitive Pythagorean triangles with area no greater than $N$. It is known that 
\[
P(N) = cN^{1/2} - c'N^{1/3} + R(N)
\]
for some suitable constants $c$, $c' > 0$. The remainder term $R(N)$ was bounded to $O(N^{1/3})$ by Lambek and Moser \cite{lambek_distribution_1955}, to $O(N^{1/4}\log N)$ by Wild \cite{wild_number_1955} and finally to 
\[
R(N) \ll N^{1/4}\exp(-\gamma \sqrt{\log N})
\]
for some $\gamma > 0$ by Duttlinger and Schwarz \cite{duttlinger_uber_1980}. This remains the best published unconditional bound on $R(N)$. 

Under the Riemann Hypothesis, sharper bounds are possible. Menzer \cite{menzer_number_1986} has shown that 
\[
R(N) \ll N^{1703927/7513108 + \varepsilon} = N^{0.22679\ldots + \varepsilon}.
\]
We improve this estimate by showing that

\begin{theorem}\label{pythagorean_triple_thm}
Assuming the Riemann Hypothesis, for any $\varepsilon > 0$ we have
\begin{equation}
R(N) \ll_{\varepsilon} N^{71/316 + \varepsilon},
\end{equation}
where $71/316 = 0.22468\ldots$.
\end{theorem}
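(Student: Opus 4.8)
The plan is to revisit Menzer's argument \cite{menzer_number_1986}, replacing the particular exponent pair he employs by the optimal one extracted from the hull $H$ via Theorem \ref{reg_thm_1}. I would begin from the parametrisation of primitive Pythagorean triangles: each corresponds bijectively to a pair of coprime integers $m > n \ge 1$ of opposite parity, with area $mn(m^2 - n^2)$, so that $P(N)$ counts such pairs subject to $mn(m^2 - n^2) \le N$. Detecting the coprimality condition by M\"obius inversion and the parity condition by the identity $\mathbf 1[m \not\equiv n \bmod 2] = \tfrac12\bigl(1 - (-1)^{m+n}\bigr)$, one reduces $P(N)$ to a bounded combination of (weighted) lattice-point counting functions $A(x) = \#\{(m,n) : m > n \ge 1,\ mn(m^2 - n^2) \le x\}$ evaluated at arguments $x = N/d^4$, up to fixed rational constants, with $d$ running over odd integers; the fourth power reflects the scaling $(m,n) \mapsto (dm, dn)$.

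Next I would estimate $A(x)$ by the standard curve-summation (hyperbola) method. The boundary curve $mn(m^2 - n^2) = x$ has two qualitatively distinct parts: a ``bulk'' part where $m \gg n$, on which it is well approximated by the monomial $m \asymp (x/n)^{1/3}$ over $n \lesssim x^{1/4}$, and a ``diagonal'' part where $m \approx n$, on which, writing $t = m - n$, it reads $t \asymp x/n^3$ over $x^{1/4} \lesssim n \lesssim x^{1/3}$. On each part, and on each dyadic block $n \asymp M$, the count equals its expected value plus an error of the form $\sum_{n \asymp M}\psi(g(n))$, where $\psi(u) = u - \lfloor u\rfloor - \tfrac12$ and $g$ is a monomial-type phase to which the definition of an exponent pair \eqref{exp_pair_defn} applies after the usual truncated Fourier expansion of $\psi$ and summation over the dual variable. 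Thus each exponent pair $(k,\ell)$ yields a bound $A(x) = c_1 x^{1/2} + c_2 x^{1/3} + O_\varepsilon\bigl(x^{\varepsilon + \phi(k,\ell)}\bigr)$, where $\phi$ is a maximum of finitely many explicit fractional-linear functions of $(k,\ell)$ coming from the two regimes and from the optimal dual-sum length.

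Summing over $d$ then splits into a main contribution --- the sums $N^{1/2}\sum_d \mu(d)d^{-2}$ and $N^{1/3}\sum_d \mu(d)d^{-4/3}$, with suitable constant factors, which reproduce the asserted $cN^{1/2} - c'N^{1/3}$ --- together with an error $\sum_{d \le D}\mu(d)\,O(x^{\phi})$ and the M\"obius tail $\sum_{d > D}\mu(d)A(N/d^4)$. The Riemann Hypothesis enters precisely in controlling that tail: via $M(x) = \sum_{n \le x}\mu(n) \ll_\varepsilon x^{1/2+\varepsilon}$ (equivalently, control of $1/\zeta$ in the critical strip) and partial summation, the tail of the smooth part of $A$ becomes $\ll N^{1/2}D^{-3/2+\varepsilon}$, which is admissible once $D$ is chosen correctly, whereas unconditionally it would only be $\ll N^{1/4+\varepsilon}$ --- the obstruction responsible for the barrier at exponent $1/4$ in the unconditional results. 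Balancing the cut-off $D$ against $N^\varepsilon\sum_{d \le D}(N/d^4)^{\phi(k,\ell)}$ and then minimising the resulting exponent of $N$ over all $(k,\ell) \in H$ --- which by Theorem \ref{reg_thm_1} is exactly the available set of exponent pairs --- gives, after the constrained optimisation of \S\ref{sec:applications}, the exponent $71/316$.

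The main obstacle is the bookkeeping rather than any single hard estimate: one must reconstruct Menzer's decomposition of $A(x)$ carefully enough to pin down the precise family of functions $\phi$ (from the bulk regime, the diagonal regime, and the dual-sum truncation), and then check that the RH-dependent tail and the exponent-pair error do balance at the claimed optimum. Once $\phi$ is identified, the optimisation over $H$ is routine, and the improvement over Menzer's exponent stems entirely from optimising over the whole hull rather than over the restricted sub-family of exponent pairs available to him.
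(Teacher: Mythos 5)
Your proposal is correct and follows essentially the same route as the paper: both re-examine Menzer's argument with a general exponent pair $(k,\ell)$, use RH through $M(x)\ll_\varepsilon x^{1/2+\varepsilon}$ to handle the M\"obius tails of the $N^{1/2}$ and $N^{1/3}$ terms, balance those tails against the exponent-pair error $\sum_{d\le D}(N/d^4)^{(k+\ell)/2-1/4}$, and then optimise over $H$. The paper simply imports Menzer's intermediate three-term bound directly and observes that the balanced exponent is increasing in $k+\ell$, so the optimum over $H$ is Bourgain's pair $(13/84+\varepsilon,55/84+\varepsilon)$, yielding $71/316$; this last small reduction is the one explicit step your sketch leaves to the ``routine'' optimisation.
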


We close this section by noting that we have not intended to make an exhaustive list of the many applications of exponent pairs and to trace through the corresponding improvements. It is also worth noting that we have only focused on the theory of one-dimensional exponential sums, whereas many applications require consideration of multi-dimensional sums. Examples of such applications include the Piatetski-Shapiro prime number theorem \cite{pyatetskii_on_1953, rivat_prime_2001}, the number of semi-primes in short intervals \cite{wu_almost_2010} and the distribution of square-free numbers \cite{liu_distribution_2016}. Compared to their one-dimensional versions, multi-dimensional sums are substantially more difficult to treat. The articles of Srinivasan \cite{srinivasan_lattice_1963_1, srinivasan_lattice_1963_2, srinivasan_lattice_1965} develop a theory of multi-dimensional exponent pairs. We believe that the methods of this paper can be generalised to higher dimensions, given sufficient effort.

\section{Proof of Lemma \ref{new_exponent_pairs_lem}}
Let $\sigma > 0$ be fixed. We will show that, for each $(k, \ell)$ in the statement of Lemma~\ref{new_exponent_pairs_lem}, there exists $P$ sufficiently large and $c > 0$ sufficiently small, such that uniformly for $f \in \textbf{F}(N, P, \sigma, y, c)$, we have
\[
S = \sum_{n \in I}e(f(n)) \ll \left(\frac{y}{N^\sigma}\right)^k N^\ell, \qquad (y \ge N^\sigma),
\]
provided that $N$ and $y$ are sufficiently large. Throughout, we will only consider the case $I = (N, 2N]$, since the result may be generalised to intervals $I = (a, b] \subseteq (N, 2N]$ using the argument in Sargos \cite[p 310]{sargos_points_1995}. Let 
\[
T := yN^{1 - \sigma},\qquad \alpha := \frac{\log N}{\log T},\qquad F(u) := T^{-1} f(uN)
\]
so that 
\[
S = \sum_{N < n \le 2N}e\left(T F\left(\frac{n}{N}\right)\right).
\]
First, we record some bounds of the form $S \ll_{\varepsilon} T^{\beta(\alpha) + \varepsilon}$ for $\alpha$ in suitable ranges. To show that $(k + \varepsilon, \ell + \varepsilon)$ is an exponent pair, it suffices to show that $\beta(\alpha) \le k + (\ell - k)\alpha$ holds for $0 \le \alpha \le 1/2$. This is because the range $1/2 < \alpha \le 1$ may be handled analogously by first applying Poisson summation (see e.g.\ \cite[p 370]{huxley_area_1996}). There is no need to consider $\alpha > 1$ since $y < N^\sigma$ in this range.  

The exponent pair $(\frac{18}{199} + \varepsilon, \frac{593}{796} + \varepsilon)$ follows directly by taking $(p, q) = (\frac{13}{84} + \varepsilon, \frac{55}{84} + \varepsilon)$ in Sargos \cite[Thm.\ 7.1]{sargos_points_1995}, which implies
\begin{equation}\label{sargoes_exp_pair}
\beta(\alpha) \le \tfrac{18}{199} + \tfrac{521}{796}\alpha,\qquad \left(0 \le \alpha \le \tfrac{1}{2}\right).
\end{equation}
The other exponent pairs are generated by first computing the best known bound on $\beta(\alpha)$ for each $0 \le \alpha \le 1/2$, say $\beta_0(\alpha)$. This will be a piecewise-defined function. Then, we compute the minimal convex region $R \subset \mathbb{R}^2$ containing the points 
\begin{equation}\label{alpha_R_points}
\{(\alpha, \beta) : 0 \le \alpha \le \tfrac{1}{2}, 0 \le \beta \le \beta_0(\alpha) \}.
\end{equation}
Exponent pairs correspond to (non-trivial) tangent lines to this convex region. Intuitively, the set of exponent pairs is isomorphic to the dual of $R$. 

Table \ref{huxley_table_ranges} shows bounds of the form 
\[
\beta(\alpha) \le A + B\alpha,\qquad (X \le \alpha \le Y)
\]
which, to our knowledge, are the sharpest available bounds for each range of $\alpha$ (with the exception of the first range $0 \le \alpha \le \frac{2848}{12173}$, where sharper bounds are possible due to other exponent pairs, however this region does not affect the argument). In the application of the exponential sum estimates in Table~\ref{huxley_table_ranges}, care needs to be taken to ensure that each stated result holds uniformly for all $f \in \textbf{F}(N, P, \sigma, y, c)$. In the next few sections we verify that this is indeed the case for each bound in Table \ref{huxley_table_ranges}.
\begin{table}[h]
\def\arraystretch{1.3}
\centering
\caption{Bounds on $\beta(\alpha)$ of the form $\beta(\alpha) \le \beta_0(\alpha), \;(X \le \alpha \le Y)$}
\begin{tabular}{|c|c|c|c|c|}
\hline
$\beta_0(\alpha)$ & $X$ & $Y$ & Reference\\
\hline
$ \frac{13}{414} + \frac{359}{414} \alpha $ & $ 0 $ & $ \frac{2848}{12173} = 0.2339\ldots $ & Exponent pair $A^2(\frac{13}{84} + \varepsilon, \frac{55}{84} + \varepsilon)$\\
\hline
$\frac{13}{318} + \frac{253}{318} \alpha$ & $\frac{2848}{12173}$ & $\frac{161}{646} = 0.2492\ldots$ & \multirow{6}{*}{Huxley \cite[Table 17.1]{huxley_area_1996}}\\
\cline{1-3}
$\frac{11}{492} + \frac{107}{123} \alpha$ & $\frac{161}{646}$ & $\frac{19}{74} = 0.2567\ldots$ & \\
\cline{1-3}
$ \frac{89}{2706} + \frac{2243}{2706} \alpha $ & $ \frac{19}{74} $ & $ \frac{199}{716} = 0.2779\ldots $ & \\
\cline{1-3}
$ \frac{29}{600} + \frac{58}{75} \alpha $ & $ \frac{199}{716} $ & $ \frac{967}{3428} = 0.2820\ldots $ & \\
\cline{1-3}
$ \frac{49}{1614} + \frac{1351}{1614} \alpha $ & $ \frac{967}{3428} $ & $ \frac{120}{419} = 0.2863\ldots $ & \\
\cline{1-3}
$ \frac{1}{66} + \frac{235}{264} \alpha $ & $ \frac{120}{419} $ & $ \frac{1328}{4447} = 0.2986\ldots $ & \\
\hline
$ \frac{13}{194} + \frac{139}{194} \alpha $ & $ \frac{1328}{4447} $ & $ \frac{104}{343} = 0.3032\ldots $ & Exponent pair $A(\frac{13}{84} + \varepsilon, \frac{55}{84} + \varepsilon)$\\
\hline
$ \frac{13}{146} + \frac{47}{73} \alpha $ & $ \frac{104}{343} $ & $ \frac{87}{275} = 0.3163\ldots $ & \multirow{5}{*}{Huxley \cite[Table 17.1]{huxley_area_1996}}\\
\cline{1-3}
$ \frac{11}{244} + \frac{191}{244} \alpha $ & $ \frac{87}{275} $ & $ \frac{423}{1295} = 0.3266\ldots $ & \\
\cline{1-3}
$ \frac{89}{1282} + \frac{454}{641} \alpha $ & $ \frac{423}{1295} $ & $ \frac{227}{601} = 0.3777\ldots $ & \\
\cline{1-3}
$ \frac{29}{280} + \frac{173}{280} \alpha $ & $ \frac{227}{601} $ & $ \frac{12}{31} = 0.3870\ldots $ & \\
\cline{1-3}
$ \frac{1}{32} + \frac{103}{128} \alpha $ & $ \frac{12}{31} $ & $ \frac{1508}{3825} = 0.3942\ldots $ & \\
\hline
$ \frac{18}{199} + \frac{521}{796} \alpha $ & $ \frac{1508}{3825} $ & $ \frac{62831}{155153} = 0.4049\ldots $ & \eqref{sargoes_exp_pair} and Sargos \cite{sargos_points_1995}\\
\hline
$ \frac{569}{2800} + \frac{1053}{2800} \alpha $ & $ \frac{62831}{155153} $ & $ \frac{143}{349} = 0.4097\ldots $ & \multirow{3}{*}{Huxley \cite[Table 19.2]{huxley_area_1996}}\\
\cline{1-3}
$ \frac{491}{5530} + \frac{1812}{2765} \alpha $ & $ \frac{143}{349} $ & $ \frac{263}{638} = 0.4122\ldots $ & \\
\cline{1-3}
$ \frac{113}{1345} + \frac{897}{1345} \alpha $ & $ \frac{263}{638} $ & $ \frac{1673}{4038} = 0.4143\ldots $ & \\
\hline
$ \frac{2}{9} + \frac{1}{3} \alpha $ & $ \frac{1673}{4038} $ & $ \frac{5}{12} = 0.4166\ldots $ & \multirow{2}{*}{Bourgain \cite[Eqn.\ 3.18]{bourgain_decoupling_2016}}\\
\cline{1-3}
$ \frac{1}{12} + \frac{2}{3} \alpha $ & $ \frac{5}{12} $ & $ \frac{3}{7} = 0.4285\ldots $ & \\
\hline
$ \frac{13}{84} + \frac{1}{2} \alpha $ & $ \frac{3}{7} $ & $ \frac{1}{2}$ &  Bourgain \cite[Thm.\ 4]{bourgain_decoupling_2016}\\
\hline
\end{tabular}
\label{huxley_table_ranges}
\end{table}

\subsubsection{Bounds on $\beta(\alpha)$ from \cite[Eqn.\ (3.13)]{bourgain_decoupling_2016} and \cite[Thm.\ 4]{bourgain_decoupling_2016}}
These results assume only that
\[
\min_{1 \le x \le 2}\{|F''(x)|, |F'''(x)|, |F^{(4)}(x)|\} > c_1
\]
for some constant $c_1 \in (0, 1]$. This holds for all $f \in \textbf{F}(N, P, \sigma, y, c)$ with $P \ge 4$ and $c < 1/2$, since then for $2 \le k \le 4$, 
\begin{equation}\label{Fk_cond}
|F^{(k)}(x)| = T^{-1}N^k|f^{(k)}(xN)| \ge (\sigma)_{k - 2}(1 - c)x^{-k}
\end{equation}
where $(\sigma)_p = \prod_{m = 0}^{p}(\sigma + m)$. 

\subsubsection{Bounds on $\beta(\alpha)$ from \cite[Table 17.1]{huxley_area_1996}}
These bounds follow directly from \cite[Thm.\ 17.1.4]{huxley_area_1996} and \cite[Thm.\ 17.4.2]{huxley_area_1996} with $R = 1, 2$. For $\alpha \le 1/2$, the first of these theorems only requires $F'''(x), F^{(4)}(x) \asymp 1$, which follow immediately from an argument similar to \eqref{Fk_cond}. Next, we will verify that the hypothesis of the second theorem is satisfied for all $f \in \textbf{F}(N, P, \sigma, y, c)$, provided that
\begin{equation}\label{Pc_assumptions}
P \ge 8,\qquad 0 < c < \frac{1}{1000(\sigma + 1)}.
\end{equation}
First, the condition \cite[Eqn.\ (17.4.3)]{huxley_area_1996} follows from \eqref{Fk_cond}, where we note that the lower bound on $P$ ensures $F^{(r)}(x)$ is well-defined for $R + 2 \le r \le R + 4$. 

Next, for $3 \le r \le 5$,
\[
\frac{\text{d}^2}{\text{d}x^2}\log F^{(r)}(x) = \frac{F^{(r)}(x)F^{(r + 2)}(x) - (F^{(r + 1)}(x))^2}{(F^{(r)}(x))^2} \neq 0
\]
since 
\[
\frac{(\sigma)_{r - 2}(\sigma)_{r}}{(\sigma)_{r - 1}^2} = \frac{\sigma + r}{\sigma + r - 1} \ge \frac{\sigma + 5}{\sigma + 4} > \frac{(1 + c)^2}{(1 - c)^2}
\]
by virtue of the upper-bound on $c$ in \eqref{Pc_assumptions}. Thus \cite[Eqn.\ (17.4.4)]{huxley_area_1996} is satisfied. We may verify the remaining two conditions of \cite[Thm.\ 17.4.2]{huxley_area_1996} in a similar fashion. The first condition follows from
\[
\frac{3(\sigma)_{R + 1}}{(\sigma)_{R}(\sigma)_{R + 2}} = \frac{3(\sigma + R + 1)}{\sigma + R + 2} > \frac{(1 + c)^2}{(1 - c)^2},\qquad (R = 1, 2),
\]
which implies $3(F^{(R + 3)})^2 - F^{(R + 2)}F^{(R + 4)} \neq 0$. The second condition also holds, since $F^{(R + 3)}F^{(R + 5)}, F^{(R + 2)}F^{(R + 4)} > 0$, so
\begin{align*}
&\begin{vmatrix}
3(F^{(R+ 3)})^2 + 4F^{(R + 2)}F^{(R + 4)} & 3F^{(R + 2)}F^{(R + 3)} & (F^{(R + 2)})^2\\
F^{(R + 4)} & F^{(R + 3)} & F^{(R + 2)}\\
F^{(R + 5)} & F^{(R + 4)} & F^{(R + 3)}
\end{vmatrix}\\
&\qquad =3(F^{(R + 3)})^4 - 3(F^{(R + 2)}F^{(R + 4)})^2 \\
&\qquad\qquad - 2F^{(R + 2)}F^{(R + 3)}(F^{(R + 2)}F^{(R + 5)} - F^{(R + 3)}F^{(R + 4)})\\
&\qquad < \big(3(\sigma)_{R + 1}^4(1 + c)^4 - 3(\sigma)_{R}^2(\sigma)_{R + 2}^2(1 - c)^4 \\
&\qquad\qquad + 2(\sigma)_{R}^2(\sigma)_{R + 3}(\sigma)_{R + 1}(1 + c)^4 - 2(\sigma)_{R + 1}^2(\sigma)_{R + 2}(\sigma)_{R}(1 - c)^4\big)x^{-4R - 12}\\
&\qquad < 0,
\end{align*}
where the last inequality follows from 
\[
\frac{3(\sigma + R + 1)^2 + 2(\sigma + R + 2)(\sigma + R + 3)}{3(\sigma + R + 2)^2 + 2(\sigma + R + 1)(\sigma + R + 2)} < \frac{(1 - c)^4}{(1 + c)^4},\qquad (R = 1, 2),
\]
valid for $c < (1000(\sigma + 1))^{-1}$.

\subsubsection{Bounds on $\beta(\alpha)$ from \cite[Table 19.2]{huxley_area_1996}} In addition to the assumptions required for \cite[Table 17.1]{huxley_area_1996}, the results of this table must satisfy the assumptions of \cite[Lem.\ 19.2.1]{huxley_area_1996}, which poses no difficulty in view of \eqref{Fk_cond}.

\subsubsection{Bounds on $\beta(\alpha)$ arising from exponent pairs} Finally, if $(k_0, \ell_0)$ is an exponent pair, then there exists $P_0, c_0 > 0$ such that, for all $f \in \textbf{F}(N, P_0, \sigma, y, c_0)$, $S \ll (T/N)^{k_0}N^{\ell_0}$. Therefore, $\beta(\alpha) \le k_0 + (\ell_0 - k_0)\alpha$ for $0 \le \alpha \le 1/2$, provided that we take $P \ge P_0$ and $c \le c_0$. 

\subsubsection{Constructing exponent pairs}
The convex hull containing points of the form \eqref{alpha_R_points}, where $\beta_0(\alpha)$ is given piecewise by the bounds in Table \ref{huxley_table_ranges}, has the following vertices
\[
\left(0, \tfrac{13}{414}\right),\quad\left(\tfrac{1328}{4447}, \tfrac{2499}{8894}\right),\quad\left(\tfrac{104}{343}, \tfrac{195}{686}\right),\quad \left(\tfrac{227}{601}, \tfrac{405}{1202}\right),
\]
\[
\left(\tfrac{1508}{3825},\tfrac{1333}{3825}\right),\quad \left(\tfrac{62831}{155153}, \tfrac{220633}{620612}\right),\quad\left(\tfrac{3}{7}, \tfrac{31}{84}\right),\quad \left(\tfrac{1}{2}, \tfrac{17}{42}\right).
\]
The claimed exponent pairs then follow from lines joining two consecutive vertices. For instance, from the vertices $\left(\tfrac{62831}{155153}, \tfrac{220633}{620612}\right)$ and $\left(\tfrac{3}{7}, \tfrac{31}{84}\right)$ we may verify that 
\begin{equation*}
\beta(\alpha) \le \beta_0(\alpha) \le \tfrac{4742}{38463} + \tfrac{88225}{153852}\alpha,\qquad \left(0 \le \alpha \le \tfrac{1}{2}\right)
\end{equation*}
which implies the exponent pair $(k, \ell) = (\frac{4742}{38463} + \varepsilon, \frac{35731}{51284} + \varepsilon)$. Repeating this process creates seven exponent pairs, of which four (stated in Lemma \ref{new_exponent_pairs_lem}) are new. 

\section{Proof of Theorem \ref{reg_thm_1}}

In this section we prove several results related to the geometry of $H$ (see Definition \ref{convex_hull_H_defn}), which together imply Theorem \ref{reg_thm_1}. 

\begin{lemma}\label{h_theta_convex_lem}
The set $H$ is convex. 
\end{lemma}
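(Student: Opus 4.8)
The plan is to show convexity of $H$ by verifying that the infinite sequence of vertices $(k_n,\ell_n)$, together with the segment from $(0,1)$ to $(1/2,1/2)$, forms the boundary of a convex region. Since a polygon (even one with infinitely many edges) is convex precisely when its vertices, traversed in order, turn consistently in one direction, it suffices to check that the boundary is locally convex at every vertex. By the symmetry of the vertex set about the line $\ell = k + 1/2$ (noted just before Definition \ref{convex_hull_H_defn}), it is enough to verify local convexity at $(k_n,\ell_n)$ for $n \ge 0$, i.e. to show that consecutive slopes of the edges joining $(k_n,\ell_n)$ to $(k_{n+1},\ell_{n+1})$ are monotonic. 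Concretely, writing $s_n := (\ell_{n+1}-\ell_n)/(k_{n+1}-k_n)$ for the slope of the $n$th edge, I would show $s_n$ is monotone in $n$ over the relevant range and that the boundary closes up correctly near the limit points $(0,1)$ and $(1/2,1/2)$.

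First I would dispose of the finitely many ``sporadic'' vertices: for $-8 \le n \le 8$ the points are given by the explicit rationals in \eqref{knln_defn} together with $A(k_{n-4},\ell_{n-4})$ and $B(k_{-n},\ell_{-n})$, so the slopes $s_n$ in this range are finitely many explicit rational numbers and monotonicity is a finite (if tedious) rational arithmetic check; I would simply tabulate these. Second, and this is where the real work lies, I would handle the tail $n \ge 9$, where $(k_n,\ell_n) = (p_{n-4},q_{n-4})$ with $(p_m,q_m)$ given by \eqref{hb_exponent_pair}. Here one must show that the slopes of the segments joining consecutive Heath--Brown points $(p_m,q_m)$ and $(p_{m+1},q_{m+1})$ are monotone in $m$, and moreover that the last sporadic slope $s_8$ meshes correctly with $s_9$. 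Since $p_m = 2/((m-1)^2(m+2))$ and $q_m = 1 - (3m-2)/(m(m-1)(m+2))$ (ignoring $\varepsilon$'s, which only shift $\ell$ by an infinitesimal and do not affect convexity), this reduces to analysing a rational function of $m$; I would compute $q_{m+1}-q_m$ and $p_{m+1}-p_m$ as explicit rational functions of $m$, form their ratio, and show the ratio is monotone for $m \ge 5$, e.g. by showing the relevant polynomial discriminant-type expression has a fixed sign. I would also check that $s_n \to$ the slope of the segment from $(0,1)$ inward, consistent with $(k_n,\ell_n)\to(0,1)$, so no ``crossing'' occurs at the accumulation point.

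Finally I would verify that the closing segment from $(0,1)$ to $(1/2,1/2)$ (slope $-1$) lies on the convex side of all the vertices — equivalently that every $(k_n,\ell_n)$ satisfies $k_n + \ell_n \le 1$ with the vertices lying above the line $k+\ell=1$ only in the degenerate limit — which is immediate since all listed exponent pairs are non-trivial and satisfy $k+\ell<1$, while $(0,1)$ and $(1/2,1/2)$ lie on $k+\ell=1$. Combining the local-convexity checks at all vertices with the correct behaviour at the two limit points and along the closing segment yields that $H$ is the intersection of half-planes determined by its edges, hence convex.

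The main obstacle I anticipate is the tail analysis for $n \ge 9$: proving monotonicity of the edge slopes between consecutive Heath--Brown exponent pairs requires careful handling of a rational function in $m$ and confirming it dovetails with the last few sporadic vertices without the boundary doubling back. The sporadic part, by contrast, is a bounded finite computation, and the closing segment is essentially trivial.
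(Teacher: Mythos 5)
Your proposal is correct and follows essentially the same approach as the paper's proof: reduce to $n\ge 0$ by the symmetry about $\ell = k + 1/2$, tabulate the finitely many sporadic slopes, and analyse the slope sequence coming from the Heath--Brown pairs $(p_m,q_m)$ in the tail. The only thing the paper adds is that the tail slope has the simple closed form $Q_n = -(n-5)(n-4)/(n-3)$ for $n\ge 9$, so monotonicity is immediate rather than requiring the ``discriminant-type'' analysis you anticipate; the paper also phrases the matching condition at the line of symmetry as $Q_0 \le -1$ (equivalent, via $Q_{-1}=1/Q_0$, to your local-convexity check at $(k_0,\ell_0)$).
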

\begin{proof}
Due to the symmetry of $H$ about the line $\ell = k + 1/2$, it suffices to show that  the quantity 
\[
Q_n := \frac{\ell_{n + 1} - \ell_n}{k_{n + 1} - k_n},
\]
representing the slope of the line joining two successive vertices of $H$, is negative and decreasing for integer $n \ge 0$, and that $Q_0 \le -1$. We verify this computationally for $0 \le n \le 8$, by explicitly computing $Q_n$. Also, for $n \ge 9$, we have $(k_n, \ell_n) = (p_{n - 4}, q_{n - 4})$ (where $(p_m, q_m)$ are defined in \eqref{hb_exponent_pair}), so
\[
Q_n = \frac{q_{n - 3} - q_{n - 4}}{p_{n - 3} - p_{n - 4}} = -\frac{(n - 5)(n - 4)}{n - 3}
\]
which is negative and decreasing for all $n \ge 9$, as required. 
\end{proof}

Next, we seek to show that $H$ is closed under the $A$ and $C$ transformations. While such transformations are non-linear, it turns out that both $A$ and $C$ satisfy a type of quasilinear property as shown in the next lemma. 
\begin{lemma}\label{quasiconvexity_lem}
Let $P: [0, \frac{1}{2}] \times [\frac{1}{2}, 1] \mapsto [0, \frac{1}{2}] \times [\frac{1}{2}, 1]$ be a projective transformation of the form 
\[
P(k, \ell) := \left(\frac{\phi_1(k, \ell)}{\phi_2(k, \ell)}, \frac{\phi_3(k, \ell)}{\phi_2(k, \ell)}\right),\qquad \phi_i = a_i k + b_i \ell + c_i
\]
where $a_i, b_i, c_i$ are constants. Then, for any $p_1 = (k_1, \ell_1)$ and $p_2 = (k_2, \ell_2)$, we have 
\[
P(\lambda p_1 + (1 - \lambda)p_2) = \mu P(p_1) + (1 - \mu) P(p_2)
\]
for some monotonically increasing $\mu = \mu(\lambda)$ satisfying $\mu(0) = 0$, $\mu(1) = 1$. 
\end{lemma}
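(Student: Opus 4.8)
The plan is to reduce the statement to a one-variable computation by parametrising the segment $[p_1,p_2]$ and tracking how the projective map $P$ acts on the parameter. Write $p(\lambda) = \lambda p_1 + (1-\lambda) p_2$, so that as $\lambda$ runs over $[0,1]$ the point $p(\lambda)$ traces the segment from $p_2$ (at $\lambda = 0$) to $p_1$ (at $\lambda = 1$). Since each $\phi_i$ is affine in $(k,\ell)$, each $\phi_i(p(\lambda))$ is an affine function of $\lambda$; write $\phi_i(p(\lambda)) = \lambda\,\phi_i(p_1) + (1-\lambda)\,\phi_i(p_2)$. In particular $\phi_2(p(\lambda)) = \lambda\,\phi_2(p_1) + (1-\lambda)\,\phi_2(p_2)$ is affine and, because $P$ maps into $[0,\tfrac12]\times[\tfrac12,1]$ where all relevant quantities stay bounded and the images are well-defined, $\phi_2$ does not vanish on the segment, so it keeps a constant sign there.

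Next I would exhibit the mixing parameter $\mu$ explicitly. The claim $P(p(\lambda)) = \mu\, P(p_1) + (1-\mu)\,P(p_2)$ amounts, coordinate by coordinate, to
\[
\frac{\phi_j(p(\lambda))}{\phi_2(p(\lambda))} = \mu\,\frac{\phi_j(p_1)}{\phi_2(p_1)} + (1-\mu)\,\frac{\phi_j(p_2)}{\phi_2(p_2)},\qquad j = 1, 3.
\]
Substituting $\phi_j(p(\lambda)) = \lambda\phi_j(p_1) + (1-\lambda)\phi_j(p_2)$ and clearing denominators, both equations (for $j=1$ and $j=3$) reduce to the \emph{same} linear relation between $\mu$ and $\lambda$, namely the one obtained by requiring the coefficients of $\phi_j(p_1)$ and of $\phi_j(p_2)$ to match. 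Solving it gives
\[
\mu = \mu(\lambda) = \frac{\lambda\,\phi_2(p_1)}{\lambda\,\phi_2(p_1) + (1-\lambda)\,\phi_2(p_2)}.
\]
One checks directly that $\mu(0) = 0$ and $\mu(1) = 1$. For monotonicity, differentiate: writing $s = \phi_2(p_1)$, $t = \phi_2(p_2)$ (both nonzero of the same sign), one gets $\mu'(\lambda) = st/(\lambda s + (1-\lambda)t)^2 > 0$ since $st > 0$, so $\mu$ is strictly increasing on $[0,1]$. This completes the argument.

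The only genuine point requiring care — and the step I expect to be the main obstacle — is verifying that $\phi_2$ really is nonvanishing (equivalently, constant-sign) along the whole segment $[p_1,p_2]$, so that the division is legitimate and $\mu$ is well-defined and finite throughout. This is where the hypothesis that $P$ maps $[0,\tfrac12]\times[\tfrac12,1]$ into itself is used: since the segment lies in this square (it is convex) and $P$ is defined on the square, $\phi_2$ cannot be zero on the segment, and by continuity it has constant sign. Everything else is the routine algebra of clearing denominators in a projective transformation and reading off the resulting Möbius-type relation $\lambda \mapsto \mu$.
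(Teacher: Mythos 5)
Your proposal is correct and follows essentially the same route as the paper: both derive the explicit M\"obius relation
\[
\mu(\lambda) = \frac{\lambda\,\phi_2(p_1)}{\lambda\,\phi_2(p_1) + (1-\lambda)\,\phi_2(p_2)},
\]
which is exactly the paper's $\mu = \lambda\big/\bigl(\lambda + (1-\lambda)\phi_2(p_2)/\phi_2(p_1)\bigr)$, and then verify the identity coordinate-by-coordinate using linearity of the $\phi_i$. Your write-up is if anything slightly more careful than the paper's: you spell out why $\phi_2$ must be nonvanishing and of constant sign on the segment (the paper leaves this implicit) and you justify monotonicity by the explicit derivative $\mu'(\lambda) = st/(\lambda s + (1-\lambda)t)^2 > 0$ rather than merely asserting it.
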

\begin{proof}
If $(k, \ell) = P(\lambda p_1 + (1 - \lambda) p_2)$ where $p_1 = (k_1, \ell_1)$, $p_2 = (k_2, \ell_2)$ then, by linearity of $\phi_i$, 
\begin{equation}\label{k_equality}
k = \frac{\phi_1(\lambda k_1 + (1 - \lambda)k_2, \lambda\ell_1 + (1 - \lambda)\ell_2)}{\phi_2(\lambda k_1 + (1 - \lambda)k_2, \lambda\ell_1 + (1 - \lambda)\ell_2)} = \frac{\lambda \phi_1(k_1, \ell_1) + (1 - \lambda)\phi_1(k_2, \ell_2)}{\lambda \phi_2(k_1, \ell_1) + (1 - \lambda)\phi_2(k_2, \ell_2)}.
\end{equation}
Let us define
\[
\mu = \frac{\lambda}{\lambda + (1 - \lambda)\frac{\phi_2(k_2, \ell_2)}{\phi_2(k_1, \ell_1)}}
\]
so that $\mu$ varies monotonically from 0 to 1 with $\lambda$, and 
\[
\frac{\phi_2(k_2, \ell_2)}{\phi_2(k_1, \ell_1)} = \frac{\lambda}{1 - \lambda}\frac{1 - \mu}{\mu}.
\]
This gives
\begin{align*}
&(\lambda \phi_2(k_1, \ell_1) + (1 - \lambda)\phi_2(k_2, \ell_2))\left(\mu \frac{\phi_1(k_1, \ell_1)}{\phi_2(k_1, \ell_1)} + (1 - \mu)\frac{\phi_1(k_2, \ell_2)}{\phi_2(k_2, \ell_2)}\right) \\
& = \lambda \mu \phi_1(k_1, \ell_1) + (1 - \lambda)(1 - \mu)\phi_1(k_2, \ell_2) \\
&\qquad\qquad+ \mu(1 - \lambda)\phi_1(k_1, \ell_1)\frac{\phi_2(k_2, \ell_2)}{\phi_2(k_1, \ell_1)} + \lambda(1 - \mu)\phi_1(k_2, \ell_2)\frac{\phi_2(k_1, \ell_1)}{\phi_2(k_2, \ell_2)}\\
&= \lambda \mu \phi_1(k_1, \ell_1) + (1 - \lambda)(1 - \mu)\phi_1(k_2, \ell_2) + \mu(1 - \lambda)\phi_1(k_2, \ell_2) + \lambda(1 - \mu)\phi_1(k_1, \ell_1)\\
&= \lambda \phi_1(k_1, \ell_1) + (1 - \lambda)\phi_1(k_2, \ell_2).
\end{align*}
Therefore, combining with \eqref{k_equality} gives
\[
k = \mu \frac{\phi_1}{\phi_2}(k_1, \ell_1) + (1 - \mu)\frac{\phi_1}{\phi_2}(k_2, \ell_2).
\]
A similar procedure gives 
\[
\ell = \mu \frac{\phi_3}{\phi_2}(k_1, \ell_1) + (1 - \mu)\frac{\phi_3}{\phi_2}(k_2, \ell_2),
\]
and the desired result follows. 
\end{proof}

Observe that both the $A$ and $C$ transformations are of the form specified in Lemma \ref{quasiconvexity_lem}. 
A corollary is that the image of the line segment joining points $p_1$ and $p_2$ under $A$, is the line segment joining points $A(p_1)$ and $A(p_2)$ (and analogously for the $C$ operation). The observation that $A$ maps line segments to line segments was also noted in \cite{petermann_divisor_1988}, without proof. 

\begin{lemma}\label{H_closed_A_lem}
If $(k, \ell) \in H$, then $A(k, \ell) \in H$ and $C(k, \ell) \in H$. 
\end{lemma}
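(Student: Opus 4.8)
The plan is to exploit the convexity of $H$ (Lemma \ref{h_theta_convex_lem}) together with the quasilinearity of $A$ and $C$ established in Lemma \ref{quasiconvexity_lem}. Since $H$ is convex, it equals the convex hull of its vertices $(k_n,\ell_n)$ ($n\in\mathbb Z$) together with the segment endpoints $(0,1)$ and $(1/2,1/2)$; equivalently, $H$ is the intersection of the half-planes cut out by its (infinitely many) edges. By Lemma \ref{quasiconvexity_lem}, $A$ carries the line segment joining $p_1$ and $p_2$ onto the line segment joining $A(p_1)$ and $A(p_2)$, and likewise for $C$; hence $A$ maps the boundary polygon of $H$ onto the polygonal arc through the points $A(k_n,\ell_n)$, and $A(H)$ is exactly the region bounded by this image arc (together with the image of the extremal segment). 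Therefore it suffices to show that each image vertex $A(k_n,\ell_n)$ lies in $H$, and that the image arc does not ``bulge out'' of $H$ — but the latter is automatic once the vertices lie in $H$, because $A(H)$ is the convex hull of the $A(k_n,\ell_n)$ (convexity of $A(H)$ follows from Lemma \ref{quasiconvexity_lem}, since $A$ preserves the betweenness structure of collinear points). So the whole lemma reduces to the claim: $A(k_n,\ell_n)\in H$ and $C(k_n,\ell_n)\in H$ for all relevant $n$, plus $A$ and $C$ of the two extremal points lie in $H$.

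Next I would dispose of the extremal points and the tails. One checks directly that $A(0,1)=(0,1)$ and $A(1/2,1/2)=(1/6,2/3)$, both in $H$ (the latter is an interior classical pair), and similarly $C(0,1)=(0,11/12)$, $C(1/2,1/2)=(1/72,35/36)$, which lie in $H$ since they sit above the extremal segment and below the boundary near $(0,1)$. For the tails $|n|\ge 9$, the vertices are the Heath-Brown pairs $(p_m,q_m)$ from \eqref{hb_exponent_pair}; here I would show that $A(p_m,q_m)$ and $C(p_m,q_m)$ lie on or inside the boundary arc formed by the $(p_{m'},q_{m'})$ — indeed, applying $A$ to $(p_m,q_m)$ gives a point whose $k$-coordinate is comparable to $p_{m}/(2p_m+2)\approx p_m/2$, which (for large $m$) is dominated by $p_{2m+O(1)}$-type quantities, so the image lands strictly inside $H$; one makes this a finite explicit inequality by comparing with the nearest two boundary vertices. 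The same strategy, with the explicit formula \eqref{C_process_defn}, handles $C$ on the tails. By the $B$-symmetry of $H$ and the identities relating $A$, $B$ and the boundary, the $n<0$ tails follow from the $n>0$ tails.

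This leaves the finitely many ``central'' vertices $(k_n,\ell_n)$ for $-8\le n\le 8$ (in fact by $B$-symmetry only $0\le n\le 8$), where I would simply compute $A(k_n,\ell_n)$ and $C(k_n,\ell_n)$ as explicit rational points and verify membership in $H$ by checking the relevant supporting-line inequalities against the handful of $H$-edges nearest to each image point. For $A$, note $A(k_0,\ell_0)=(13/194,139/194+\varepsilon)=(k_6,\ell_6)$ up to $\varepsilon$, $A(k_1,\ell_1)=(k_5,\ell_5)$, etc., by construction of the $(k_n,\ell_n)$ for $5\le n\le 8$ in \eqref{knln_defn}; so for those four central vertices $A$ lands \emph{on} the boundary, and convexity does the rest. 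For the remaining central vertices and for all of $C$, the verification is a finite rational computation.

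The main obstacle is the last paragraph: making the finite case-check genuinely airtight, since the boundary of $H$ has many edges and one must be sure each image point is compared against the correct adjacent pair of vertices. Conceptually everything is forced by convexity and Lemma \ref{quasiconvexity_lem}; the real work is bookkeeping the explicit inequalities, which is best delegated to a short symbolic computation. In particular, the claim that $A(H)$ is \emph{convex} — needed so that ``vertices inside $H$'' implies ``all of $A(H)$ inside $H$'' — should be stated carefully: it follows because $A$ is a projective map that, by Lemma \ref{quasiconvexity_lem}, sends each chord of $H$ to a chord of $A(H)$ with monotone reparametrisation, so no chord of $A(H)$ escapes the image, and $A(H)$ inherits convexity from the convex polygon $H$.
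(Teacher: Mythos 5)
Your overall strategy matches the paper's exactly: use Lemma \ref{quasiconvexity_lem} to reduce closure of $H$ under $A$ and $C$ to showing that each image \emph{vertex} $A(k_n,\ell_n)$ and $C(k_n,\ell_n)$ lies in $H$; then split into a finite central range (checked explicitly) and asymptotic tails (handled analytically). The observation that $A(k_n,\ell_n)=(k_{n+4},\ell_{n+4})$ for $1\le n\le 4$, by construction \eqref{knln_defn}, also appears implicitly in the paper's setup, and the reliance on projectivity to map chords to chords is identical.

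The substantive gap is in your tail argument. You argue only about the $k$-coordinate of $A(p_m,q_m)$: it is roughly $p_m/2\approx p_{2^{1/3}m}$, so you claim ``the image lands strictly inside $H$.'' That inference does not follow. Having a small $k$-coordinate only tells you which edge of $H$ the image must be compared against; to conclude membership you must also verify the supporting-line inequality \eqref{H_condition}, which couples $k'$ and $\ell'$. Near $(0,1)$ both the image points and the boundary vertices tend to $(0,1)$, and the whole question is the relative rates. The paper's Lemmas \ref{kmlm_induction_lem} and \ref{C_transform_lem} do precisely this: they identify the index $N$ with $k'_m\in(k_{N+1},k_N]$ \emph{and} prove $\ell'_m>\ell_{N+1}$, which together with $k'_m>k_{N+1}$ yields \eqref{H_condition}. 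Your proposal omits the $\ell$-side entirely, so the ``lands strictly inside $H$'' claim is unsupported.

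A second, related issue: you propose that only the central vertices $|n|\le 8$ require explicit rational verification. This is too optimistic. The analytic tail estimates (of the kind in Lemmas \ref{kmlm_induction_lem} and \ref{C_transform_lem}) require $|m|$ large --- for example, the inequality
\[
1-\frac{3}{2^{1/3}(N-2^{7/3}-5)^2}>1-\frac{3N-11}{(N-4)(N-3)(N-1)}
\]
used in Lemma \ref{kmlm_induction_lem} fails for small $N$ (it is false at $N=9$, where the left side is negative). That is why the paper verifies $|m|<100$ computationally against $H_{1000}$ and only then invokes the asymptotic lemmas for $|m|\ge 100$. Your proposed 17-point check would leave a genuine gap for moderate $|n|$.

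Two small arithmetic slips, not essential but worth correcting: $A(k_0,\ell_0)=(13/194,\,76/97)$, not $(13/194,\,139/194)$, and it is not a vertex of $H$ (the vertex $(k_6,\ell_6)$ is $A(k_2,\ell_2)$, not $A(k_0,\ell_0)$); similarly $C(1/2,1/2)=(1/72,\,67/72)$, not $(1/72,\,35/36)$.
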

\begin{proof}
Let $P$ denote a transformation satisfying the conditions of Lemma \ref{quasiconvexity_lem}. By Lemma \ref{quasiconvexity_lem}, the image under $P$ of a convex polygon with vertices $p_1, p_2, \ldots$ is a convex polygon with vertices $P(p_1), P(p_2), \ldots$. To show that $H$ is closed under $P$, it suffices to show that the image of any vertex of $H$ lies inside $H$. That is, for all integers $m$, we seek to show that 
\[
(k_m', \ell_m') := P(k_m, \ell_m) \in H
\]
where $(k_m, \ell_m)$ defined in \eqref{knln_defn} are the vertices of $H$. Note that since $k_m' \in [0, 1/2]$, it is necessarily the case that $k_{N + 1} < k_m' \le k_N$ for some integer $N = N(m)$. Therefore, to show that $P(k_m, \ell_m) \in H$ it suffices to prove that if $k_m' \in (k_{N + 1}, k_N]$ then
\begin{equation}\label{H_condition}
k_m'(\ell_{N + 1} - \ell_N) + \ell_m'(k_N - k_{N + 1}) \ge k_N\ell_{N + 1} - \ell_N k_{N + 1},
\end{equation}
and that
\begin{equation}\label{requirement3}
\ell_m' + k_m' < 1,\qquad m \in \mathbb{Z}.
\end{equation}
These inequalities are obtained by inspecting the boundary of the region $H \cap \{(k, \ell): k_{N + 1} < k \le k_N\}$. For instance, \eqref{H_condition} arises because the line joining $(k_N, \ell_N)$ and $(k_{N + 1}, \ell_{N + 1})$ has equation 
\[
k(\ell_{N + 1} - \ell_N) + \ell(k_N - k_{N + 1}) = k_N\ell_{N + 1} - \ell_N k_{N + 1}.
\]
Let us now specialise our argument to the $A$ transformation, so that 
\[
(k_m', \ell_m') = \left(\frac{k_m}{2k_m + 2}, \frac{\ell_m}{2k_m + 2} + \frac{1}{2}\right).
\]
We computationally verify that $A(k_m, \ell_m) \in H_{1000} \subset H$ for $|m| < 100$, where $H_N$ is defined in \eqref{HN_vertices} and the verification source code is given in \S \ref{verify_program}. For $|m| \ge 100$, observe that since $(k_m, \ell_m) \in H$, we have $k_m + \ell_m < 1$ and so 
\[
\frac{k_m}{2k_m + 2} + \frac{\ell_m}{2k_m + 2} + \frac{1}{2} \le \frac{1}{2k_m + 2} + \frac{1}{2} < 1,
\]
so \eqref{requirement3} holds. Finally, applying Lemma \ref{kmlm_induction_lem} below, we also see that condition \eqref{H_condition} holds. Therefore, $H$ is closed under $A$. 

The argument for the $C$ transformation is similar. Here, we have
\[
(k_m', \ell_m') =  \left(\frac{k_m}{12(1 + 4k_m)}, \frac{\ell_m}{12(1 + 4k_m)} + \frac{11}{12} \right).
\]
If $|m| < 100$ we computationally verify as before that $C(k_m, \ell_m)\in H_{1000} \subset H$. For $|m| \ge 100$, since $k_m + \ell_m < 1$,
\[
\frac{k_m}{12(1 + 4k_m)} + \frac{\ell_m}{12(1 + 4k_m)} < \frac{1}{12}
\]
so that $k_m' + \ell_m' < 1$. Therefore, \eqref{requirement3} is satisfied. To complete the proof we observe that \eqref{H_condition} holds via Lemma \ref{C_transform_lem}. Therefore, $H$ is also closed under $C$. 
\end{proof}

\begin{lemma}\label{kmlm_induction_lem}
Let $(k_n, \ell_n)$ be as defined in \eqref{knln_defn}. If $|m| \ge 100$ and $N$ are integers such that $k_{N + 1} < \frac{k_m}{2k_m + 2} \le k_N$, then
\begin{equation}\label{result_kmlm}
\frac{k_m}{2k_m + 2}(\ell_{N + 1} - \ell_N) + \left(\frac{\ell_{m}}{2k_m + 2} + \frac{1}{2}\right)(k_N - k_{N + 1}) \ge k_N\ell_{N + 1} - \ell_N k_{N + 1}.
\end{equation}
\end{lemma}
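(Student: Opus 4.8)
\emph{Setup.} The plan is to substitute the explicit formulae for the Heath--Brown pairs \eqref{hb_exponent_pair} (and their $B$-transforms) and reduce \eqref{result_kmlm} to an elementary polynomial inequality in two integer variables. Since $(k_{-n},\ell_{-n})=B(k_n,\ell_n)$ but $A$ and $B$ do not commute, the cases $m\ge 100$ and $m\le -100$ are handled separately; only the first requires genuine work.

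\emph{Case $m\ge 100$.} Here $(k_m,\ell_m)=(p_{m-4},q_{m-4})$ with $p_j=\tfrac{2}{(j-1)^2(j+2)}$, $q_j=1-\tfrac{3j-2}{j(j-1)(j+2)}$; write $j:=m-4\ge 96$. A short computation with \eqref{A_process_defn} gives
\[
A(k_m,\ell_m)=\left(\frac{1}{g(j)},\; 1-\frac{3j^2-3j+2}{2j\,g(j)}\right),\qquad g(j):=j^3-3j+4 .
\]
The index $N$ of the statement satisfies $k_{N+1}<\tfrac1{g(j)}\le k_N$; setting $i:=N-4$ this reads $(i-1)^2(i+2)\le 2g(j)<i^2(i+3)$, which forces $i\ge 9$, so $(k_N,\ell_N)=(p_i,q_i)$ and $(k_{N+1},\ell_{N+1})=(p_{i+1},q_{i+1})$. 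The crucial point is that the slope of the edge joining these two vertices simplifies, after the common factor $3i^2+3i-2$ cancels, to $-\tfrac{i(i-1)}{i+1}$; and likewise $q_i - \tfrac{i(i-1)}{i+1}\cdot\tfrac{2}{(i-1)^2(i+2)}=\tfrac{1}{i(i+1)}$. Feeding this into \eqref{result_kmlm} and clearing the positive denominators $g(j)$, $2j$ and $i(i+1)$, the inequality collapses to
\[
i(i+1)(3j^2-3j+2)\;\le\;2j\bigl(j^3-3j+4+i^3-i^2\bigr).
\]
Finally, from the constraint together with $(i-1)^3\le(i-1)^2(i+2)\le i^3$, $i^2(i+3)\le(i+1)^3$ and $g(j)<j^3$, the integer $i$ is pinned between roughly $2^{1/3}j-1$ and $2^{1/3}j+1$; substituting $i=2^{1/3}j+O(1)$, the difference of the two sides equals $(6-3\cdot 2^{2/3})j^4+O(j^3)$, which is positive since $6>3\cdot 2^{2/3}$, and a crude explicit lower bound finishes the case for $j\ge 96$.

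\emph{Case $m\le -100$.} Now $(k_m,\ell_m)=B(p_j,q_j)=(q_j-\tfrac12,\,p_j+\tfrac12)$ with $j:=-m-4\ge 96$; writing $r_j:=1-q_j$,
\[
A(k_m,\ell_m)=\left(\frac{1-2r_j}{6-4r_j},\;\frac12+\frac{p_j+1/2}{3-2r_j}\right),
\]
which differs from $(\tfrac16,\tfrac23)=A(\tfrac12,\tfrac12)$ by $O(r_j)=O(j^{-2})$ in each coordinate. Since $p_j$ and $r_j$ are decreasing in $j$, for $j\ge 96$ one has $r_j\le r_{96}$ and $p_j\le p_{96}$, which suffices to place $\tfrac{1-2r_j}{6-4r_j}$ in $(k_0,k_{-1}]$, so $N=-1$ for every such $m$ and \eqref{result_kmlm} asserts precisely that $A(k_m,\ell_m)$ lies on or above the edge joining $(k_{-1},\ell_{-1})=B(k_1,\ell_1)$ and $(k_0,\ell_0)=(\tfrac{13}{84},\tfrac{55}{84})$. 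A direct rational computation shows $(\tfrac16,\tfrac23)$ lies strictly above that edge, by more than $\tfrac1{50}$, and the edge has slope of absolute value less than $1$; since for $j\ge 96$ the displacement of $A(k_m,\ell_m)$ from $(\tfrac16,\tfrac23)$ is well under $\tfrac1{400000}$ in the first coordinate and $\tfrac1{2000}$ in the second, the point stays above the edge and \eqref{result_kmlm} follows.

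\emph{Main obstacle.} The delicate part is the bookkeeping in the case $m\ge 100$: getting \eqref{result_kmlm} into the clean degree-$4$ polynomial form hinges on the slope cancellation, and one must check that the elementary estimates relating $i$ and $j$ extracted from the constraint $(i-1)^2(i+2)\le 2g(j)<i^2(i+3)$ are tight enough to make the final inequality go through. The case $m\le -100$ is soft by comparison, because $A(k_m,\ell_m)$ is forced into a tiny neighbourhood of the interior point $(\tfrac16,\tfrac23)$.
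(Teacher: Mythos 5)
Your overall decomposition into $m\ge 100$ and $m\le -100$ matches the paper, but the internal technique is genuinely different. For $m\ge 100$ the paper never forms the polynomial inequality you derive; instead it observes that since the constraint already gives $\tfrac{k_m}{2k_m+2}>k_{N+1}$, it suffices to show $\tfrac{\ell_m}{2k_m+2}+\tfrac12>\ell_{N+1}$ (i.e.\ the image is in the upper--right quadrant of $(k_{N+1},\ell_{N+1})$), and establishes this via a chain of the form $m^{-3}<\cdots<2(N-5)^{-3}$ giving $m>2^{-1/3}(N-5)$. Your route --- reducing to $i(i+1)(3j^2-3j+2)\le 2j(j^3-3j+4+i^3-i^2)$ and exploiting the constraint-pinning $i\approx 2^{1/3}j$ --- is a valid alternative, and I verified that your reduction is correct despite the typo in the intermediate identity: what you need is $q_i+\tfrac{i(i-1)}{i+1}p_i=1-\tfrac{1}{i(i+1)}$ (a plus and a one-minus, not $q_i-\cdots=\tfrac{1}{i(i+1)}$, which is false). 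The final step (``a crude explicit lower bound finishes the case for $j\ge 96$'') is left to the reader; the leading coefficient $6-3\cdot 2^{2/3}\approx 1.24$ is comfortably positive and the claim is easy to believe, but you should supply the explicit estimate since the paper's Case~1 does analogous explicit work.

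For $m\le -100$ your identification $N=-1$ is \emph{correct}, and here you have actually caught an error in the paper's own proof: the paper asserts $\tfrac{k_m}{2k_m+2}\in(0.164,\tfrac16)\subset(k_1,k_0)$ and deduces $N=0$, but $k_0=13/84\approx 0.15476<0.164$, so the containment is false; in fact $k_0<\tfrac{k_m}{2k_m+2}\le k_{-1}\approx 0.19673$, forcing $N=-1$ exactly as you say. Your verification that $(\tfrac16,\tfrac23)$ lies above the edge $[(k_{-1},\ell_{-1}),(k_0,\ell_0)]$ by a vertical margin of about $0.021>\tfrac1{50}$, and that the edge has slope $\approx -0.75$, both check out. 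One numerical slip: the first-coordinate displacement of $A(k_m,\ell_m)$ from $\tfrac16$ is $\tfrac{-4r_j}{3(6-4r_j)}$, about $7\times 10^{-5}$ for $j=96$, not ``well under $\tfrac1{400000}=2.5\times 10^{-6}$''; the claimed bound would only hold for $j\gtrsim 500$. This does not endanger the conclusion, since $7\times 10^{-5}$ is still tiny compared with the $\approx 0.02$ vertical margin, but the stated constant is wrong by a factor of roughly $30$ and should be corrected.
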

\begin{proof}
Note that we necessarily have $N \ge m$, since 
\[
\frac{k_m}{2k_m + 2} < k_m
\]
as $k_m \in [0, 1/2]$, and thus $k_m > k_{N + 1}$, which implies $N + 1 > m$ as $k_n$ is decreasing.  
\subsubsection*{Case 1: $m \ge 100$} 
We have $N \ge m \ge 100$ and also $(k_n, \ell_n) = (p_{n - 4}, q_{n - 4})$ for $n \ge 100$. Thus
\begin{equation}\label{m_lower_bound}
m^{-3} < \frac{1}{(m - 5)^{2}(m - 2) + 2} \le \frac{2}{(N - 5)^{2}(N - 2)} < 2(N - 5)^{-3},
\end{equation}
where the second inequality follows from the assumption $k_m/(2k_m + 2) \le k_{N}$. However,
\begin{align*}
\frac{\ell_m}{2k_m + 2} + \frac{1}{2} &= 1 - \frac{3m^{2} - 27 m + 62}{2(m - 4)(m^{3} - 12 m^{2} + 45 m - 48)} > 1 - \frac{3}{2}(m - 4)^{-2},
\end{align*}
where the RHS is increasing for $m \ge 100$, so using $m > 2^{-1/3}(N - 5)$ from \eqref{m_lower_bound} gives
\begin{equation}\label{lN1_bound}
\frac{\ell_m}{2k_m + 2} + \frac{1}{2} > 1 - \frac{3}{2^{1/3}(N - 2^{7/3} - 5)^2} > 1 - \frac{3N - 11}{(N - 4)(N - 3)(N - 1)} = \ell_{N + 1},
\end{equation}
where the last inequality follows from a direct calculation. Meanwhile, using $k_m /(2k_m + 2) > k_{N + 1}$, we obtain
\[
k_N\ell_{N + 1} - \ell_N k_{N + 1} - \frac{k_m}{2k_m + 2}(\ell_{N + 1} - \ell_N) < \ell_{N + 1}(k_N - k_{N + 1}).
\]
The desired result follows from substituting \eqref{lN1_bound}. 

\subsubsection*{Case 2: $m \le -100$}
For this range of $m$ we have $k_m \in (0.49, \frac{1}{2}]$ so 
\begin{equation}\label{km_bound}
\frac{k_m}{2k_m + 2} \in (0.164, \tfrac{1}{6}) \subset (k_1, k_0)
\end{equation}
and hence $N = 0$. Furthermore, since $k_m \le \frac{1}{2} \le \ell_m$,
\begin{equation}\label{lm_bound}
\frac{\ell_m}{2k_m + 2} + \frac{1}{2} \ge \frac{2}{3}.
\end{equation}
The desired bound follows from substituting into \eqref{result_kmlm} the values $N = 0$, $(k_0, \ell_0) = \left(\frac{13}{84}, \frac{55}{84}\right)$, $(k_1, \ell_1) = \left(\frac{4742}{38463}, \frac{35731}{51284}\right)$, \eqref{km_bound} and \eqref{lm_bound}. 
\end{proof}

\begin{lemma}\label{C_transform_lem}
Let $(k_n, \ell_n)$ be as defined in \eqref{knln_defn}. If $|m| \ge 100$ and $N$ are integers such that $k_{N + 1} < \frac{k_m}{12(1 + 4k_m)} \le k_N$, then
\begin{equation}\label{result_kmlm_C}
\frac{k_m}{12(1 + 4k_m)}(\ell_{N + 1} - \ell_N) + \left(\frac{\ell_{m}}{12(1 + 4k_m)} + \frac{11}{12}\right)(k_N - k_{N + 1}) \ge k_N\ell_{N + 1} - \ell_N k_{N + 1}.
\end{equation}
\end{lemma}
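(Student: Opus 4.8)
The plan is to mimic the proof of Lemma~\ref{kmlm_induction_lem}, with the $A$-transform replaced throughout by the $C$-transform. Write $k_m' := \frac{k_m}{12(1+4k_m)}$ and $\ell_m' := \frac{\ell_m}{12(1+4k_m)} + \frac{11}{12}$, and record the identity
\[
\ell_m' = 1 - \frac{1}{12}\cdot\frac{1 + 4k_m - \ell_m}{1 + 4k_m},
\]
which displays $\ell_m'$ as a quantity lying just below $1$. Two preliminary facts are obtained exactly as in Lemma~\ref{kmlm_induction_lem}: since $12(1+4k_m) > 1$ we have $k_m' < k_m$, so the hypothesis $k_m' > k_{N+1}$ forces $N \ge m$ (as $k_n$ is strictly decreasing in $n$); and the hypothesis $k_m' > k_{N+1}$ gives
\[
k_N\ell_{N+1} - \ell_N k_{N+1} - k_m'(\ell_{N+1} - \ell_N) < \ell_{N+1}(k_N - k_{N+1}),
\]
so that whenever $\ell_m' \ge \ell_{N+1}$ the claim \eqref{result_kmlm_C} follows at once. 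I would then split into $m \ge 100$ and $m \le -100$. In the first regime I would establish $\ell_m' \ge \ell_{N+1}$; in the second (where $N$ will turn out to equal $9$ and $\ell_m' \ge \ell_{N+1}$ actually fails) I would verify \eqref{result_kmlm_C} directly.

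\emph{Case $m \ge 100$.} Here $N \ge m \ge 100$, so $(k_m,\ell_m) = (p_{m-4},q_{m-4})$ and $(k_n,\ell_n) = (p_{n-4},q_{n-4})$ for $n = N, N+1$. Since $k_m = p_{m-4} < 3\cdot 10^{-6}$, we have $12 < 12(1+4k_m) < 12.001$, so the hypothesis $k_m' \le k_N$ gives $\frac{2}{(N-5)^2(N-2)} > \frac{p_{m-4}}{12.001}$, i.e.\ $(N-5)^2(N-2) < 12.001\,(m-5)^2(m-2)$, from which one reads off the crude bound $N < 2.3\,m$ (valid for all $m \ge 100$). From the identity above together with $1+4k_m > 1$,
\[
\ell_m' > 1 - \tfrac{1}{12}\bigl(1+4k_m-\ell_m\bigr) = 1 - \tfrac{1}{12}\bigl(4p_{m-4} + (1-q_{m-4})\bigr),
\]
and a direct estimate shows $4p_{m-4} + (1-q_{m-4}) < 3.3\,(m-5)^{-2}$ for $m \ge 100$, hence $\ell_m' > 1 - 0.28\,(m-5)^{-2}$. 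Meanwhile
\[
\ell_{N+1} = q_{N-3} = 1 - \frac{3N-11}{(N-3)(N-4)(N-1)} \le 1 - 2N^{-2}\qquad(N \ge 11),
\]
and combining $N < 2.3\,m$ with $m \ge 100$ yields $2N^{-2} > 0.28\,(m-5)^{-2}$, so $\ell_{N+1} \le \ell_m'$ and \eqref{result_kmlm_C} follows.

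\emph{Case $m \le -100$.} As in Lemma~\ref{kmlm_induction_lem}, Case~2, we have $k_m \in (0.49,\tfrac{1}{2}]$; moreover $\ell_m > \tfrac{1}{2}$, since $(k_m,\ell_m) = B(k_{-m},\ell_{-m})$ gives $\ell_m = k_{-m} + \tfrac{1}{2}$ with $k_{-m} = p_{-m-4} > 0$. Because $x \mapsto x/(12(1+4x))$ is increasing, $k_m' \in \bigl(\tfrac{0.49}{35.52},\tfrac{0.5}{36}\bigr) \subset (k_{10},k_9) = (\tfrac{1}{100},\tfrac{1}{56})$, so $N = 9$. Substituting $(k_9,\ell_9) = (\tfrac{1}{56},\tfrac{127}{140})$ and $(k_{10},\ell_{10}) = (\tfrac{1}{100},\tfrac{14}{15})$ and clearing denominators, \eqref{result_kmlm_C} reduces to $1100\,k_m' + 330\,\ell_m' \ge 319$. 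Now $\ell_m > \tfrac{1}{2}$ and $12(1+4k_m) \le 36$ give $\ell_m' > \tfrac{1}{72} + \tfrac{11}{12} = \tfrac{67}{72}$, so $330\,\ell_m' > 307$, while $k_m' > \tfrac{0.49}{35.52} > 0.0137$ gives $1100\,k_m' > 15$; hence the left-hand side exceeds $322 > 319$, as required.

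The one genuinely delicate step is the case $m \ge 100$. The $C$-transform contracts the $k$-coordinate by a factor of about $12$, rather than the factor of about $2$ for the $A$-transform, so the facet of $H$ met by $C(k_m,\ell_m)$ can carry an index $N$ as large as roughly $12^{1/3}m \approx 2.29\,m$, and one must verify that $1 - \ell_m' = \Theta(m^{-2})$ is nonetheless small enough to clear $1 - \ell_{N+1} = \Theta(N^{-2})$ uniformly over this range. Carrying the implied constants through, as sketched above, shows the inequality $\ell_m' \ge \ell_{N+1}$ survives with a small positive margin for every $m \ge 100$; this is what makes the finite verification over $|m| < 100$ carried out in the proof of Lemma~\ref{H_closed_A_lem} sufficient.
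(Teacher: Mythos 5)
Your proof is correct and follows essentially the same two-case decomposition as the paper's own argument. In both proofs, for $m\ge 100$ one establishes $N\ge m\ge 100$, deduces from $k_m'\le k_N$ a bound of the form $N\ll 12^{1/3}m$ (you get $N<2.3m$, the paper settles for the looser $N<3m$), and then shows $\ell_m'\ge \ell_{N+1}$ via explicit lower bounds of order $1-\Theta(m^{-2})$ and $1-\Theta(N^{-2})$; and for $m\le -100$ one pins down $N=9$ from $k_m'\in(k_{10},k_9)$ and verifies the linear inequality directly from the numerical bounds $k_m'>0.0137$ and $\ell_m'\ge 67/72$. The only differences are the slightly sharper explicit constants you carry in the $m\ge 100$ case, and that you state the ``$\ell_m'\ge\ell_{N+1}$ suffices'' reduction up front rather than deferring to the analogous step in the proof of Lemma~\ref{kmlm_induction_lem}; the substance is identical.
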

\begin{proof}
Proceeding similarly to the proof of Lemma \ref{kmlm_induction_lem}, consider first the case when $m \ge 100$. 
The bound $\frac{k_m}{12(1 + 4k_m)} \le k_N$ implies, together with $(k_n, \ell_n) = (p_{n - 4}, q_{n - 4})$ for $n \ge 100$, that 
\[
\frac{1}{9}m^{-3} < \frac{1}{6((m - 5)^{2}(m - 2) + 8)} \le \frac{2}{(N - 5)^{2}(N - 2)} < 3N^{-3}
\]
where the first and last inequalities follow from $m \ge 100$ and $N \ge 100$ respectively. This implies $m > N/3$, so that, as before
\begin{align*}
\frac{\ell_{m}}{12(1 + 4k_m)} + \frac{11}{12} &= 1 - \frac{3 m^{2} - 21 m + 38}{12(m - 4)(m^{3} - 12 m^{2} + 45 m - 42)} > 1 - \frac{1}{3m^2}\\
&> 1 - \frac{3}{N^2} \ge 1 - \frac{3N - 11}{(N - 4)(N - 3)(N - 1)} = \ell_{N + 1}.
\end{align*}
The rest of the argument proceeds as per Lemma \ref{kmlm_induction_lem}.

Next suppose $m \le -100$. Then, by \eqref{knln_defn} we have $\frac{k_m}{12(1 + 4k_m)} > \frac{13}{1000}$ and $\frac{\ell_m}{12(1 + 4k_m)} + \frac{11}{12} \ge \frac{67}{72}$. Furthermore, since $k_m \in (0.49, \frac{1}{2}]$, $k_{10} < \frac{k_m}{12(1 + 4k_m)} < k_9$ so that $N(m) = 9$ for all $m \le -100$. The result follows from these bounds and $(k_9, \ell_9) = \left(\frac{1}{56}, \frac{127}{140}\right)$, $(k_{10}, \ell_{10}) = \left(\frac{1}{100}, \frac{14}{15}\right)$.
\end{proof}

\begin{lemma}\label{H_contains_lem}
The set $H$ contains all known exponent pairs of the form \eqref{BI_exp_pair}, \eqref{huxley_pair_1}, \eqref{huxley_pair_2}, \eqref{kth_deriv_test_exp_pair}, \eqref{hb_exponent_pair1} and \eqref{hb_exponent_pair}.
\end{lemma}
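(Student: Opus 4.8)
The plan is to lean entirely on the structural facts already in hand: $H$ is convex (Lemma~\ref{h_theta_convex_lem}); $H$ is symmetric about $\ell = k + \tfrac12$, hence closed under $B$; and $H$ is closed under both $A$ and $C$ (Lemma~\ref{H_closed_A_lem}). The key simplification is that membership of a point $(k,\ell) \in [0,\tfrac12]\times[\tfrac12,1]$ in $H$ is equivalent to a finitely checkable pair of conditions: $k+\ell \le 1$ (so $(k,\ell)$ lies on or below the segment joining $(0,1)$ and $(\tfrac12,\tfrac12)$), together with the single linear inequality \eqref{H_condition} for the unique $N$ with $k_{N+1} < k \le k_N$, which places $(k,\ell)$ on or above the edge of $H$ joining $(k_N,\ell_N)$ and $(k_{N+1},\ell_{N+1})$. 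We would then verify the six families in turn.

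The Bombieri--Iwaniec pairs \eqref{BI_exp_pair} are immediate: every listed value satisfies $\tfrac{13}{84} \le \theta \le \tfrac14$, so $(\theta,\tfrac12+\theta)$ lies on the line segment from $(k_0,\ell_0) = (\tfrac{13}{84},\tfrac{55}{84})$ to $(\tfrac14,\tfrac34)$, both endpoints of which are in $H$ ($(k_0,\ell_0)$ is a vertex and $(\tfrac14,\tfrac34)$ lies on the boundary segment from $(0,1)$ to $(\tfrac12,\tfrac12)$); convexity of $H$ then contains the whole segment. For the finite list \eqref{huxley_pair_1}, for the finitely many pairs \eqref{kth_deriv_test_exp_pair} recorded in Table~\ref{exppair_table}, and for the two pairs $(p_3,q_3) = (\tfrac1{10},\tfrac{23}{30})$ and $(p_4,q_4) = (\tfrac1{27},\tfrac{31}{36})$ of \eqref{hb_exponent_pair}, one simply locates each rational point between the appropriate consecutive vertices of $H$ and checks \eqref{H_condition} and $k+\ell\le1$ by direct computation (reusing the verification code of \S\ref{verify_program}); the remaining members $(p_m,q_m)$ with $m\ge5$ of \eqref{hb_exponent_pair} need no argument, being by definition the vertices $(k_n,\ell_n)$ with $n\ge9$.

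This leaves the two infinite families. For \eqref{huxley_pair_2} the recursion built into the formula makes it an $A$-orbit: a short computation with $k_m = 169/(1424\cdot 2^m - 338)$ shows $(k_{m+1},\ell_{m+1}) = A(k_m,\ell_m)$ for all $m\ge1$, so it suffices to verify that the $m=1$ member $(\tfrac{169}{2510},\tfrac{1400279}{1787120})$ satisfies \eqref{H_condition} (against the edge between $(k_4,\ell_4)$ and $(k_5,\ell_5)$) and $k+\ell<1$, after which closure under $A$ (Lemma~\ref{H_closed_A_lem}) delivers all $m\ge1$. For the Heath--Brown pairs \eqref{hb_exponent_pair1} one checks the finitely many small-index members directly, and for the tail uses the asymptotics near $(0,1)$: here $k+\ell\le1$ holds throughout (with equality only at $m=3$, which lands on the boundary segment), while $1-b_m = \tfrac1{25m^2\log m}$ and $a_m = (1-b_m)/(m-2)$ together give $1-b_m \asymp a_m^{2/3}/(\log(1/a_m))^{1/3}$; since the lower-left boundary of $H$ in a neighbourhood of $(0,1)$ is the polygonal path through the vertices $(p_n,q_n)$, along which $1-\ell \asymp k^{2/3}$, and $a_m^{2/3}/(\log(1/a_m))^{1/3} = o(a_m^{2/3})$, the pair $(a_m,b_m)$ lies strictly above this boundary once $m$ is large enough. (The same $o(k^{2/3})$ comparison, with $1-\ell \asymp k\log(1/k)$, furnishes an alternative treatment of the tail of \eqref{huxley_pair_2}.)

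The only part requiring more than a finite verification or a one-line convexity argument is the last one: turning the soft $o(k^{2/3})$ comparison for \eqref{hb_exponent_pair1} into an explicit bound on how large $m$ must be before tail membership is automatic, and then carrying out the finitely many small-index verifications (and, likewise, the base case for \eqref{huxley_pair_2} and the two small cases of \eqref{hb_exponent_pair}). Once that threshold is pinned down, every remaining step is either pure convexity (the \eqref{BI_exp_pair} segment) or a bounded number of evaluations of the single linear inequality \eqref{H_condition}, so presents no real difficulty.
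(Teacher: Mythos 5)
Your proposal is correct, and in two places it is genuinely cleaner than the paper's argument. The paper reduces everything to the statement that the finite convex hull $H_{1000}$ (with vertices $\{(k_n,\ell_n)\}_{|n|\le 1000}\cup\{(0,1),(\tfrac12,\tfrac12)\}$) is contained in $H$, then verifies numerically that \eqref{BI_exp_pair}, \eqref{huxley_pair_1}, \eqref{kth_deriv_test_exp_pair}, and the $m\le 100$ members of \eqref{huxley_pair_2}, \eqref{hb_exponent_pair1}, and $m=3,4$ of \eqref{hb_exponent_pair} all lie in $H_{1000}$; for the tails of \eqref{huxley_pair_2} and \eqref{hb_exponent_pair1} ($m>100$) it exhibits the explicit region $R=\{k^{2/3}+\ell\ge1,\ k+\ell\le1,\ k\le k_{100}\}$, proves $R\subseteq H$ by comparing the curve $k^{2/3}+\ell=1$ to the polygonal boundary through the $(p_n,q_n)$, and checks that both tails land in $R$. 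Your treatment of the two infinite families diverges from this in useful ways. For \eqref{BI_exp_pair} you note all the listed $\theta$ lie in $[\tfrac{13}{84},\tfrac14]$, so the whole family sits on the chord from $(k_0,\ell_0)$ to $(\tfrac14,\tfrac34)$ and membership follows from convexity alone --- a one-line argument where the paper just folds these into the numerical check. For \eqref{huxley_pair_2} you make the nice observation that the family is precisely the forward $A$-orbit of the $m=1$ pair $(\tfrac{169}{2510},\tfrac{1400279}{1787120})$, so Lemma \ref{H_closed_A_lem} plus one base-case check disposes of the entire tail with no asymptotics at all; the paper does not notice this and instead runs the same region-$R$ machinery used for \eqref{hb_exponent_pair1}. (One may verify the recursion directly: $2k_m+2=\frac{1424\cdot2^{m+1}-338}{1424\cdot2^m-338}$, from which $\tfrac{k_m}{2k_m+2}=k_{m+1}$ and $\tfrac{\ell_m}{2k_m+2}+\tfrac12=\ell_{m+1}$ both drop out.) For \eqref{hb_exponent_pair1} your $1-b_m\asymp a_m^{2/3}/(\log(1/a_m))^{1/3}=o(a_m^{2/3})$ comparison is exactly the paper's region-$R$ idea in asymptotic form; as you note, it needs an explicit threshold, and the paper's choice $m>100$ with the inequality $a_m^{2/3}+b_m>1$ is precisely the quantified version. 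Your reduction of membership to the single edge inequality \eqref{H_condition} plus $k+\ell\le1$ is also sound, since the fan of edges indexed by $N$ covers every $k\in(0,\tfrac12)$. In short: same underlying strategy (locate each pair against the polygonal boundary, finite check plus tail estimate), but your $A$-orbit shortcut for \eqref{huxley_pair_2} and the convexity chord for \eqref{BI_exp_pair} are genuine simplifications the paper misses.
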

\begin{proof}
Recall that for a positive integer $N$, $H_N$ denotes the convex hull of the points 
\[
\{(k_n, \ell_n)\}_{|n| \le N} \cup\{(0, 1), (1/2, 1/2)\}.
\]
Since $H$ is convex by Lemma \ref{h_theta_convex_lem}, we have $H_N \subseteq H$, so to show that $(k, \ell) \in H$ it suffices to show that $(k, \ell) \in H_N$. In Program 1 (\S \ref{verify_program} below), we take $N = 1000$ and verify that $H_N$ contains
\begin{enumerate}
    \item all known exponent pairs of the form \eqref{BI_exp_pair}, \eqref{huxley_pair_1} and \eqref{kth_deriv_test_exp_pair},
    \item exponent pairs of the form \eqref{huxley_pair_2} and \eqref{hb_exponent_pair1} for $m \le 100$,
    \item exponent pairs of the form \eqref{hb_exponent_pair} for $m = 3, 4$.\label{exponent_family_4}
\end{enumerate}
Note that, with the exception of exponent pairs of the form \eqref{hb_exponent_pair1}, we make use of rational numbers in performing this verification so there is no potential for round-off errors. 

For $m \ge 5$, exponent pairs of the form \eqref{hb_exponent_pair} are given by $(p_{m}, q_{m}) = (k_{m + 4}, \ell_{m + 4})$ which lie in $H$ by construction. Thus, it only remains to verify that $H$ also contains exponent pairs of the form \eqref{huxley_pair_2} and \eqref{hb_exponent_pair1} for $m > 100$. We first show that $H$ contains the region 
\[
R = \{(k, \ell): k^{2/3} + \ell \ge 1, k + \ell \le 1, k \le k_{100}\}.
\]
Of the three constraints defining the boundary of $R$, only the first requires further elaboration. Note that if $(k, \ell)$ lies on the boundary of $H$ with $k + \ell < 1$ and $k \le k_{100}$, then 
\[
(k, \ell) = \lambda (k_n, \ell_n) + (1 - \lambda)(k_{n + 1}, \ell_{n + 1}),
\]
for some $\lambda \in [0, 1]$ and integer $n \ge 100$. However, as $(k_n, \ell_n) = (p_{n - 4}, q_{n - 4})$ in this region, 
\[
k \le k_n = \frac{2}{(n - 5)^{2}(n - 2)} < \frac{2^{3/2}}{n^3}
\]
and so
\[
\frac{1 - k^{2/3}}{\ell} > \frac{1 - 2n^{-2}}{\ell_{n + 1}} = 1 + \frac{n^{3} + 5 n^{2} - 38 n + 24}{n^{2}(n^{3} - 8 n^{2} + 16 n - 1)} > 1
\]
for $n \ge 100$. Thus, for all such points we have $k^{2/3} + \ell < 1$, which shows that $R \subseteq H$. 

Next, with $(a_m, b_m)$ as defined in \eqref{hb_exponent_pair1}, observe that $(a_m, b_m) \in R$ for $m > 100$, since
\[
a_m + b_m < 1, \qquad a_m < a_{100} < k_{100},
\]
\[
a_m + b_m^{2/3} = 1 + \frac{1}{(25m^2 (m - 2) \log m)^{2/3}} - \frac{1}{25m^2 \log m} > 1,\qquad (m > 100).
\]
Therefore, $(a_m, b_m) \in H$ for $m \ge 100$. A similar verification confirms that $H$ contains \eqref{huxley_pair_2} for $m > 100$. 
\end{proof}

\section{Proof of theorems}

In this section we prove results related to applications of exponent pairs outlined in \S \ref{sec:applications}. 

\subsection{Proof of Theorem \ref{M_bound_12_thm}}\label{sec5.1}
As usual we proceed by bounding how frequently $\zeta(1/2 + it)$ can be large. Let $1/2 \le \sigma < 1$, $T > 0$, $V > 0$ and suppose $t_1, \ldots, t_R$ are any points satisfying
\[
|\zeta(\sigma + it_r)| \ge V, \qquad |t_r| \le T,\qquad (1 \le r \le R),
\]
\[
|t_r - t_s| \ge 1,\qquad (1 \le r \ne s \le R).
\]
It is well-known that certain bounds on $R$ lead to bounds on moments of $\zeta(s)$. As per \cite[\S 8.1]{ivic_riemann_2003}, the following statements are equivalent
\begin{equation}\label{equiv_formula_1}
\int_1^T|\zeta(\sigma + it)|^{b}\text{d}t \ll_{\varepsilon} T^{a + \varepsilon},
\end{equation}
\begin{equation}\label{equiv_formula_2}
\sum_{r \le R}|\zeta(\sigma + it_r)|^b \ll_{\varepsilon} T^{a + \varepsilon},
\end{equation}
\begin{equation}\label{equiv_formula_3}
R \ll_{\varepsilon} T^{a + \varepsilon}V^{-b + \varepsilon},
\end{equation}
where $a$ and $b$ may depend on $\sigma$. Note that in \eqref{equiv_formula_3} we have $T^{a + \varepsilon}V^{-b + \varepsilon}$ in place of Ivi\'{c}'s $T^{a + \varepsilon}V^{-b}$, which are equivalent since $V \le T$. In fact, if $\zeta(\sigma + it) \ll_{\varepsilon} t^{c(\sigma) + \varepsilon}$ for some $c(\sigma) > 0$, then we may assume throughout that $V \le T^{c(\sigma) + \varepsilon}$, for otherwise $R \ll 1$ and 
\[
\sum_{r \le R}|\zeta(\sigma + it_r)|^A \ll_{\varepsilon} T^{A\, c(\sigma) + \varepsilon}
\]
i.e. $M(A) \le A c(\sigma)$ which is stronger than all of the results of this section. 

The results of this section depend on upper bounds on $R$, such as the following, due to \cite[Thm.\ 8.2]{ivic_riemann_2003}.
\begin{lemma}\label{ivic_exponent_pair_R_bound}
For all exponent pairs $(k, \ell)$ with $k > 0$, and any $\varepsilon > 0$, 
\[
R \ll_\varepsilon T^{1 + \varepsilon}V^{-6} + T^{1 + \ell/k + \varepsilon}V^{-2(1 + 2k + 2\ell)/k}.
\]
\end{lemma}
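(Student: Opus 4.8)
The plan is to recognise this as a restatement of Ivi\'{c}'s large-values estimate \cite[Thm.\ 8.2]{ivic_riemann_2003} (the passage from $V^{-b}$ to $V^{-b+\varepsilon}$ being harmless, as noted after \eqref{equiv_formula_3}, since $V \le T$), and to reproduce the Halász--Montgomery argument behind it. First I would reduce the count of large values of $\zeta(\sigma + it)$ to the count of large values of a single short Dirichlet polynomial: by the approximate functional equation together with a dyadic decomposition, $|\zeta(\sigma + it_r)| \ge V$ forces $\bigl|\sum_{M < n \le M'} n^{-\sigma - it_r}\bigr| \gg V(\log T)^{-1}$ for some $M' \in (M, 2M]$ with $1 \le M \ll T$, and after pigeonholing over the $O(\log T)$ dyadic ranges one may fix a single value of $M$ that works for $\gg R(\log T)^{-1}$ of the points, at the cost of a factor $(\log T)^{O(1)}$ in the final bound. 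One may also assume $V \ge T^{\varepsilon/6}$, since otherwise $T^{1+\varepsilon}V^{-6} \ge T \gg R$ and there is nothing to prove; in particular the error term from the functional equation is then negligible.

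Next I would apply the Halász--Montgomery inequality to the coefficient vector $(n^{-\sigma})_{M < n \le M'}$ paired against the vectors $(n^{it_r})_n$, which yields
\[
R V^{2} \ll_\varepsilon T^{\varepsilon}\Bigl(\sum_{M < n \le M'} n^{-2\sigma}\Bigr)\max_{r}\sum_{s}\Bigl|\sum_{M < n \le M'} n^{i(t_r - t_s)}\Bigr|,
\]
where for $\sigma \ge 1/2$ the coefficient-energy factor is $\ll 1$. The diagonal terms $s = r$ contribute $O(M)$ each. For the off-diagonal terms the exponent pair enters: with $\tau := t_r - t_s$, the phase $x \mapsto \tfrac{\tau}{2\pi}\log x$ has the monomial shape of \eqref{exp_pair_hypothesis} (with the exponent there equal to $1$; the inequality \eqref{exp_pair_hypothesis} then holds with equality, for any admissible $P$ and $c$), so \eqref{exp_pair_defn} applied with $(k,\ell)$ gives $\bigl|\sum_{M < n \le M'} n^{i\tau}\bigr| \ll |\tau|^{k}M^{\ell-k}$ whenever $|\tau| \gg M$, while for $1 \le |\tau| < M$ the Kusmin--Landau first-derivative test gives $\ll M/|\tau|$. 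Summing these over the $1$-spaced points $t_s \in [-T, T]$ bounds the off-diagonal contribution by $O\bigl(M\log T + M^{\ell - k}T^{k+1}\bigr)$, hence $R \ll_\varepsilon T^{\varepsilon}\bigl(M + M^{\ell - k}T^{k+1}\bigr)V^{-2}$.

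This estimate is wasteful when $M$ is small, and there one instead argues through a higher power moment: cubing the polynomial and applying the Montgomery--Vaughan mean value theorem (the associated triple-divisor correlation contributing only a factor $M^{\varepsilon}$) gives $\sum_{r}\bigl|\sum_{M < n \le M'} n^{-\sigma - it_r}\bigr|^{6} \ll_\varepsilon T^{\varepsilon}(T + M^{3})$, whence $R \ll_\varepsilon T^{\varepsilon}(T + M^{3})V^{-6}$. Using this for $M \ll T^{1/3}$ (where it is $\ll T^{1+\varepsilon}V^{-6}$) and the exponent-pair bound for $M \gg T^{1/3}$, then interpolating between the two estimates and optimising over the admissible range $1 \le M \ll T$, one is led to a bound of the shape asserted in Lemma \ref{ivic_exponent_pair_R_bound}. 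I expect the main obstacle to be precisely this last step: organising the optimisation in $M$ so that the short-polynomial (power-moment) regime and the exponent-pair regime fit together to produce exactly the exponents $V^{-6}$ and $V^{-2(1+2k+2\ell)/k}$, uniformly for $\sigma \in [1/2, 1)$, while also verifying that every exponential-sum estimate invoked holds uniformly over the relevant class $\textbf{F}(N, P, \sigma, y, c)$ of phase functions. This bookkeeping is carried out in \cite[Ch.\ 8]{ivic_riemann_2003}, from which the lemma follows directly.
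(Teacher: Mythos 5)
Your proposal is correct and takes precisely the same route as the paper: the paper states this lemma as a direct citation of Ivi\'{c}'s \cite[Thm.~8.2]{ivic_riemann_2003} (specialised to $\sigma = 1/2$, noting $1 + \ell/k = (k+\ell)/k$) and gives no proof, which is exactly what you do, supplementing the citation with an accurate sketch of the underlying Hal\'{a}sz--Montgomery argument while deferring the $M$-optimisation bookkeeping to Ivi\'{c}'s Chapter 8.
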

Taking the sequence of exponent pairs in \eqref{hb_exponent_pair} and applying the $B$ process, we obtain
\[
(k, \ell) = \left(\frac{1}{2} - \frac{3m - 2}{m(m - 1)(m + 2)} + \varepsilon, \frac{1}{2} + \frac{2}{(m - 1)^2(m + 2)}\right),
\]
so that, by Lemma \ref{ivic_exponent_pair_R_bound}, for any integer $m \ge 3$ we have
\[
R \ll_\varepsilon T^{1 + \varepsilon}V^{-6} + T^{\phi_m + \varepsilon}V^{-(12 + \delta_m) + \varepsilon}
\]
where
\[
\phi_m =  2 + \frac{2}{m - 2} - \frac{2}{m - 1} + \frac{4}{m^2 + 3 m - 2},
\]
\[
\delta_m = \frac{12}{m - 2} - \frac{8}{m - 1} - \frac{4 (m - 5)}{m^2 + 3 m - 2}.
\]
Via a routine calculation, we find that for $m \ge 6$ we have 
\[
\delta_m^{-3/2}\left(\phi_m - 2 - \frac{\delta_m}{8}\right) = \frac{m(m^4 - 9 m^2 + 12m - 4)^{1/2}}{32(3m^2 - 4m + 2)^{3/2}} \le \frac{3}{344}\sqrt{\frac{65}{86}}.
\]
Meanwhile, via convexity we also have, for any $\lambda \in [0, 1]$, 
\[
R \ll_\varepsilon T^{1 + \varepsilon}V^{-6} + T^{\lambda\phi_m + (1 - \lambda)\phi_{m + 1}+ \varepsilon}V^{-(12 + \lambda\delta_m + (1 - \lambda)\delta_{m + 1}) + \varepsilon}
\]
so that, via the equivalence of \eqref{equiv_formula_1} and \eqref{equiv_formula_3},
\begin{align*}
&M(12 + \lambda\delta_m + (1 - \lambda)\delta_{m + 1}) \le \lambda\phi_m + (1 - \lambda)\phi_{m + 1} \\
&\qquad\qquad \le 2 + \frac{\lambda \delta_m + (1 - \lambda)\delta_{m + 1}}{8} + \frac{3}{344}\sqrt{\frac{65}{86}}(\lambda \delta_m^{3/2} + (1 - \lambda)\delta_{m + 1}^{3/2}) \\
&\qquad\qquad \le 2 + \frac{\lambda \delta_m + (1 - \lambda)\delta_{m + 1}}{8} + C(\lambda \delta_m + (1 - \lambda)\delta_{m + 1})^{3/2}
\end{align*}
where 
\[
C = \frac{3}{7568}\sqrt{510}.
\]
Since $\delta_m > \delta_{m + 1}$, the last inequality follows from
\begin{align*}
\frac{(\lambda\delta_m^{3/2} + (1 - \lambda)\delta_{m + 1}^{3/2})^{2/3}}{\lambda\delta_m + (1 - \lambda)\delta_{m + 1}} < \frac{\delta_m^{1/3}(\lambda\delta_m + (1 - \lambda)\delta_{m + 1})^{2/3}}{\lambda\delta_m + (1 - \lambda)\delta_{m + 1}} \le \frac{\delta_m^{1/3}}{\delta_{m + 1}^{1/3}} \le \left(\frac{2193}{1573}\right)^{1/3},
\end{align*}
since $m \ge 6$.
Therefore, we have 
\[
M(12 + \delta) \le 2 + \frac{\delta}{8} + C\delta^{3/2},\qquad 0 < \delta \le \frac{86}{65},
\]
which completes the proof. 

\subsection{Proof of Theorem \ref{M_bound_larger_A}}
If $(k, \ell)$ is an exponent pair then by Lemma \ref{ivic_exponent_pair_R_bound} we have
\begin{equation}\label{thm_M_bound_R_bound}
R \ll_{\varepsilon} T^{1 + \varepsilon}V^{-6} + T^{1 + \ell/k + \varepsilon}V^{-2(1 + 2k + 2\ell)/k} \ll_{\varepsilon} T^{1 + \ell/k + \varepsilon}V^{-2(1 + 2k + 2\ell)/k}
\end{equation}
if $V \le T^{\ell/(4\ell - 2k + 2)}$. This is always the case for $V \le T^{13/84 + \varepsilon}$, since 
\[
\frac{\ell}{4\ell - 2k + 2} \ge \frac{\ell}{4\ell + 1} \ge \frac{1}{6}.
\]
Writing
\[
A = 4 + \frac{2 + 4\ell}{k},
\]
it follows from \eqref{thm_M_bound_R_bound} and \eqref{equiv_formula_3} that 
\begin{equation}\label{MA_bound_eqn}
M(A) \le \frac{A}{4} - \frac{1}{2k}.
\end{equation}
Thus the optimisation problem we consider is (for each fixed $A \ge 12$)
\[
\min_{\substack{(k, \ell) \in H\\4 + (2 + 4\ell)/k = A}}\left(\frac{A}{4} - \frac{1}{2k}\right).
\]
It suffices to solve 
\[
\min_{\substack{(k, \ell) \in H\\4 + (2 + 4\ell)/k = A}} k = \min_{(k, \ell) \in H}\frac{2 + 4\ell}{A - 4} = \frac{2}{A - 4} + 4\min_{(k, \ell) \in H}\ell.
\]
Therefore, the solution lies on the boundary of $H$. If $(\kappa_1, \lambda_1)$ and $(\kappa_2, \lambda_2)$ are exponent pairs with $\kappa_2 > \kappa_1$, then by convexity so is 
\[
\left(k, \frac{\lambda_1 - \lambda_2}{\kappa_1 - \kappa_2}(k - \kappa_2) + \lambda_2\right), \qquad \kappa_1 \le k \le \kappa_2.
\]
Substituting this exponent pair into $4 + (2 + 4\ell)/k = A$ gives
\[
k = \frac{2(2\kappa_1\lambda_2 - 2 \kappa_2\lambda_1 + \kappa_1 - \kappa_2)}{(A - 4)(\kappa_1 - \kappa_2) - 4\lambda_1 + 4\lambda_2},
\]
and hence by \eqref{MA_bound_eqn},
\[
M(A) \le \frac{A (\kappa_1\lambda_2 - \kappa_2\lambda_1) + 2 (\kappa_1 - \kappa_2 + \lambda_1 - \lambda_2)}{2(2\kappa_1\lambda_2 - 2\kappa_2\lambda_1 + \kappa_1 - \kappa_2)},\quad 4 + \frac{2 + 4\lambda_2}{\kappa_2} \le A \le 4 + \frac{2 + 4\lambda_1}{\kappa_1}.
\]
As usual let $(k_n, \ell_n)$ denote the vertices of $H$, defined in \eqref{knln_defn}. We take $(\kappa_1, \lambda_1) = (k_{n + 1} + \varepsilon, \ell_{n + 1} + \varepsilon)$ and $(\kappa_2, \lambda_2) = (k_n + \varepsilon, \ell_n + \varepsilon)$ for $-10 \le n \le 1$ which gives the first twelve cases of Theorem \ref{M_bound_larger_A}. For example, in the case $n = 0$ we choose  
\[
(\kappa_1, \lambda_1) = \left(\frac{4742}{38463} + \varepsilon, \frac{35731}{51284} + \varepsilon\right),\qquad (\kappa_2, \lambda_2) = \left(\frac{13}{84} + \varepsilon, \frac{55}{84} + \varepsilon\right),
\]
which gives
\[
M(A) \le \frac{31A - 24}{196},\qquad\frac{440}{13} \le A \le \frac{203087}{4742}.
\]
Here we have used the fact that if $M(A) \le \theta + \varepsilon$ for any $\varepsilon > 0$, then $M(A) \le \theta$. It remains to show that
\begin{equation}\label{large_MA_bound}
M(A) \le 1 + \frac{13}{84}(A - 6)
\end{equation}
for $A > \frac{3516129}{65729}$. To prove \eqref{large_MA_bound} we follow the argument of Ivi\'{c} \cite[Thm.\ 8.3]{ivic_riemann_2003}, with the caveat that the original argument can only produce $M(A) \le 1 + c(A- 6)$ for $c \ge 4/25$. Fortunately, only a small modification is required, and we use this opportunity to generalise Ivi\'{c}'s argument. 

\begin{lemma}
Let $(\sqrt{13} - 3)/4 = 0.15138\ldots \le \theta < 1/4$. If $(\theta + \varepsilon, \theta + 1/2 + \varepsilon)$ is an exponent pair for any $\varepsilon > 0$, then 
\[
M(A) \le 1 + \theta(A - 6)
\]
for all $A \ge 8 + 4/\theta$. 
\end{lemma}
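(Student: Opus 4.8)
The plan is to mimic Ivić's proof of \cite[Thm.~8.3]{ivic_riemann_2003}, feeding the exponent pair $(\theta+\varepsilon,\tfrac12+\theta+\varepsilon)$ into the large-values estimate of Lemma~\ref{ivic_exponent_pair_R_bound} and keeping careful track of where the constraint on $\theta$ is used. Put $A_0 := 8 + 4/\theta$. Since the exponents appearing in Lemma~\ref{ivic_exponent_pair_R_bound} depend continuously on $(k,\ell)$, applying it with $(k,\ell)=(\theta+\eta,\tfrac12+\theta+\eta)$ and letting $\eta\to0$ gives, for every $\varepsilon>0$,
\[
R \ll_\varepsilon T^{1+\varepsilon}V^{-6} + T^{2 + \frac{1}{2\theta} + \varepsilon}V^{-A_0},
\]
because $\tfrac{\ell}{k}\to 1+\tfrac{1}{2\theta}$ and $\tfrac{2(1+2k+2\ell)}{k}\to\tfrac{2(2+4\theta)}{\theta}=A_0$. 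Moreover the exponent pair $(\theta+\varepsilon,\tfrac12+\theta+\varepsilon)$ gives $\zeta(1/2+it)\ll_\varepsilon t^{\theta+\varepsilon}$, so as explained in \S \ref{sec5.1} we may assume $V\le T^{\theta+\varepsilon}$ throughout, larger values of $V$ contributing nothing.

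To bound $I_{A_0}$ I would, via the equivalences \eqref{equiv_formula_1}--\eqref{equiv_formula_3}, estimate $\sum_{r\le R}|\zeta(1/2+it_r)|^{A_0}$ by splitting the $t_r$ into $O(\log T)$ dyadic classes according to the size of $|\zeta(1/2+it_r)|\asymp V\le T^{\theta+\varepsilon}$. For each such class the displayed bound gives $R(V)V^{A_0}\ll T^{1+\varepsilon}V^{A_0-6}+T^{2+\frac{1}{2\theta}+\varepsilon}$, and since $A_0-6=2+4/\theta>0$ and $V\le T^{\theta+\varepsilon}$, summing over the dyadic classes yields
\[
\sum_{r\le R}|\zeta(1/2+it_r)|^{A_0} \ll_\varepsilon T^{1+\theta(A_0-6)+\varepsilon}+T^{2+\frac{1}{2\theta}+\varepsilon} = T^{5+2\theta+\varepsilon}+T^{2+\frac{1}{2\theta}+\varepsilon}.
\]
This is the one point where the hypothesis enters: $2+\tfrac{1}{2\theta}\le 5+2\theta$ is equivalent to $4\theta^2+6\theta-1\ge0$, whose positive root is $(\sqrt{13}-3)/4$, so for $\theta\ge(\sqrt{13}-3)/4$ the first term dominates and hence $M(A_0)\le 5+2\theta=1+\theta(A_0-6)$.

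Finally, for $A>A_0$ I would combine the trivial bound with the pointwise estimate above:
\[
I_A \le \Bigl(\max_{1\le t\le T}|\zeta(1/2+it)|\Bigr)^{A-A_0} I_{A_0} \ll_\varepsilon T^{\theta(A-A_0)+\varepsilon}\,T^{5+2\theta+\varepsilon},
\]
and a direct computation gives $\theta(A-A_0)+5+2\theta=\theta A-\theta(8+4/\theta)+5+2\theta=1+\theta(A-6)$, which is the asserted bound. I do not foresee a genuine obstacle: the argument simply assembles Lemma~\ref{ivic_exponent_pair_R_bound}, the pointwise bound $\zeta(1/2+it)\ll_\varepsilon t^{\theta+\varepsilon}$, and the trivial/convexity bound for moments. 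The only thing requiring care is the passage from the two-term estimate for $R$ to the moment bound for $I_{A_0}$ — in particular recognising that it is precisely the positivity of $4\theta^2+6\theta-1$ that controls the secondary term $T^{2+\frac{1}{2\theta}}$, which is why the threshold $(\sqrt{13}-3)/4$ (rather than Ivić's $4/25$) appears.
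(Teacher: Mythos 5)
Your proof is correct and follows essentially the same line as the paper's: both feed the exponent pair $(\theta+\varepsilon,\tfrac12+\theta+\varepsilon)$ into Lemma~\ref{ivic_exponent_pair_R_bound}, cap $V$ at $T^{\theta+\varepsilon}$ via the pointwise bound $\zeta(1/2+it)\ll_\varepsilon t^{\theta+\varepsilon}$, and identify the threshold $(\sqrt{13}-3)/4$ as precisely the point where the secondary term $T^{2+1/(2\theta)}$ is absorbed into $T^{5+2\theta}$. The only cosmetic difference is in organization: the paper treats general $A\ge 8+4/\theta$ at once by splitting on whether $Z$ exceeds $T^{(2\theta+1)/(4\theta+8)}$, whereas you first establish $M(A_0)\le 5+2\theta$ at the endpoint $A_0=8+4/\theta$ and then pass to larger $A$ by inserting the pointwise bound; both routes yield the identical estimate.
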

\begin{proof}
First we note that $0.15138\ldots$ is not far from $13/84 = 0.15476\ldots$ and appears to be the current limit of the method. Let $\{t_r\}$, $R$, $V$ and $T$ be as defined in \S \ref{sec5.1}, and suppose $\{\tau_1, \ldots, \tau_{S}\}$ is the subset of $\{t_r\}$ satisfying
\[
Z < |\zeta(1/2 + i\tau_s)| \le 2Z,\qquad 1 \le s \le S
\]
for some $V \le Z \le T$. If $(\theta +\varepsilon, 1/2 + \theta + \varepsilon)$ is an exponent pair, then 
\[
\zeta(1/2 + it) \ll_{\varepsilon} t^{\theta + \varepsilon}.
\]
This is shown in \eqref{upper_bound_zeta_temp} in the proof of Theorem \ref{mu_est_thm} below. We may thus assume that $Z \le T^{\theta + \varepsilon}$. From Lemma \ref{ivic_exponent_pair_R_bound}, if $(k, \ell)$ is an exponent pair, then
\[
S \ll_{\varepsilon} T^{\varepsilon} (TZ^{-6} + T^{(k + \ell)/k}Z^{-2(1 + 2k + 2\ell)/k}).
\]
Numerically, we find that the best exponent pair in $H$ is $(\theta + \varepsilon, \frac{1}{2} + \theta + \varepsilon)$. Making this choice, and using $Z^\varepsilon < T^\varepsilon$, we have 
\[
S \ll_{\varepsilon} T^{\varepsilon}(TZ^{-6} + T^{2 + 1/(2\theta)}Z^{-(8 + 4/\theta)}).
\]
If $Z > T^{(2\theta + 1)/(4\theta + 8)}$ then $TZ^{-6} \gg T^{2 + 1/(2\theta)}Z^{-(8 + 4/\theta)}$ and hence
\[
S\ll_{\varepsilon} T^{1 + \varepsilon}Z^{-6}.
\]
It follows from $|\zeta(1/2 + i\tau_s)| \ll Z \le T^{\theta + \varepsilon}$ that
\[
\sum_{\tau_s}|\zeta(1/2 + i\tau_s)|^A \ll_{\varepsilon}T^{1 + \varepsilon}Z^{A-6} \ll_{\varepsilon} T^{1 + \theta(A - 6) + \varepsilon}. 
\]

On the other hand if $Z \le T^{(2\theta + 1)/(4\theta + 8)}$ then $S \ll_{\varepsilon} T^{2 + 1/(2\theta) + \varepsilon}Z^{-(8 + 4/\theta)}$. Therefore, using $Z \ll T^{\theta + \varepsilon}$,
\[
\sum_{\tau_s}|\zeta(1/2 + i\tau_s)|^A \ll_{\varepsilon}T^{2 + 1/(2\theta) + \varepsilon}Z^{A - (8 + 4/\theta)} \ll_{\varepsilon} T^{1 + \theta(A - 6) + \varepsilon}
\]
provided that $A \ge 8 + 4/\theta$ and 
\[
\frac{1}{2\theta} - 2\theta \le 3
\]
which is satisfied for all $\theta \ge (\sqrt{13} - 3)/4$. Therefore, in both cases we have (by taking $Z = T/2, T/4, T/8, \ldots$), 
\[
\sum_{r \le R}|\zeta(1/2 + it_r)|^A = \sum_{Z}\sum_{\tau_s}|\zeta(1/2 + i\tau_s)|^A \ll_{\varepsilon} \log T\cdot T^{1 + \theta(A - 6) + \varepsilon}.
\]
The claimed result therefore follows from the equivalence of \eqref{equiv_formula_1} and \eqref{equiv_formula_2}.
\end{proof}

\subsection{Proof of Theorem \ref{zeta_moment_thm}}
Ivi\'{c} and Zhai \cite{ivic_mean_2012} showed that if $(k, \ell)$ is an exponent pair satisfying $\ell + (2j - 1)k < 1$, then 
\[
\int_1^T|\zeta(1/2 + it)|^4|\zeta(\sigma + it)|^{2j}\text{d}t \ll_{\varepsilon} T^{1 + \varepsilon},\qquad \sigma > \frac{\ell - k + 6jk}{1 + 4j k}.
\]
In particular, to establish Theorem \ref{zeta_moment_thm} we take $j = 2$ and search for favourable exponent pairs by solving the optimisation problem 
\[
\min_{(k, \ell) \in H}\frac{\ell + 11k}{1 + 8 k}\qquad \text{s.t.}\qquad \ell + 3k < 1.
\]
The solution is $(k, \ell) = (p_5, q_5) = (\frac{1}{56}, \frac{127}{140} + \varepsilon)$, which gives the desired result.

\subsection{Proof of Theorem \ref{mu_est_thm}}
We begin with the standard argument that if $\ell \ge k + \sigma$, then $\zeta(\sigma + it) \ll_{\varepsilon} t^{(k + \ell - \sigma)/2 + \varepsilon}$, reproduced below for completeness. From the approximate functional equation for $\zeta(s)$ \cite{hardy_zeros_1921},
\[
\zeta(s) = \sum_{n \le \sqrt{t/(2\pi)}}n^{-s} + \chi(1 - s)\sum_{m \le \sqrt{t/(2\pi)}}m^{s - 1} + o(1),\qquad 1/2 \le \sigma < 1.
\]
where $\chi(1 - s) \ll t^{1/2 - \sigma}$. If $(k, \ell)$ is an exponent pair satisfying $\ell - k - \sigma \ge 0$, then for any $1 \le N \ll t^{1/2}$ and $\sigma \ge 1/2$,
\[
\sum_{N < n \le 2N}n^{-\sigma - it} \ll N^{-\sigma}\left(\frac{t}{N}\right)^k N^{\ell} = N^{\ell - k -\sigma}t^k \ll t^{(k + \ell - \sigma)/2},
\]
\begin{align*}
\chi(1 - s)\sum_{N < n \le 2N}n^{-1 + \sigma + it} \ll t^{1/2 - \sigma}N^{\sigma - 1}\left(\frac{t}{N}\right)^kN^\ell \ll t^{(k + \ell - \sigma)/2},
\end{align*}
and hence, via a dyadic division,
\begin{equation}\label{upper_bound_zeta_temp}
\zeta(\sigma + it) \ll t^{(k + \ell - \sigma)/2}\log t,\qquad \ell - k \ge \sigma \ge 1/2.
\end{equation}
Therefore, the optimisation problem we consider is 
\[
\min_{(k, \ell) \in H}\frac{k + \ell - \sigma}{2}\qquad \text{s.t.}\qquad \ell - k \ge \sigma. 
\]
The solution lies on the boundary of $H$ and we have 
\[
\mu(\ell - k) \le k.
\]
Substituting points of the form $(k_n, \ell_n)$, we obtain, for $0 \le n \le 10$, that
\[
\mu( \tfrac{1}{2} ) \leq \tfrac{13}{84},\qquad
\mu( \tfrac{88225}{153852} ) \leq \tfrac{4742}{38463},\qquad
\mu( \tfrac{521}{796} ) \leq \tfrac{18}{199},\qquad
\mu( \tfrac{53141}{76066} ) \leq \tfrac{2779}{38033},
\]
\[
\mu( \tfrac{3620}{5119} ) \leq \tfrac{715}{10238},\qquad
\mu( \tfrac{52209}{69128} ) \leq \tfrac{2371}{43205},\qquad
\mu( \tfrac{1389}{1736} ) \leq \tfrac{9}{217},\qquad
\mu( \tfrac{134765}{163248} ) \leq \tfrac{2779}{81624},
\]
\[
\mu( \tfrac{18193}{21906} ) \leq \tfrac{715}{21906},\qquad
\mu( \tfrac{249}{280} ) \leq \tfrac{1}{56},\qquad
\mu( \tfrac{277}{300} ) \leq \tfrac{1}{100}.
\]
Theorem \ref{mu_est_thm} then follows from the convexity property of $\mu(\sigma)$. Specifically, for any fixed $\sigma_1 < \sigma_2$, 
\[
\mu(\sigma) \le \frac{(\sigma_2 - \sigma)\mu(\sigma_1) + (\sigma - \sigma_1)\mu(\sigma_2)}{\sigma_2 - \sigma_1},\qquad (\sigma_1 \le \sigma \le \sigma_2).
\]

\subsection{Proof of Theorem \ref{zeta_bound_thm}}
Taking the exponent pair $(p_n, q_n) = (k_{n + 4}, \ell_{n + 4})$ for $n \ge 5$ and choosing $\sigma = \sigma_n := q_n - p_n$ in \eqref{upper_bound_zeta_temp}, we have $\mu(\sigma_n) \le \mu_n$, where 
\[
\sigma_n = 1 - \frac{3n^2 - 3n + 2}{n(n - 1)^2(n + 2)},\qquad \mu_n = \frac{2}{(n - 1)^2(n + 2)},\qquad (n \ge 5).
\]
Using the convexity of $\mu(\sigma)$, we have 
\begin{equation}\label{mu_B_bound}
\mu(\lambda\sigma_n + (1 - \lambda)\sigma_{n + 1}) \le \lambda\mu_n + (1 - \lambda)\mu_{n + 1} \le B(1 - \lambda\sigma_n - (1 - \lambda)\sigma_{n + 1})^{3/2}
\end{equation}
for all $\lambda \in [0, 1]$ and $n \ge 5$, where
\[
B := \max_{n \ge 5, \lambda \in [0, 1]}f(n, \lambda),\qquad f(n, \lambda) := \frac{\lambda\mu_n + (1 - \lambda)\mu_{n + 1}}{(1 - \lambda\sigma_n - (1 - \lambda)\sigma_{n + 1})^{3/2}}.
\]
For each fixed $n$, the function $f(n, \lambda)$ is maximised by 
\[
\lambda = \lambda_n := \frac{2\mu_n\sigma_{n + 1} - 2\mu_n - 3 \mu_{n + 1}\sigma_n + \mu_{n + 1}\sigma_{n + 1} + 2\mu_{n + 1}}{(\mu_n - \mu_{n + 1}) (\sigma_n - \sigma_{n + 1})}.
\]
Thus, for all $\lambda \in [0, 1]$ and $n \ge 5$,
\begin{align*}
f(n, \lambda) \le f(n, \lambda_n) &= \frac{2 (\mu_{n + 1} - \mu_n)^{3/2}}{3\sqrt{3} (\sigma_n - \sigma_{n + 1})(\mu_{n + 1} - \mu_n + \mu_n\sigma_{n + 1} - \mu_{n + 1}\sigma_n)^{1/2}} \\
&= \frac{2}{3^{3/2}}\frac{n^{1/2} (n + 1)^{3/2}}{n^2 + 1}.
\end{align*}
Note that the RHS is decreasing for $n \ge 5$, and $f(5, \lambda_5) = 2\sqrt{10}/13$. Hence, from \eqref{mu_B_bound},
\[
\mu(\sigma) \le \frac{2\sqrt{10}}{13}(1 - \sigma)^{3/2},\qquad \sigma \ge \sigma_5 = \frac{249}{280}.
\]
For $\sigma < 249/280$, the desired result follows from Theorem \ref{mu_est_thm}.

\subsection{Proof of Theorem \ref{zeta_bound_thm1}}
Using the same argument as in the proof of Theorem \ref{zeta_bound_thm}, we have 
\begin{equation}\label{mu_sigma_bound}
\mu(\sigma) \le \frac{2}{3^{3/2}}\frac{n^{1/2} (n + 1)^{3/2}}{n^2 + 1}(1 - \sigma)^{3/2},\qquad \sigma_n \le \sigma \le \sigma_{n + 1}.
\end{equation}
For $n \ge 33$, we have
\begin{align*}
\frac{2}{3^{3/2}}\frac{n^{1/2} (n + 1)^{3/2}}{n^2 + 1} &< \frac{2}{3^{3/2}} + \left(\frac{1}{3} + \frac{1}{100}\right)\left(\frac{3n^2 + 3n + 2}{n^2(n + 1)(n + 3)}\right)^{1/2} \\
&= \frac{2}{3^{3/2}} + \left(\frac{1}{3} + \frac{1}{100}\right)(1 - \sigma_{n + 1})^{1/2}
\end{align*}
where the inequality is verified by a routine calculation. In fact, we may replace the constant $1/3 + 1/100$ with $1/3 + \varepsilon$ for any $\varepsilon > 0$, provided we take $n$ sufficiently large (depending on $\varepsilon$). Therefore, for $\sigma \ge \sigma_{33} = 1 - 317/118272$, we have 
\[
\mu(\sigma) \le \left(\frac{2}{3^{3/2}} + \frac{103}{300}(1 - \sigma)^{1/2}\right)(1 - \sigma)^{3/2}
\]
as required.

\subsection{Proof of Corollary \ref{zero_density_bound_thm1}}
The results of this section depend on the following well-known lemma, due originally to Montgomery \cite[Thm.\ 12.3]{montgomery_topics_1971}. 

\begin{lemma}[Montgomery \cite{montgomery_topics_1971}]\label{monty_lemma_12_3}
Let
\[
M(\alpha, T) := \max_{\substack{\sigma \ge \alpha\\1 \le t \le T}}|\zeta(\sigma + it)|.
\]
Then, we have
\[
N(\sigma, T) \ll (M(\alpha, 8T) \log^5T)^{2(1 - \sigma)(3\sigma - 1 - 2\alpha)/((2\sigma - 1 - \alpha)(\sigma - \alpha))}\log^8T. 
\]
for all $1/2 \le \alpha \le 1$ and $\sigma \ge (\alpha + 1)/2$.
\end{lemma}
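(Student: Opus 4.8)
The statement is a classical zero-detection estimate of Montgomery, and I would prove it along the usual lines. Fix $1/2 \le \alpha \le 1$ and $\sigma \ge (1+\alpha)/2$, write $M := M(\alpha, 8T)$, and let $R$ count the zeros $\rho = \beta + i\gamma$ of $\zeta$ with $\beta \ge \sigma$ and $0 < \gamma \le T$. After a dyadic split of the $\gamma$-range and a thinning to a $1$-separated set of ordinates $\gamma_1 < \cdots < \gamma_{R'}$ (each step costing a factor $O(\log T)$, and together contributing to the eventual power $\log^8 T$), it suffices to bound the number $R'$ of such well-spaced zeros in a block $T' < \gamma \le 2T'$. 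The device is the mollifier $M_X(s) = \sum_{n \le X}\mu(n)n^{-s}$, with $X$ a length to be optimised, so that $\zeta(s)M_X(s) = 1 + \sum_{n > X}a_n n^{-s}$ with $|a_n| \le \tau(n)$.

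\emph{The detection step.} For $Y \ge X$ and a zero $\rho$, I would start from
\[
\frac{1}{2\pi i}\int_{(2)}\zeta(\rho+s)M_X(\rho+s)\Gamma(s)Y^s\,\mathrm{d}s = e^{-1/Y} + \sum_{n > X}a_n n^{-\rho}e^{-n/Y}
\]
and move the contour to $\Re s = \alpha - \beta$. The pole at $s = 0$ contributes $\zeta(\rho)M_X(\rho) = 0$, and the one at $s = 1 - \rho$ contributes a negligible term (the finitely many zeros with bounded $\gamma$ being handled trivially). On the new line $\Re(\rho + s) = \alpha$, and $|\Im(\rho + s)| \le 2T < 8T$ wherever $\Gamma(s)$ is not already negligible, so $|\zeta(\rho+s)| \le M$; combined with $|M_X(\alpha + it)| \ll X^{1-\alpha}(\log T)^{O(1)}$ and $\int |\Gamma(\alpha - \beta + it)|\,\mathrm{d}t \ll 1$ this bounds the shifted integral by $\ll M X^{1-\alpha}Y^{\alpha - \beta}(\log T)^{O(1)}$. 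Since $\beta - \alpha \ge \sigma - \alpha > 0$, the choice $Y = (cM X^{1-\alpha}(\log T)^{5})^{1/(\sigma - \alpha)}$ makes this term at most $1/2$, so each zero satisfies $\bigl|\sum_{X < n \le Z}a_n n^{-\rho}e^{-n/Y}\bigr| \gg 1$ with $Z := Y(\log T)^2$. This is the step in which $M(\alpha, 8T)$ and the power $\log^5 T$ enter; the hypothesis $\sigma \ge (1+\alpha)/2$ — equivalently $\sigma - \alpha \ge \tfrac12(2\sigma - 1 - \alpha) > 0$ — is exactly what keeps the exponent $1/(\sigma-\alpha)$ here finite and positive.

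\emph{The large-values step.} Decompose $(X, Z]$ into $O(\log T)$ dyadic blocks; for each zero some block $(N, 2N]$ carries $\gg 1/\log T$ of the sum, and after stripping the smooth monotone factor $n^{-\beta}e^{-n/Y}$ by partial summation — and removing the residual partial-sum supremum inside the block by a standard completion lemma, which perturbs the $\gamma_r$ by only $O(1)$ — one is left with $\gg R'(\log T)^{-O(1)}$ well-spaced ordinates in an interval of length $\ll T$ at which $\bigl|\sum_{N < n \le 2N}b_n n^{-i\gamma_r}\bigr| \gg V := N^\sigma(\log T)^{-O(1)}$, with $|b_n| \le \tau(n)$ and hence $G := \sum_{N < n \le 2N}|b_n|^2 \ll N(\log T)^{O(1)}$. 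Feeding this into the Hal\'asz--Montgomery (Huxley) large-values bound $R' \ll (GV^{-2} + G^3 T N^{-1}V^{-6})(\log T)^{O(1)}$, substituting the values of $G$ and $V$ together with the relation $Z = Z(X,M)$, and optimising over the free length $X$, yields $R' \ll (M(\log T)^{5})^{E}(\log T)^{O(1)}$ with $E = \frac{2(1-\sigma)(3\sigma - 1 - 2\alpha)}{(2\sigma - 1 - \alpha)(\sigma - \alpha)}$; undoing the dyadic reduction in $\gamma$ then gives the lemma.

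\emph{The main obstacle.} It is this last optimisation: one must follow how the three coupled parameters $X, Y, Z$ propagate through the detection inequality and the large-values bound so that the best choice of $X$ reproduces exactly the exponent $E$, and one must account for every logarithmic loss — in the thinning, the dyadic splits, the completion lemma, and the large-values inequality — in order to land on the clean powers $\log^5 T$ and $\log^8 T$. The analytic inputs (the Mellin identity and contour shift, the mean-value and Hal\'asz--Montgomery inequalities, the completion lemma) are all standard; the care is in keeping every estimate uniform in $\rho$ — in particular treating the dependence on $\beta$ by partial summation rather than the lossy bound $n^{-\beta} \le n^{-\sigma}$ — and in organising the parameter balance so that the stated exponent emerges.
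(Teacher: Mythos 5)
The paper does not prove this lemma; it is quoted verbatim as Montgomery's Theorem~12.3 from \cite{montgomery_topics_1971} and used as a black box in the proofs of Corollaries~\ref{zero_density_bound_thm1} and~\ref{zero_density_bound_thm}. So the only question is whether your reconstruction is a correct proof of Montgomery's theorem, and there is a genuine gap.

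Your overall framework (mollifier $M_X(s)$, Mellin/contour detection, dyadic decomposition, partial summation to strip the smooth weight, then a large-values inequality) has the right shape and the detection step is essentially correct. The problem is the large-values tool. The Hal\'asz--Montgomery--Huxley bound you feed the data into, $R' \ll (GV^{-2} + G^3 T N^{-1}V^{-6})\log^{O(1)}T$, is a $\zeta$-free inequality for arbitrary Dirichlet polynomials, and it carries an explicit factor of $T$. With $G \ll N(\log T)^{O(1)}$ and $V \gg N^{\sigma}(\log T)^{-O(1)}$ it gives $R' \ll (N^{1-2\sigma} + T N^{2-6\sigma})(\log T)^{O(1)}$, and after taking the worst block $N \in (X, Z]$ the $T$-term survives no matter how you choose $X$, because $T$ never enters the detection-side quantities $X$, $Y$, $Z$ and therefore cannot be traded against $M$. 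You would land on a hybrid bound in $T$ and $M$, not the pure $(M(\alpha,8T)\log^5 T)^{E}\log^8 T$ the lemma asserts. A sanity check makes this concrete: the stated exponent forces $N(\sigma,T)\ll (\log T)^{O(1)}$ whenever $M \ll 1$, while your bound does not degenerate this way; and ``optimising over $X$'' cannot fix this since both terms in the Huxley bound decrease in $N$ and there is no upper constraint on $X$ to balance against.

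What Montgomery actually does is apply Hal\'asz's duality inequality directly and bound the off-diagonal kernel sums $\sum_{s}\bigl|\sum_{N < n \le 2N} n^{i(\gamma_r - \gamma_s)}\bigr|$ by pushing the Perron contour to $\Re s = \alpha$ and invoking $|\zeta(\alpha + it)| \le M(\alpha, 8T)$ there. This is the second place $M$ enters (the first being the detection), and it is precisely what replaces the explicit $T$ by $M$. One obtains $R V^2 \ll G\bigl(N\log T + R\,M N^{\alpha}(\log T)^{O(1)}\bigr)$, and the diagonal absorbs the off-diagonal term exactly when $N^{2\sigma - 1 - \alpha} \gg M(\log T)^{O(1)}$. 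This is where the hypothesis $\sigma \ge (1+\alpha)/2$ is genuinely needed: it makes $2\sigma - 1 - \alpha > 0$ so that such $N$ exist. Your attribution of this hypothesis to ``keeping the detection exponent $1/(\sigma - \alpha)$ finite and positive'' is wrong --- $\sigma > \alpha$ already takes care of that, and $(\alpha+1)/2$ is strictly larger than $\alpha$. Finally, even after switching to the conditional Hal\'asz step, a straightforward optimisation gives the exponent $4(1-\sigma)/(2\sigma-1-\alpha)$ (Montgomery's Theorem~12.2), and reaching the sharper $\frac{2(1-\sigma)(3\sigma - 1 - 2\alpha)}{(2\sigma - 1 - \alpha)(\sigma - \alpha)}$ of Theorem~12.3 requires a further refinement; but the essential gap in what you wrote is the choice of large-values tool, not the bookkeeping.
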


This allows us to easily translate bounds on $\mu(\sigma)$ into zero-density estimates close to $\sigma = 1$. However, we will work directly with exponent pairs to illustrate the underlying optimisation problem. If $(k, \ell)$ is an exponent pair, then by the approximate functional equation
\[
M(\ell - k) \ll_{\varepsilon} T^{k + \varepsilon}. 
\]
Therefore, we set $\alpha = \ell - k$ and consider (for each $\sigma$) the optimisation problem 
\begin{equation}\label{zero_den_optimisation_problem}
\min_{(k, \ell) \in H}\frac{k(3\sigma - 1 + 2k - 2\ell)}{(2\sigma - 1 + k - \ell)(\sigma + k - \ell)}\qquad \text{s.t.}\qquad \frac{1}{2} \le \ell - k \le 1,\, \sigma \ge \frac{\ell - k + 1}{2}.
\end{equation}
Although the solution $(k(\sigma), \ell(\sigma))$ varies smoothly with $\sigma$, we find numerically that the following choices are near-optimal: for $0 \le n \le 4$, we choose
\[
(k, \ell) = (k_{n + 4}, \ell_{n + 4}),\qquad (\sigma_n \le \sigma < \sigma_{n + 1})
\]
where $(k_m, \ell_m)$ is defined in \eqref{knln_defn}, and 
\begin{alignat*}{3}
    \sigma_0 &= 9/10,\qquad  &&\sigma_1 = 0.9573\ldots,\qquad &&\sigma_2 = 0.9621\ldots,\\
    \sigma_3 &= 0.9644\ldots,\qquad &&\sigma_4 = 0.9669\ldots,\qquad  &&\sigma_5 = 1.
\end{alignat*}
Theorem \ref{zero_density_bound_thm1} follows from substituting the values of $(k, \ell)$ for each range of $\sigma$ into $\alpha = \ell - k$ and using Lemma \ref{monty_lemma_12_3}. For instance, in the case $n = 0$ we take, upon ignoring $\varepsilon$'s for ease of presentation, $(k, \ell) = (k_4, \ell_4) = (715/10238, 7955/10238)$ and hence
\[
f(\sigma) \le \frac{715(15357 \sigma - 12359)(1 - \sigma)}{(5119 \sigma - 3620)(10238 \sigma - 8739)},\qquad \frac{9}{10} \le \sigma < \sigma_1. 
\]
The value of $\sigma_n$ for $n \ge 1$ is the ``crossover" point between the bounds on $f(\sigma)$ arising from the exponent pairs $(k_{n + 4}, \ell_{n + 4})$ and $(k_{n + 5}, \ell_{n + 5})$ respectively. For instance, $\sigma = \sigma_1 = 0.9573\ldots$ solves
\[
\frac{715(15357 \sigma - 12359)(1 - \sigma)}{(5119 \sigma - 3620)(10238 \sigma - 8739)} = \frac{75872(103692 \sigma - 86773)(1 - \sigma)}{5(69128\sigma - 52209) (138256\sigma - 121337)}.
\]
\begin{remark}
It is possible to show a slightly stronger result in small ranges of $\sigma$. Instead of choosing $(k, \ell)$ from the vertices of $H$, we consider all exponent pairs along the boundary of $H$. This gives an improvement for values of $\sigma$ near $\sigma_n$. The resulting bounds on $f(\sigma)$ are unwieldy expressions so we will instead provide a numerical example. For $\sigma = 45/47$ (chosen to be close to $\sigma_1 = 0.9573\ldots$), we take 
\[
(k, \ell) = \left(\lambda k_4 + (1 - \lambda) k_5, \lambda \ell_4 + (1 - \lambda)\ell_5\right)
\]
in \eqref{zero_den_optimisation_problem} with $\lambda$ chosen optimally as
\[
\lambda = \frac{353866232 \sqrt{2674870481950895} - 15254503135395337}{9032663480578423} = 0.3373\ldots \in [0, 1]. 
\]
This gives $A(45/47) \le 1.2303$, which improves on Corollary \ref{zero_density_bound_thm1}.
\end{remark}
\subsection{Proof of Corollary \ref{zero_density_bound_thm}}
Taking $\alpha = k\sigma - (k - 1)$ for some $k > 1$ in Lemma \ref{monty_lemma_12_3}, and using Theorem \ref{zeta_bound_thm} to estimate $M(\alpha, 8T)$, we obtain, for $1 - 1/(2k) \le \sigma \le 1$,
\[
N(\sigma, T) \ll_{\varepsilon} T^{c(1 - \sigma)^{3/2} + \varepsilon},\qquad c = \frac{k^{3/2}(4k - 6)}{k^2 - 3k + 2}B,
\]
where $B = 2\sqrt{10}/13$ as in Theorem \ref{zeta_bound_thm}. Note that $c$ is minimised by the choice $k = 4.928408\ldots$, which gives $c \le 6.3453$, as required. 

\subsection{Proof of Theorem \ref{divisor_problem_thm}}
Let $m(\sigma)$ be as defined in \eqref{m_sigma_defn}. In the standard treatment (see e.g.\ Ivi\'{c} \cite[\S 13.3]{ivic_riemann_2003}), if $m(\sigma) \ge n$ then $\Delta_n(x) \ll_{\varepsilon} x^{\sigma + \varepsilon}$. Thus the problem reduces to estimates of $m(\sigma)$, which in turn depend on a certain large values estimate of $\zeta(\sigma + it)$. To this end, let $0 < V \le T$ and $t_r$ ($1 \le r \le R)$ be a set of points satisfying
\[
|\zeta(\sigma + it)| \ge V,\qquad |t_r| \le T,\qquad (1 \le r \le R),
\]
\[
|t_r - t_s| \ge \log^4T,\qquad (1 \le r \ne s \le R)
\]
and furthermore suppose that $\mu(\theta) \le c(\theta)$ for $1/2 \le \theta \le 1$, where $c(\theta)$ is the piecewise-defined function in Theorem \ref{mu_est_thm}. 
Following the argument in \cite[Lem.\ 8.2]{ivic_riemann_2003}, let $\theta = \theta(\sigma)$ be implicitly defined by 
\[
2c(\theta) + 1 + \theta - 2(1 + c(\theta))\sigma = 0.
\]
Suppose that for a particular value of $\sigma$, $\theta(\sigma) \in [\sigma_1, \sigma_2]$ and that 
\[
c(\theta) = A + B\theta,\qquad (\sigma_1 \le \theta \le \sigma_2),
\]
so that 
\[
\theta = \frac{2\sigma - 1 - 2A(1 - \sigma)}{2B(1 - \sigma) + 1}.
\]
Furthermore, let 
\[
f(\theta) := \frac{2(1 + c(\theta))}{c(\theta)} = \frac{2(A + B + 1)}{A + B(2\sigma - 1)}
\]
then, following the argument leading up to Ivi\'{c} \cite[Eqn.\ (8.97)]{ivic_riemann_2003}, for any exponent pair $(k, \ell)$ we have   
\begin{equation}\label{m_bound_R_bound}
\begin{split}
R &\ll T^{\varepsilon}(TV^{-2f(\sigma)} + T^{(4 - 4\sigma)/(1 + 2\sigma)}V^{-12/(1 + 2\sigma)} \\
&\qquad + T^{4(1 - \sigma)(k + \ell)/((2 + 4\ell)\sigma - 1 + 2k - 2\ell)}V^{-4(1 + 2k + 2\ell)/((2 + 4\ell)\sigma - 1 + 2k - 2\ell)})
\end{split}
\end{equation}
for $\theta(\sigma_1) \le \sigma \le \theta(\sigma_2)$. For each integer $n$, we seek to find the smallest $\sigma = \sigma(n)$ for which $R \ll_{\varepsilon} T^{1 + \varepsilon}V^{-n}$, since this implies $m(\sigma) \ge n$ and $\Delta_n(x) \ll_{\varepsilon} x^{\sigma + \varepsilon}$. Assuming $\sigma_1 \le \sigma \le \sigma_2$, we use $V \ll T^{c(\sigma)}$ to compute
\begin{equation}\label{A1_estimate}
T^{4( - \sigma)/(1 + 2\sigma)}V^{-12/(1 + 2\sigma)} \ll TV^{-A_1},\qquad A_1(\sigma) = \frac{1}{1 + 2\sigma}\left(12 + \frac{3(2\sigma - 1)}{c(\sigma)}\right)
\end{equation}
and also 
\[
T^{4(1 - \sigma)(k + \ell)/((2 + 4\ell)\sigma - 1 + 2k - 2\ell)}V^{-4(1 + 2k + 2\ell)/((2 + 4\ell)\sigma - 1 + 2k - 2\ell)} \ll TV^{-A_2},
\]
\begin{equation}\label{A2_estimate}
A_2(\sigma) = \frac{4c(\sigma)(1 + 2k + 2\ell) + 2\sigma(1 + 2k + 4\ell)  - 1 - 2k - 6\ell }{c(\sigma)(2 k + (2\ell + 1) (2\sigma - 1))}.
\end{equation}
Therefore we consider the optimisation problem (for fixed $n$)
\[
\min_{1/2 \le \sigma \le 1}\sigma,\qquad \text{s.t.}\qquad (k, \ell) \in H,\; A_1(\sigma) \ge n, \; A_2(\sigma) \ge n,
\]
or, equivalently, the dual problem (for fixed $1/2 \le \sigma \le 1$)
\[
\max_{(k, \ell)\in H}\min\left\{A_1(\sigma), A_2(\sigma)\right\}.
\]
We use the values of $c(\sigma)$ from Theorem \ref{mu_est_thm} so the optimisation problem is well-defined. In the range $0.646 \le \sigma \le 0.794$, we numerically compute the solution as
\begin{equation}\label{divisor_problem_exp_pairs}
(k, \ell) = \begin{cases}
(k_{-4}, \ell_{-4}),& 0.646 \le \sigma \le 0.722,\\
(k_{-3}, \ell_{-3}),& 0.723 \le \sigma \le 0.765,\\
(k_{-2}, \ell_{-2}),& 0.766 \le \sigma \le 0.794,
\end{cases}
\end{equation}
where $(k_n, \ell_n)$ are defined in \eqref{knln_defn}. The range of $\sigma$ was chosen to obtain estimates for $\Delta_n(x)$ for $9 \le n \le 20$; estimates for larger $n$ can be obtained by extending the range for $\sigma$. Substituting \eqref{divisor_problem_exp_pairs} into \eqref{A1_estimate} and \eqref{A2_estimate}, and taking $c(\sigma)$ from Theorem \ref{mu_est_thm}, we obtain 
\begin{equation}\label{m_sigma_lower_bound}
m(\sigma)\geq\begin{cases}
	 \frac{8 (453710742 - 1311814001 \sigma)}{(21906 \sigma - 8117) (251324 \sigma - 220633)} ,& 0.646 \leq\sigma\leq \frac{521}{796} ,\\
	 \frac{23850077 - 66940702 \sigma}{(1508 \sigma - 1333) (21906 \sigma - 8117)} ,& \frac{521}{796} \leq\sigma\leq \frac{53141}{76066} ,\\
	 \frac{2 (4130567 - 11066434 \sigma)}{(454 \sigma - 405) (21906 \sigma - 8117)} ,& \frac{53141}{76066} \leq\sigma\leq \frac{3620}{5119} ,\\
	 \frac{6 (268525549815 - 626275790894 \sigma)}{(21906 \sigma - 8117) (52938216 \sigma - 49318855)} ,& \frac{3620}{5119} \leq\sigma\leq 0.722 ,\\
	 \frac{30 (200973859502 - 466361285421 \sigma)}{(81624 \sigma - 30479) (52938216 \sigma - 49318855)} ,& 0.723 \leq\sigma\leq \frac{52209}{69128} ,\\
	 \frac{10 (6283940958 - 14261159585 \sigma)}{(81624 \sigma - 30479) (502648 \sigma - 471957)} ,& \frac{52209}{69128} \leq\sigma\leq 0.765 ,\\
	 \frac{2 (681633153 - 1510627522 \sigma)}{(1736 \sigma - 673) (502648 \sigma - 471957)} ,& 0.766 \leq\sigma\leq 0.794
\end{cases}
\end{equation}
Estimates for $\Delta_n(x)$ can then be found by inverting these relations. For instance, inverting the first case of \eqref{m_sigma_lower_bound} gives (with the aid of the symbolic algebra package \texttt{SymPy} \cite{sympy_2017})
\[
\alpha_n \le \frac{3436591703 n - 5247256004 + \sqrt{D}}{5505503544 n},\qquad 8.957\le n \le \frac{413385287}{44567046},
\]
where
\begin{align*}
D &= 1950477021421092025\, n^{2} - 16082104109471712440\, n \\
&\qquad + 27533695571514048016.
\end{align*}
This allows us to compute $\alpha_9 \le 0.6472$. 

\subsection{Proof of Theorem \ref{pythagorean_triple_thm}}
Following the argument of \cite{menzer_number_1986}, we have that 
\[
R(N) \ll_{\varepsilon} N^{\varepsilon}(N^{1/2}B^{-3/2} + N^{1/3}B^{-5/6} + N^{(k + \ell - 1/2)/2}B^{-2(k + \ell - 1)}).
\]
Balancing the first and last terms, we choose 
\[
B = N^{(k + \ell - 3/2)/(4(k + \ell) - 7)}
\]
to obtain $R(N) \ll_{\varepsilon} N^{\theta + \varepsilon}$, where
\[
\theta(k, \ell) = \max\left\{\frac{1}{3} - \frac{5}{6}\frac{k + \ell - 3/2}{4(k + \ell) - 7}, \frac{1}{2} - \frac{3}{2}\frac{k + \ell - 3/2}{4(k + \ell) - 7}\right\},
\]
Both terms are increasing in $k + \ell$, so the solution of 
\[
\min_{(k, \ell)\in H}\theta(k, \ell)
\]
is given by $(k, \ell) = (k_0, \ell_0) = (13/84 + \varepsilon, 55/84 + \varepsilon)$. This gives
\[
R(N) \ll_{\varepsilon} N^{71 / 316 + \varepsilon},\qquad \frac{71}{316} = 0.22468\ldots,
\]
as required. 

\section{Conclusion and future work}
As a concluding remark we speculate how some possible additions to the set of known exponent pairs will affect the convex hull $H$. Further refinements to the Bombieri--Iwaniec method, useful for bounding \eqref{exp_pair_defn} for $\log N / \log (yN^{-\sigma})$ close to $1/2$, can possibly generate better exponent pairs of the type \eqref{BI_exp_pair} which lie on the line of symmetry $\ell = k + 1/2$. By lowering the value of $\theta$, the hull is expanded inwards towards $(0, 1/2)$, a point which, if obtained, represents the ultimate achievement in this regard (and proves the exponent pair conjecture). 

On the other extreme, refinements to the $m$th derivative test, for large $m$, has the effect of widening the hull close to the points $(0, 1)$ and $(1/2, 1/2)$, so that the boundary of $H$ gets closer to the coordinate axes $\ell$ and $k$ respectively. Improvements in this result lead to progress in results such as Theorem \ref{M_bound_12_thm}, Theorem \ref{zeta_bound_thm} and Corollary \ref{zero_density_bound_thm}. 

An interesting intermediate case are the $m$th derivative tests for small $m$. Further refinements of these methods lead to new exponent pairs along the lines 
\[
\ell = 1 - (m - 1)k.
\]
In the case $m = 4$, a notable hypothetical exponent pair is $(1/12 + \varepsilon, 3/4 + \varepsilon)$. So far, a number of results have been established that are of the same strength over certain ranges. For instance, \cite[Thm.~1]{robert_fourth_2016} implies there exists $P$, $c > 0$ such that, for $f \in \textbf{F}(N, P, \sigma, y, c)$,   
\[
\sum_{N < n \le 2N}e(f(n)) \ll_{\varepsilon} \left(\frac{y}{N^\sigma}\right)^{1/12}N^{3/4 + \varepsilon} + N^{11/12 + \varepsilon},\qquad (yN^{-\sigma} \gg N^{4/3})
\]
which for $yN^{-\sigma} \gg N^2$ implies the same bound as a hypothetical $(1/12, 3/4 + \varepsilon)$ exponent pair. This particular exponent pair also represents the limit of certain methods. For instance, it follows from the work of Sargos \cite[Thm.\ 7.1]{sargos_points_1995} that if $(k, \ell)$ is an exponent pair, then 
\[
\sum_{N < n \le 2N}e(f(n)) \ll_{\varepsilon} \left(\frac{y}{N^{\sigma}}\right)^{k_1}N^{\ell_1} + \left(\frac{y}{N^\sigma}\right)^{1/12 + \varepsilon}N^{3/4 + \varepsilon}
\]
with
\[
k_1 = \frac{5k + \ell + 2}{8(5k + 3\ell + 2)},\qquad \ell_1 = \frac{29k + 21\ell + 10}{8(5k + 3\ell + 2)}.
\]
This represents a new process for obtaining novel exponent pairs, up to $(1/12 + \varepsilon, 3/4 + \varepsilon)$.
\section*{Acknowledgements}
We thank T. Oliveira e Silva for discussions on an earlier version of this work. Many thanks to G. Debruyne and T. Tao for spotting some errors in our preprint. Additionally, we thank O.~Bordell\`{e}s, D. R.~Heath-Brown, M.~Huxley, B.~Kerr, O.~Ramar\'{e}, I.~Shparlinski, N.~Watt, A.~Weingartner and T.~Wooley for their kind feedback upon the first preprint of this article. Last but not least we would like to express our gratitude to the anonymous referee for multiple helpful suggestions. 

\section*{Program 1}\label{verify_program}
\lstdefinestyle{mystyle}{
    numberstyle=\tiny\color{gray},
    basicstyle=\ttfamily\footnotesize,
    breakatwhitespace=false,         
    breaklines=true,                 
    captionpos=b,                    
    keepspaces=true,                 
    numbers=left,                    
    numbersep=5pt,                  
    showspaces=false,                
    showstringspaces=false,
    showtabs=false,                  
    tabsize=2
}
\lstset{style=mystyle}

\begin{lstlisting}[language=Python]
from fractions import Fraction as F
from math import log

# The van der Corput A transform
def A(k, l):
    return (k / (2 * k + 2), (k + l + 1) / (2 * k + 2))

# The van der Corput B transform
def B(k, l):
    return (l - F(1, 2), k + F(1, 2))

def C(k, l):
    return (k / (12 * (1 + 4 * k)), l / (12 * (1 + 4 * k)) + F(11, 12))
    
# returns (k_n, \ell_n)
def point(n):
    if n < 0:
        (k, l) = point(-n)
        return B(k, l)
    if n == 0: 
        return (F(13, 84), F(55, 84))
    elif n == 1:
        return (F(4742, 38463), F(35731, 51284))
    elif n == 2:
        return (F(18, 199), F(593, 796))
    elif n == 3:
        return (F(2779, 38033), F(58699, 76066))
    elif n == 4:
        return (F(715, 10238), F(7955, 10238))
    elif n == 5: 
        return A(F(4742, 38463), F(35731, 51284))
    elif n == 6: 
        return A(F(18, 199), F(593, 796))
    elif n == 7: 
        return A(F(2779, 38033), F(58699, 76066))
    elif n == 8: 
        return A(F(715, 10238), F(7955, 10238))
    
    m = n - 4
    return (F(2, (m - 1) * (m - 1) * (m + 2)), 1 - F(3 * m - 2, m * (m - 1) * (m + 2)))
    
# returns whether point p = (k, l) lies in $H_N$
def in_hull(p, N):
    (k, l) = p
    
    # check if p lies above line joining p_n, p_{n + 1}
    for n in range(-N, N):
        (k1, l1) = point(n)
        (k2, l2) = point(n + 1)
        if k * (l2 - l1) + l * (k1 - k2) < k1 * l2 - l1 * k2:
            return False
    
    (kN, lN) = point(N)
    if k * (1 - lN) + l * (kN - 0) < kN * 1 - lN * 0:
        return False
    
    (k_N, l_N) = point(-N)
    if k * (l_N - F(1, 2)) + l * (F(1, 2) - k_N) < F(1, 2) * l_N - F(1, 2) * k_N:
        return False
    
    if k + l > 1:
        return False
    
    return True

# verify Lemma 3.6
def verify_lemma_3_6():
    N = 300
    
    # check exponent pairs of the form (1.3)
    thetas = [F(9, 56), F(89, 560), F(89, 570), F(32, 205), F(13, 84)]
    for th in thetas:
        assert in_hull((th, th + F(1, 2)), N)
    
    # check exponent pairs of the form (1.4)
    assert in_hull((F(2, 13), F(35, 52)), N)
    assert in_hull((F(516247, 6629696), F(5080955, 6629696)), N)
    assert in_hull((F(6299, 43860), F(29507, 43860)), N)
    assert in_hull((F(771, 8116), F(1499, 2029)), N)
    assert in_hull((F(21, 232), F(173, 232)), N)
    assert in_hull((F(1959, 21656), F(16135, 21656)), N)
    
    # check exponent pairs of the form (1.6)
    mvartheta = [(4, F(1, 13)),
                 (8, F(1, 204)), 
                 (9, F(7, 2640)), (9, F(1, 360)),
                 (10, F(1, 716)), (10, F(1, 649)), (10, F(7, 4540)), (10, F(1, 615)),
                 (11, F(1, 915))]
    for (m, vartheta) in mvartheta:
        assert in_hull((vartheta, 1 - (m - 1) * vartheta), N)
    
    # check exponent pairs of the form (1.5) (m \leq 100)
    for m in range(101):
        assert in_hull((F(169, 1424 * (2 ** m) - 338), 1 - F(169, 1424 * (2 ** m) - 338) * F(712 * m + 1577, 712)), N)
    
    # check exponent pairs of the form (1.7) (m \leq 100)
    for m in range(3, 101):
        assert in_hull((1 / (25 * m * m * (m - 1) * log(m)), 1 - 1 / (25 * m * m * log(m))), N)

    # check exponent pairs of the form (1.8) (m \leq 100)
    for m in range(3, 101):
        assert in_hull((F(2, (m + 2) * (m - 1) ** 2), 1 - F(3 * m - 2, m * (m - 1) * (m + 2))), N)
        

# verify Lemma 4.3
def verify_lemma_4_3():
    # check that A(k_m, \ell_m) \in H_{1000} for |m| < 100
    for m in range(-99, 100):
        (k_m, l_m) = point(m)
        assert in_hull(A(k_m, l_m), 1000)
        assert in_hull(C(k_m, l_m), 1000)
        

verify_lemma_3_6()
verify_lemma_4_3()
\end{lstlisting}

\newpage
\printbibliography
\end{document}